\def \iS{\int_{\mathbb{S}^1}}
\numberwithin{equation}{section}
\theoremstyle{plain} 
\newtheorem{theorem}{Theorem}[section]
\newtheorem{proposition}[theorem]{Proposition}
\newtheorem{lemma}[theorem]{Lemma}
\newtheorem{corollary}[theorem]{Corollary}
\theoremstyle{definition}
\newtheorem{definition}[theorem]{Definition}
\newtheorem*{notation}{\textbf{Notation}}
\newtheorem{remark}[theorem]{Remark}
\title[Ancient solution to $\alpha$-CSF]{Classification of ancient flows by sub-affine-critical powers of curvature in $\mathbb{R}^2$}
\author{Kyeongsu Choi}
\address{School of Mathematics, Korea Institute for Advanced Study, 85 Hoegiro, Dongdaemun-gu, Seoul 02455, Republic of Korea.}
\email{choiks@kias.re.kr}
\author{Liming Sun}
\address{Academy of Mathematics and Systems Science, the Chinese Academy of Sciences, Beijing 100190, China.}
\email{lmsun@amss.ac.cn}
\date{\today}
\subjclass[2010]{Primary 53C44, 53A04; Secondary 35K55}
\keywords{}
\begin{document}

\maketitle
\begin{abstract}
We classify closed convex ancient $\alpha$-curve shortening flows for sub-affine-critical powers $\alpha \leq \frac{1}{3}$. In addition, we show that closed convex smooth finite entropy ancient $\alpha$-curve shortening flows with $\frac{1}{3}<\alpha$ are shrinking circles.  After rescaling, the ancient flows satisfying the above conditions converge exponentially fast to smooth closed convex shrinkers as the time goes to negative infinity. In particular, when $\alpha=\frac{1}{k^2-1}$ with $3\leq k \in \mathbb{N}$, the round circle shrinker has non-trivial Jacobi fields, but the ancient flows asymptotic to shrinking circles do not evolve along the Jacobi fields.
\end{abstract}

\tableofcontents

\setlength{\parskip}{0.5em}
\section{Introduction}

We call a family of complete convex embedded curves $\Gamma_t \subset \mathbb{R}^2$ the $\alpha$-curve shortening flow ($\alpha$-CSF) if the position vector $\mathbf{X}(\cdot,t)$ of $\Gamma_t$ satisfies 
\begin{align}\label{eq:main-flow}
    \tfrac{\partial }{\partial t}\mathbf{X}(p,t)=\kappa^{\alpha}(p,t)\mathbf{N}(p,t),
\end{align}
where $\kappa$ is the curvature and $\mathbf{N}$ is the inward pointing unit normal vector of $\Gamma_t$. In particular, if a flow $\Gamma_t$ exists for $t\in (-\infty,T)$ for some $T\in \mathbb{R}\cup \{+\infty\}$, then $\Gamma_t$ is an ancient flow.

Among the $\alpha$-curve shortening flows, the $\frac{1}{3}$-CSF is known as the affine normal flow, because the flow remains as a solution to \eqref{eq:main-flow} with $\alpha=\frac{1}{3}$ under volume-preserving affine transformations of $\mathbb{R}^2$. This affine-critical case $\alpha=\frac{1}{3}$ indeed plays a crucial role in studying ancient flows and singularities of the $\alpha$-CSFs, because their asymptotic behaviors dramatically change at the critical case. 

In the super-affine-critical case $\alpha >\frac{1}{3}$, Daskalopoulos-Hamilton-Sesum  showed  in  \cite{daskalopoulos2010} that a convex closed ancient CSF ($\alpha=1$) must be a shrinking circle or an Angenent oval. This result was extended for $\alpha \in (\frac{2}{3},1)$ in \cite{bourni2020ancient} by Bourni-Clutterbuck-Nguyen-Stancu-Wei-Wheeler. 

In the affine-critical case $\alpha =\frac{1}{3}$, Chen \cite{chen2015classifying} showed that an ancient closed convex affine normal flow must be a shrinking ellipse.  See an alternate proof by \citet{ivaki2016classification}. See also the higher dimensional analogue by Loftin and Tsui in \cite{loftin2008ancient}.  

Therefore, if a convex closed ancient flow with $\alpha=\frac{1}{3}$ or $\alpha \in (\frac{2}{3},1]$ converges to a closed shrinker after rescaling as $t\to -\infty$, then it is a self-shrinking flow. 

On the other hand,  the authors recently \cite{choisun} discovered infinitely many non-homothetic convex closed ancient flows with sub-affine-critical powers $\alpha <\frac{1}{3}$, which converge to closed shrinkers  as $t\to -\infty$  after rescaling.

\begin{theorem}[Choi-Sun \cite{choisun}]\label{thm:existence}
Given $\frac{1}{k^2-1}\leq \alpha <\frac{1}{(k-1)^2-1}$ with $3 \leq k\in \mathbb{N}$, there exist, up to rigid motions and dilation, a $(2k-5)$-parameter family of closed convex ancient $\alpha$-curve shortening flows converging to shrinking circles as $t\to -\infty$ and  a $(2m-4)$-parameter family of  closed convex ancient $\alpha$-curve shortening flows converging to shrinking $m$-fold symmetric flows\footnote{See Remark \ref{rmk:k_fold_sym} for the definition of the $m$-fold symmetry, and see Andrews \cite{andrews2003classification} for the classification of smooth strictly convex closed shrinkers to the $\alpha$-curve shortening flows for all $\alpha>0$. Readers are also encouraged to take a look at  the illustrations in the authors' previous paper \cite{choisun}.} as $t\to -\infty$, for each $m\in [3,k)\cap \mathbb{N}$.
\end{theorem}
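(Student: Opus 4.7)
My plan is to construct each family by a Lyapunov--Perron fixed-point argument attached to the chosen shrinker, with the parameter count emerging from the dimension of the unstable spectrum of the linearized normalized flow modulo the continuous symmetries of the problem.

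First, I would rewrite \eqref{eq:main-flow} in terms of the support function $s(\theta,t)$ of $\Gamma_t$, so that the flow becomes $s_t=-(s+s_{\theta\theta})^{-\alpha}$. A self-similar rescaling of the form $s(\theta,t)=(-t)^{1/(\alpha+1)}v(\theta,\tau)$ with $\tau=-\frac{1}{\alpha+1}\log(-t)$ turns each homothetic shrinker into a stationary solution $\bar v$ of the associated normalized equation $v_\tau=F[v]$ on $\mathbb{S}^1$. Next, I would linearize $F$ at $\bar v$ and analyze the spectrum of $L:=DF[\bar v]$. For the round circle, $L$ is diagonalized in Fourier modes and has eigenvalues that are explicit affine functions of $\alpha(1-n^2)$; in the range $\alpha\in[\tfrac{1}{k^2-1},\tfrac{1}{(k-1)^2-1})$ the unstable modes are exactly $n=0,1,\dots,k-1$, contributing $2k-1$ real dimensions. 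For the $m$-fold symmetric shrinkers classified by Andrews \cite{andrews2003classification} I would treat $L$ as a Sturm--Liouville operator on $\mathbb{S}^1$, exploit the explicit zero eigenfunctions coming from $\partial_\theta\bar v$ and the two translation generators, and use an $m$-fold symmetry reduction together with an oscillation-comparison argument to count the unstable directions.

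Then, using analytic semigroup theory for the spectral decomposition of $L$ and a contraction argument in a backward-in-time Banach space carrying an exponential weight $e^{\mu\tau}$ with $\mu$ in the spectral gap, I would assign to each vector in the unstable subspace a unique ancient solution of $v_\tau=F[v]$ that converges exponentially to $\bar v$ as $\tau\to-\infty$. Quotienting by the continuous symmetries---time translation, the two spatial translations, and (for the circle only) ambient rotation---yields the stated $(2k-5)$- and $(2m-4)$-parameter families. Parabolic regularity together with smallness of the unstable data then guarantees the resulting curves are smooth and strictly convex for all $\tau\ll 0$.

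The hard part will be the degenerate case $\alpha=\tfrac{1}{k^2-1}$, where the frequency-$k$ Fourier mode on the round circle becomes a genuine Jacobi field and the unstable spectrum is no longer separated from the center by a positive gap, so a naive Lyapunov--Perron contraction fails. To proceed I would Taylor expand $F$ at $v\equiv 1$ to second and third order, extract the explicit resonance coefficients between the zero eigenvector and the strictly unstable modes, and show that they are non-degenerate enough to force any candidate solution with non-trivial center component either to feed into the unstable subspace or to decay---ruling out any polynomial-in-$\tau$ drift along the Jacobi field. This same quantitative non-degeneracy underlies the final assertion of the abstract that the constructed ancient flows do not in fact evolve along the Jacobi fields.
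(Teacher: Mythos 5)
Your core strategy---rewrite in terms of the support function, rescale to make the shrinker a stationary solution, compute the unstable spectrum of the linearized operator, build the ancient flows by a Lyapunov--Perron contraction against the unstable manifold, then quotient by continuous symmetries---is essentially the approach used in \cite{choisun} (the paper only cites that construction via its Theorems 3.2 and 3.5 and the spectral result recalled here as Theorem \ref{thm:sep-L}). Your bookkeeping of the quotient is also correct: for the circle all four continuous symmetries (time, two space translations, rotation) contribute unstable or neutral directions and are modded out to give $(2k-1)-4=2k-5$; for the $m$-fold shrinker rotation generates the one-dimensional kernel $\mathrm{span}\{h_\theta\}$ rather than an unstable direction, so one quotients only by time and space translations to get $(2m-1)-3=2m-4$.

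However, your final paragraph is built on a false premise. At $\alpha=\tfrac{1}{k^2-1}$ the eigenvalues of $-\mathcal{L}_{\bar\Gamma_\alpha^c}$ are $\lambda_{2l}=\lambda_{2l+1}=\alpha(l^2-1)-1=\frac{l^2-k^2}{k^2-1}$, so the slowest unstable mode is $l=k-1$ with eigenvalue $\frac{-(2k-1)}{k^2-1}$, and the kernel mode is $l=k$ with eigenvalue $0$. The gap between the unstable spectrum and $0$ is therefore $\frac{2k-1}{k^2-1}>0$, not zero: the unstable spectrum \emph{is} separated from the center, and a standard Lyapunov--Perron iteration in a backward-weighted space with weight $e^{\mu\tau}$ for any $\mu\in\bigl(0,\tfrac{2k-1}{k^2-1}\bigr)$ converges without modification. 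What the degeneracy $\alpha=\tfrac{1}{k^2-1}$ actually introduces is a non-trivial kernel $\mathrm{span}\{\cos k\theta,\sin k\theta\}$, which threatens \emph{uniqueness/classification} (one must rule out ancient flows drifting polynomially along the Jacobi field), but not \emph{existence}. The Taylor-expansion and resonance-coefficient computation you sketch is precisely the content of the classification argument (compare Section~4.3 and Theorem~\ref{thm:non-exist2}, where the sign of the constant $C_*$ in \eqref{eq:CF} kills the neutral-mode-dominant scenario); it is not needed, and should not appear, in the proof of Theorem \ref{thm:existence}.
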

 
The main goal of this paper is to show that ancient flows in Theorem \ref{thm:existence} are the only convex closed ancient $\alpha$-CSFs in the sub-affine-critical case.

\begin{theorem}\label{thm:subaffine.classification}
A convex closed ancient $\alpha$-curve shortening flow with $\alpha< \frac{1}{3}$ must be one of the flows listed in Theorem \ref{thm:existence}.  
\end{theorem}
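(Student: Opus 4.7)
\medskip

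\noindent\textbf{Proof strategy.}
The case $\alpha = \tfrac{1}{3}$ is immediate from the already-stated Corollary \ref{cor:super-affine-rigidity}, so the real work is for $\alpha < \tfrac{1}{3}$. Given a convex closed ancient flow $\Gamma_t$, my plan is to pass to a suitably normalized flow $\widetilde{\Gamma}_\tau$ whose stationary states are the $\alpha$-shrinkers, and then to show that (i) $\widetilde{\Gamma}_\tau$ converges as $\tau \to -\infty$ to a shrinker $\Sigma_\infty$; (ii) the rate of convergence is exponential; and (iii) the leading asymptotic profile of the deviation from $\Sigma_\infty$ lies in a finite-dimensional unstable subspace whose dimension matches the parameter count in Theorem \ref{thm:existence}.

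For (i), I would first establish uniform $C^\infty$ bounds on $\widetilde{\Gamma}_\tau$ as $\tau \to -\infty$, using Andrews' entropy monotonicity together with curvature estimates tailored to the sub-affine-critical regime. Once uniform bounds are in place, subsequential Arzel\`a--Ascoli convergence produces a smooth closed convex shrinker limit $\Sigma_\infty$, and Andrews' classification of $\alpha$-shrinkers in $\mathbb{R}^2$ forces $\Sigma_\infty$ to be either a round circle or a strictly convex $m$-fold symmetric shrinker with $m \geq 3$. Full (not merely subsequential) convergence will follow once the linearized analysis below yields an exponential decay rate.

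For (ii) and (iii), I would parametrize $\widetilde{\Gamma}_\tau$ as a graph $f(\cdot,\tau)$ over $\Sigma_\infty$ via its support function, obtaining a nonlinear parabolic equation $\partial_\tau f = L_{\Sigma_\infty} f + Q(f)$ with $L_{\Sigma_\infty}$ a self-adjoint Schr\"odinger-type operator on $\mathbb{S}^1$ and $Q$ quadratic in $f$ and its derivatives. A spectral computation should give exactly $2k-5$ positive eigenvalues of $L_{\Sigma_\infty}$ at the circle whenever $\alpha \in [\tfrac{1}{k^2-1}, \tfrac{1}{(k-1)^2-1})$, and $2m-4$ positive eigenvalues at the $m$-fold symmetric shrinker, which exactly matches Theorem \ref{thm:existence}. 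A standard exponential-trichotomy argument then forces $f$ to decay exponentially backward in time and its leading profile to lie in the unstable subspace, after which a shooting/uniqueness argument identifies the resulting finite-dimensional parameter space with the one constructed in Theorem \ref{thm:existence}.

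The main obstacle is the borderline case $\alpha = \tfrac{1}{k^2-1}$, where $L_{\Sigma_\infty}$ acquires a non-trivial kernel of Jacobi fields that do not arise from ambient symmetries. Here the strict linear dichotomy breaks down, and one must rule out slow, Jacobi-directed drift of ancient solutions. My plan is to employ a Merle--Zaag alternative (or equivalently a center-manifold reduction): project $Q(f)$ onto the kernel, derive the reduced ODE for the slow coefficient, and show that its quadratic leading term is incompatible with a non-trivial backward-decaying solution. This kernel-projection analysis is the delicate step --- precisely the ``no evolution along Jacobi fields'' phenomenon highlighted in the abstract.
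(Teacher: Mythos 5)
Your overall architecture --- an entropy-driven backward convergence to a shrinker, linearization at the limit in the support-function parametrization, a Merle--Zaag-type dichotomy between unstable and neutral dominance, and a finite-dimensional parametrization matching Theorem~\ref{thm:existence} --- matches the paper's Sections~2--5 quite closely, including the observation that the case $\alpha=\tfrac13$ is immediate from Corollary~\ref{cor:super-affine-rigidity}. Two points deserve correction.

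A minor one first: the Morse index of $-\mathcal{L}$ at the circle shrinker is $2k-1$, not $2k-5$ (Theorem~\ref{thm:sep-L}). The family size $2k-5$ in Theorem~\ref{thm:existence} is the Morse index minus the four dimensions coming from time translation, the two spatial translations, and rotation; likewise $2m-1$ versus $2m-4$ at the $m$-fold shrinker.

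The substantive issue concerns the resonant circle case $\alpha=\tfrac{1}{k^2-1}$, which you correctly identify as the delicate step. You propose to project $Q(f)$ onto the kernel spanned by $\cos k\theta$, $\sin k\theta$ and show that the resulting quadratic term is incompatible with backward decay. As stated, this would fail, because the projection of the quadratic nonlinearity onto the kernel vanishes identically: if $v$ is a kernel element then $v_{\theta\theta}+v$ is again proportional to $\cos k\theta$, $\sin k\theta$, and its square has Fourier support only on the modes $0$ and $2k$, which is orthogonal to the kernel. Consequently the reduced equation for $(A_k,B_k)$ has no genuine quadratic term, and a naive projection gives $\dot A_k = o(\rho)$ with no sign information. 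The paper's Subsection~4.3 therefore goes one order deeper: the non-kernel modes $A_0$, $A_{2k}$, $B_{2k}$ are shown to be \emph{quadratically slaved} to $(A_k,B_k)$ (Lemma~\ref{lem:rho-nonint}, Lemma~\ref{lem:frho-2nd}), and only after substituting these slaving relations back and retaining the \emph{cubic} term of the Taylor expansion of $E(v)$ does one obtain a closed equation $\partial_\tau\rho = C_*\rho^2 + O(\rho^3)$ for $\rho = A_k^2+B_k^2$, with the decisive computation \eqref{eq:CF} giving $C_* = \tfrac16 k^2(4-k^2) < 0$. The sign of $C_*$ is not visible from any single quadratic projection --- it emerges only from the cancellation pattern between the three slaved modes and the cubic nonlinearity, and this multi-mode normal form is the technical heart of Section~4.3 and the first instance in the ancient-flow literature of a neutral projection onto more than one eigenfunction. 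Your plan is compatible in spirit, but the phrase ``quadratic leading term'' understates, and as written would lead to, a trivial $0 = o(\rho)$ rather than a contradiction.
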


 \bigskip
 
To prove Theorem \ref{thm:subaffine.classification}, we recall the entropy $\mathcal{E}_\alpha$ of the $\alpha$-CSF from Andrews-Guan-Ni \cite{andrews2016flow}. See Section 2 for its definition and more discussion. In the sub-critical case, it is important to use the fact in Proposition \ref{prop:ent-upper-bound} that a convex closed ancient $\alpha$-CSF $\Gamma_t$ with $\alpha\leq \frac{1}{3}$ has \textbf{finite entropy}, namely
\begin{equation}\label{eq:def-fin.ent}
\lim_{t\to -\infty}\mathcal{E}_{\alpha}(\Gamma_t)<\infty,
\end{equation}
which is not available for $\alpha >\frac{1}{3}$. In the super-critical case, we can establish an analogous theorem under the finite entropy condition.

\begin{theorem}\label{thm:superaffine.classification}
A convex closed smooth\footnote{By \cite{andrews1998evolving}, a weakly convex closed curve immediately becomes strictly convex and smooth under the $\alpha$-CSF with $\alpha\leq 1$. Hence, a convex closed ancient $\alpha$-CSF with $\alpha\leq 1$ is always strictly convex and smooth. However, for $\alpha>1$ there exists an ancient flow which is neither strictly convex nor smooth. See Figure \ref{fig:test2}. } finite entropy ancient $\alpha$-curve shortening flow with $ \alpha>\frac{1}{3}$ must be a shrinking circle.
\end{theorem}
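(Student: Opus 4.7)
The strategy is to use the monotonicity of the Andrews--Guan--Ni entropy $\mathcal{E}_\alpha$ together with Andrews' classification of closed convex shrinkers \cite{andrews2003classification}: for $\alpha > \frac{1}{3}$ the only smooth strictly convex closed shrinker is the round circle, whose (scale-invariant) entropy I denote $\mathcal{E}_{\mathrm{circ}}$. The plan is to show that the entropy of the rescaled flow is pinched between $\mathcal{E}_{\mathrm{circ}}$ from below and $\mathcal{E}_{\mathrm{circ}}$ from above, forcing equality and rigidity.

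First I would pass to the self-similar rescaling $\tilde{\Gamma}_\tau$ of $\Gamma_t$, with rescaled time $\tau = -\log(T-t)$, so that shrinkers become stationary under the normalized flow and $\mathcal{E}_\alpha(\tilde{\Gamma}_\tau)$ is a non-increasing function of $\tau$. The finite entropy hypothesis yields a finite limit $L_\infty := \lim_{\tau \to -\infty} \mathcal{E}_\alpha(\tilde{\Gamma}_\tau)$, and monotonicity gives $\mathcal{E}_\alpha(\tilde{\Gamma}_\tau) \leq L_\infty$ for all $\tau$. Next I would establish $C^\infty$-precompactness of $\{\tilde{\Gamma}_\tau\}_{\tau \leq 0}$: combining the entropy upper bound with the smoothness hypothesis and Andrews-type pinching and curvature estimates for the $\alpha$-CSF should yield uniform geometric bounds, so one can extract a sequence $\tau_j \to -\infty$ along which $\tilde{\Gamma}_{\tau_j}$ converges in $C^\infty$ to a smooth strictly convex limit $\tilde{\Gamma}_{-\infty}$. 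Because the total variation of $\mathcal{E}_\alpha$ on $(-\infty,0]$ is finite, the entropy drop across any fixed window $[\tau_j,\tau_j+s]$ tends to zero, so $\mathcal{E}_\alpha$ is constant along the limit flow. The rigidity in the monotonicity formula forces $\tilde{\Gamma}_{-\infty}$ to be a shrinker, and by Andrews' classification it must be a round circle, giving $L_\infty = \mathcal{E}_{\mathrm{circ}}$.

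For the global rigidity, I would use that in the super-affine-critical regime the circle is the unique minimizer of $\mathcal{E}_\alpha$ among smooth closed convex curves in the chosen normalization --- a fact which follows, e.g., from Andrews' convergence result for the forward normalized $\alpha$-CSF to the circle combined with the monotonicity of $\mathcal{E}_\alpha$. Hence $\mathcal{E}_\alpha(\tilde{\Gamma}_\tau) \geq \mathcal{E}_{\mathrm{circ}}$ for every $\tau$. Sandwiching with $\mathcal{E}_\alpha(\tilde{\Gamma}_\tau) \leq L_\infty = \mathcal{E}_{\mathrm{circ}}$ forces $\mathcal{E}_\alpha(\tilde{\Gamma}_\tau) \equiv \mathcal{E}_{\mathrm{circ}}$. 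By rigidity in the monotonicity formula, $\tilde{\Gamma}_\tau$ must itself be a shrinker for every $\tau$, hence by Andrews a round circle; undoing the rescaling, $\Gamma_t$ is a shrinking round circle.

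The principal obstacle is the backward compactness step, namely producing the smooth subsequential limit from only the finite entropy assumption. One must simultaneously prevent curvature blow-up and inradius collapse as $\tau_j \to -\infty$, upgrading the integral control furnished by $\mathcal{E}_\alpha$ to pointwise $C^k$ bounds. For $\frac{1}{3} < \alpha \leq 1$ this is essentially handled by Andrews' pinching together with standard parabolic regularity for uniformly parabolic degenerate equations, whereas for $\alpha > 1$ the explicit smoothness hypothesis in the statement is what enables uniform $C^\infty$ control and thus the extraction of a smooth limit.
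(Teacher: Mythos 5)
Your entropy-pinching skeleton is exactly the paper's argument for $\tfrac13<\alpha<1$ (Corollary \ref{cor:super-affine-rigidity} there): the entropy of the unit disc is $0$, $\mathcal{E}_\alpha\geq 0$ for $\alpha\geq\tfrac13$ by \cite{andrews2016flow}, the finite-entropy hypothesis plus Theorem \ref{thm:ent-limit} gives Hausdorff convergence to a normalized shrinker which by \cite{andrews2003classification} must be the circle, and then monotonicity squeezes $\mathcal{E}_\alpha(\hat\Omega_\tau)\equiv 0$ and rigidity of Lemma \ref{lem:entropy} finishes. So for $\tfrac13<\alpha<1$ you have reproduced the paper's proof.

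The genuine gap is in the case $\alpha>1$, precisely the step you flag as the "principal obstacle" and then dismiss. You claim that "the explicit smoothness hypothesis in the statement is what enables uniform $C^\infty$ control and thus the extraction of a smooth limit." That is not correct: the hypothesis says each time slice $\Gamma_t$ is smooth, but it does not provide $\tau$-uniform $C^\infty$ bounds on the rescaled flow as $\tau\to-\infty$. For $\alpha>1$ the equation \eqref{eq:un-sppt-flow} degenerates as $\kappa\to 0$, there is no Andrews-type pinching estimate, and the natural limit objects (translators) are only $C^1$ with flat sides, so the smoothing mechanism that gives $C^\infty$ precompactness when $\alpha\in(0,1)$ is absent; see the paper's footnote and the discussion around Figure \ref{fig:test2}. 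The paper therefore does \emph{not} prove $C^\infty$ subsequential convergence for $\alpha>1$. Instead it first extracts a uniform diameter bound from finite entropy (Proposition \ref{prop:diameter-bound}), then proves only $C^1$ subsequential convergence to a unit circle with a drifting center (Theorem \ref{thm:ent-subseq.limit}), and then upgrades this to full exponential convergence by adapting the Allard--Almgren iteration of Subsection 4.1, using the unstable eigenfunctions $\cos\theta,\sin\theta$ (center shifts) as the gauge freedom in place of the rotational kernel (Subsection 4.2). Without some substitute for that machinery your compactness step, and hence the whole $\alpha>1$ argument, does not close. A smaller remark: your appeal to Andrews' classification and to the entropy lower bound $\mathcal{E}_\alpha\geq 0$ should be double-checked in the range $\alpha>1$, since Theorem \ref{thm:class-ben} and Theorem \ref{thm:ent-limit} are both stated for $\alpha\in(0,1)$; the paper relies on \cite{andrews2003classification} and \cite{andrews2016flow} directly to cover $\alpha>1$, and you would need to cite those extensions explicitly rather than the restricted theorems.
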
 

 \bigskip

We remind that Theorem \ref{thm:subaffine.classification} says that a closed ancient flow with $\alpha\leq \frac{1}{3}$ must converge to a shrinker as $t\to -\infty$ after rescaling. In the super-affine-critical case $\alpha\geq \frac{1}{3}$, a closed convex $\alpha$-CSF must converge to a round circle after rescaling at its singularity by the results in \cite{andrews1996contraction,andrews1998evolving,andrews2003classification,gage1986heat,sapiro1994affine}. (See the analogous theorem in higher dimensions  \cite{andrews1996contraction,andrews1999gauss,andrews2016flow,brendle2017asymptotic,choi-daskalopoulos16,guan2017entropy}.) We can describe this property shortly as follows: \textit{closed convex ancient $\alpha$-CSFs with $\alpha \leq \frac{1}{3}$ are Type I ancient flows, while closed convex $\alpha$-CSFs with $\alpha \geq \frac{1}{3}$ develop Type I singularities.} For readers' convenience, we recall the definition of Type I singularities and ancient flows.
\bigskip

\begin{definition}\label{def:TypeII}
Given a closed $\alpha$-curve shortening flow $\partial \Omega_t$, we denote by $\mathcal{A}(\Omega_t)$ and $\mathcal{P}(\Omega_t)$ the area  and the perimeter of the convex set $\Omega_t\subset \mathbb{R}^2$, respectively.  We say that an $\alpha$-CSF $\partial \Omega_t$  develops a \textit{Type II singularity} at the singular time $t=T$ if 
\begin{equation}
\limsup_{t\to T}\mathcal{P}(\Omega_t)^2/\mathcal{A}(\Omega_t)=+\infty.
\end{equation} 
Otherwise, we say that it develops a \textit{Type I singularity}. Similarly, we call an ancient flow $\partial\Omega_t$ a Type II ancient flow if 
\begin{equation}
\limsup_{t\to -\infty}\mathcal{P}(\Omega_t)^2/\mathcal{A}(\Omega_t)=+\infty.
\end{equation}
Otherwise, it is a Type I ancient flow.
\end{definition}

We will show in Proposition \ref{prop:iso-peri} that the isoperimetric ratio $\mathcal{P}^2(\Omega)/\mathcal{A}(\Omega)$ is bounded from above by the entropy $\mathcal{E}_\alpha(\Omega)$ with $\alpha >\frac{1}{3}$. Hence, we can restate Theorem \ref{thm:superaffine.classification} as follows.

\begin{corollary}\label{cor:TypeI.classification}
A convex closed smooth Type I ancient $\alpha$-curve shortening flow with $\alpha>\frac{1}{3}$ must be a shrinking circle.
\end{corollary}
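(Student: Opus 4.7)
The plan is to deduce Corollary \ref{cor:TypeI.classification} directly from Theorem \ref{thm:superaffine.classification} by showing that the Type I hypothesis implies finite entropy, i.e.\ $\lim_{t\to -\infty}\mathcal{E}_\alpha(\Gamma_t)<\infty$. I would use two properties of the Andrews--Guan--Ni entropy $\mathcal{E}_\alpha$ recalled in Section~2: scale invariance on convex bodies and monotonicity along the $\alpha$-CSF.

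First, I rescale by enclosed area, setting $\tilde\Gamma_\tau := A(t)^{-1/2}\Gamma_t$, so that the normalized curve bounds a region $\tilde\Omega_\tau$ of fixed area $\pi$. Scale invariance gives $\mathcal{E}_\alpha(\Gamma_t)=\mathcal{E}_\alpha(\tilde\Gamma_\tau)$. The Type I hypothesis $A(t)/L(t)^2\geq c>0$ uniformly bounds the perimeter of $\tilde\Gamma_\tau$. A planar convex body with perimeter $L\leq L_0$ and area $A=\pi$ has diameter bounded above (since $L\geq 2\,\mathrm{diam}$) and minimum width bounded below by $2A/L\geq 2\pi/L_0$; by the classical inequality $\mathrm{inrad}\geq \mathrm{width}/3$ for planar convex bodies, the inradius is also uniformly bounded below. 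Thus, after suitable translation, the family $\{\tilde\Omega_\tau\}$ lies in a Hausdorff-precompact set of nondegenerate convex bodies.

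Second, I would show that $\mathcal{E}_\alpha$ is uniformly bounded on this family. Centering each $\tilde\Omega_\tau$ so that a uniformly-sized inscribed disk sits at the origin, the support function $S_0(\theta)$ admits two-sided bounds independent of $\tau$. Since $\mathcal{E}_\alpha$ is, in its form recalled in Section~2, a continuous functional of the support function over nondegenerate convex bodies, boundedness follows. Combined with monotonicity of $\mathcal{E}_\alpha$ along the flow, the limit $\lim_{t\to -\infty}\mathcal{E}_\alpha(\Gamma_t)$ exists and is finite, and Theorem \ref{thm:superaffine.classification} then forces $\Gamma_t$ to be a shrinking circle.

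The main obstacle is verifying the uniform bound on $\mathcal{E}_\alpha$ over the precompact normalized family, because the definition of $\mathcal{E}_\alpha$ involves an optimization over a reference point $z$, and one must ensure this optimizer does not degenerate to the boundary of $\tilde\Omega_\tau$ as $\tau\to -\infty$. The existence of a uniformly-sized inscribed disk in each $\tilde\Omega_\tau$ resolves this by providing a safe, uniform choice of $z$, after which the integrals defining $\mathcal{E}_\alpha$ are controlled by the two-sided bounds on $S_z$.
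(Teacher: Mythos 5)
Your approach---deducing the corollary by showing that the Type I condition forces finite entropy and then invoking Theorem~\ref{thm:superaffine.classification}---is exactly the route the paper intends: the corollary is introduced verbatim as a restatement of that theorem ``in terms of Types,'' and the paper supplies no separate proof, leaving precisely this implication to be checked. Two small points deserve correction, though. First, the logic in your second paragraph runs backwards: to bound $\mathcal{E}_\alpha(\Omega)=\sup_{z_0\in\Omega}\mathcal{E}_\alpha(\Omega,z_0)$ \emph{from above}, exhibiting one ``safe'' reference point with two-sided bounds on its support function only gives a \emph{lower} bound on the supremum. What you actually want is that $u_{z_0}(\theta)\le \operatorname{diam}(\tilde\Omega_\tau)$ for \emph{every} $z_0\in\tilde\Omega_\tau$; since the signs of $1-\tfrac1\alpha$ and of $\tfrac{\alpha}{\alpha-1}$ flip together across $\alpha=1$, this single one-sided estimate already gives
\[
\mathcal{E}_\alpha(\tilde\Omega_\tau,z_0)\le \log \operatorname{diam}(\tilde\Omega_\tau)\quad\text{for all }z_0\in\tilde\Omega_\tau,
\]
so the diameter bound alone suffices and the inradius and width estimates are superfluous. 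Second, Definition~\ref{def:TypeII} for ancient flows negates $\lim_{t\to-\infty}A/L^2=0$, which only guarantees $A(t_i)/L(t_i)^2\ge c>0$ along some subsequence $t_i\to-\infty$, not uniformly; but since $\mathcal{E}_\alpha(\hat\Omega_\tau)$ is monotone in $\tau$ (Lemma~\ref{lem:entropy}), a subsequential entropy bound as $\tau\to-\infty$ already bounds $\lim_{\tau\to-\infty}\mathcal{E}_\alpha(\hat\Omega_\tau)$, as you implicitly observe. With these adjustments the argument is complete.
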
 

Notice that strictly convex closed  smooth Type II ancient $\alpha$-CSFs with $\alpha \in (\frac{1}{2},1]$ were discovered in \cite{angenent1992shrinking,bourni2020ancient}.  The Type II ancient flows in \cite{angenent1992shrinking,bourni2020ancient} are asymptotic to two parallel lines, and they converge to translators at their ends. 

For $\alpha\in (\frac{1}{3},\frac{1}{2}]$, it has been conjectured that there are strictly convex closed Type II ancient flows sweeping the entire plane, because the translators are entire graphs when $\alpha\in (\frac{1}{3},\frac{1}{2}]$. We remind that the translators with $\alpha >\frac{1}{2}$ are graphs defined on finite intervals.
 
 For $\alpha>1$,  the translators are not smooth. To be specific, the translators for $\alpha>1$ are of class $C^2$ and they have two flat sides as in Figure \ref{fig:test2}. See also \citet{U98GCFsoliton} and \cite[Remark 1.3]{choi2020uniqueness}. Therefore, we can simply obtain ancient flows with flat sides by gluing two translators as in  Figure \ref{fig:test2}, which look like paper clips. Clearly, these ancient paper clips with $\alpha>1$ are not smooth as the translators. In higher dimensions, the flows with flat sides have been studied to understand the shape of worn stones as a free boundary problem. See \cite{CDLflat,DH99GCFflat,DL04GCFflat,H93GCFflat,KLRflat}. 
 
\begin{figure}[ht]
\centering
  \includegraphics[width=0.7\linewidth]{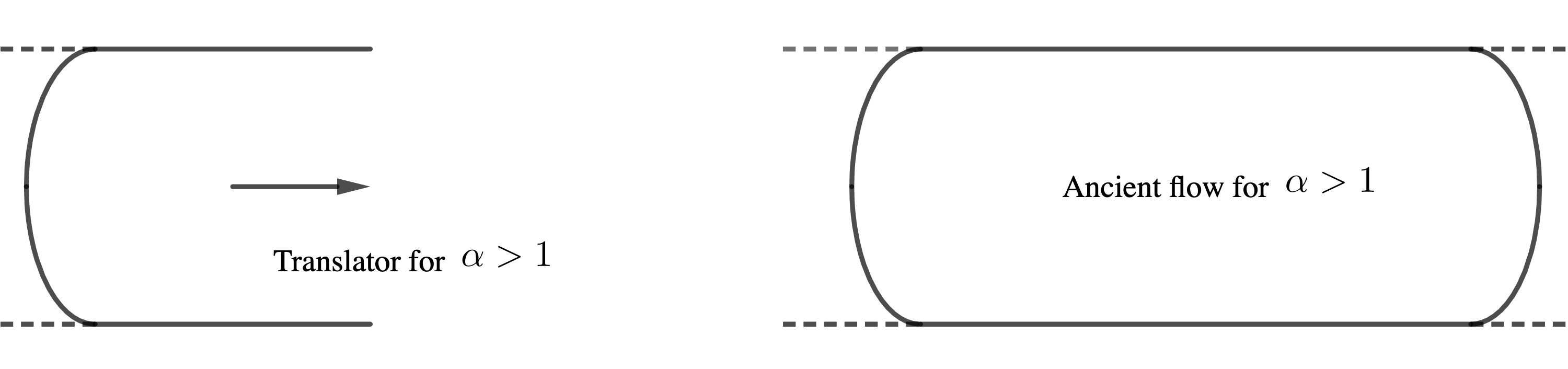}
   \caption{Ancient paper clips.}
  \label{fig:test2}
\end{figure}

\bigskip

\noindent\textbf{Open problems}

 By Theorem \ref{thm:subaffine.classification}, the convex closed ancient flows in the sub-affine-critical case are completely understood. Here, we summarize some conjectures for super-affine-critical powers.

\noindent \textit{Conjecture 1. There exists a one-parameter family (up to rigid motions and dilations) of strictly convex closed smooth Type II ancient $\alpha$-curve shortening flows with $\alpha \in (\frac{1}{3},\frac{1}{2}]$ sweeping the entire plane. Moreover, these flows are the only strictly convex closed smooth Type II ancient flows.}

\noindent \textit{Conjecture 2. The oval-shaped ancient flows in \cite{bourni2020ancient} are the only strictly convex closed smooth Type II ancient flows with $\alpha \in (\frac{1}{2},\frac{2}{3}]$.}

\noindent \textit{Conjecture 3. The ancient paper clips in Figure \ref{fig:test2} are the only convex closed Type II ancient flows with $\alpha >1$ in a weak sense. In particular, there is no smooth closed convex Type II ancient flow.}

 \bigskip

\noindent \textbf{Outline of the paper}

 In Section 2, we provide some preliminaries on  the entropy and the support function. In particular, we will show the sub-sequential convergence of rescaled ancient flows to shrinkers as the time goes to negative infinity. Moreover, the backward limit shrinker is unique up to rotations. In addition, we prove Theorem \ref{thm:superaffine.classification}.
 
 In Section 3, we consider the difference of $v$ between the support $\bar u$ of the rescaled ancient flow and the support $h$ of a nearby shrinker. The linearized operator $\mathcal{L}$ of the evolution equation of $v$ has the spectral decomposition, and therefore we can decompose the associated $L^2$-space with the norm $\|\cdot \|_h$ by the spaces spanned by unstable, neutral, and stable eigenfunctions. We consider the projection $P_-,P_*,P_+$ of $v$ into the unstable, neutral, stable spaces, respectively. Then, we will study basic dynamics of $\|P_-v\|_h,\|P_+v\|_h,\|P_*v\|_h $ by using the ODE method employed in recent researches of ancient geometric flows \cite{angenent2019unique,angenent2019unique-B,angenent2020uniqueness,brendle2018uniqueness,brendle2019uniqueness,chodosh2020mean,choi2018ancient,choi2019ancient,choi2019ancient-white}. 
  
  In Section 4,  we consider the flows converging to the round circle. The linearized operator has the kernel spanned by $\sin k\theta$ and $\cos k\theta$ if $\alpha=\frac{1}{k^2-1}$. We will study the dynamics of projections $A_k(\tau)=( v(\theta,\tau),\cos k\theta)_h$, $B_k(\tau)=( v(\theta,\tau),\sin k\theta)_h$, and some minor terms $A_{0}(\tau),A_{2k}(\tau),B_{2k}(\tau)$, to derive a contradiction for the case $\|P_-v\|_h+\|P_+v\|_h=o(\|P_*v\|_h)$ considered in Theorem \ref{thm:dichotomoy}. Hence, we obtain $\|P_-v\|_h+\|P_*v\|_h\leq C\|v\|_{C^4}\|P_+v\|_h$ so that we can show the exponential decay of $\|v(\cdot,\tau)\|_{h}$.

Notice that the linearized operators in previous researches of geometric flows \cite{angenent2019unique-B,angenent2019unique,choi2019ancient,choi2019ancient-white} have only one non-trivial neutral eigenfunction up to normalization factors. Hence, one can reduce the dynamics of projections to a single ODE. However, we deal with the dynamics of five projections in this paper.

In Section 5, we consider the ancient flows subsequentially converging to non-radial shrinkers. The kernel of the linearized operator is generated by rotations of the limit shrinker. Hence, by modifying the method of Allard-Almgren for minimal surfaces \cite{allard1981radial}, we can show the uniqueness of the tangent flow and the exponentially fast convergence in the rescaled time parameter $\tau$.
    
In Section 6,  we classify all convex closed ancient $\alpha$-curve shortening flows with $\alpha<\frac{1}{3}$ so that we prove Theorem \ref{thm:subaffine.classification}.

\bigskip

\noindent \textbf{Acknowledgements.} The authors are grateful to Shibing Chen, Beomjun Choi, Mohammad Ivaki, John Loftin, and  Christos Mantoulidis, for fruitful discussions and comments. K. Choi is supported by KIAS Individual Grant MG078901.

\bigskip

\section{Uniqueness of tangent flow at infinity}
\subsection{Preliminary}
Suppose $\Gamma$ is a convex closed curve. It is the boundary of a bounded convex domain $\Omega\subset \mathbb{R}^2$, namely $\Gamma=\partial \Omega$. The \textbf{support function} of $\Gamma$ in the direction $(\cos\theta,\sin\theta)$ with respect to some fixed point $z_0\in \mathbb{R}^2$ is defined by 
\begin{align}\label{eq:support-def}
    u_{z_0}(\theta):=\max_{z\in \Omega}\langle (\cos \theta,\sin\theta),z-z_0\rangle.
\end{align}
When $z_0$ is the origin, we write $u_{z_0}=u$ for the purpose of simplicity.

If $\Gamma$ is closed and differentiable, then each $p\in \Gamma$ has a unique inward pointing unit normal $\mathbf{N}(p)$. We say that $\theta$ the \textit{normal angle} of $p\in \Gamma$ if $\mathbf{N}(p)=-(\cos\theta,\sin\theta)$. Since $\Gamma$ is strictly convex, there is a one-to-one map between the point in $\Gamma$ and the normal angle module $2\pi$. Hence, we can parametrize $\Gamma$ by the normal angle, namely we denote the position vector by $\boldsymbol{X}(\theta)\in \Gamma$, the normal vector by $\mathbf{N}(\theta)=-(\cos \theta,\sin\theta)$, and the tangent vector by $\mathbf{T}(\theta)=(-\sin\theta,\cos\theta)$. Then, we can parametrize the \textbf{support function} of strictly convex closed differentiable curve $\Gamma$ as
\begin{align}\label{eq:support_position}
u (\theta)=-\langle  \mathbf{N}(\theta),\mathbf{X}(\theta) \rangle.
\end{align}
We can derive $u_{\theta} =\langle \mathbf{T}, \boldsymbol{X} \rangle$ from \eqref{eq:support_position}, and directly calculate
\begin{align}\label{eq:curvature-support}
    u_{\theta\theta}+u=\langle\tfrac{d}{d\theta}\mathbf{X} ,\mathbf{T}\rangle=\langle \tfrac{ds}{d\theta}\mathbf{T},\mathbf{T}\rangle=\kappa^{-1},
\end{align}
where $s$ is the arc-length parameter of $\Gamma$.

Now, we consider an evolution $\Gamma_t$ which moves by \eqref{eq:main-flow}. Then, by using \eqref{eq:curvature-support} we can obtain the evolution equation of the support function $u(\theta,t)$ of $\Gamma_t$ that
\begin{align}\label{eq:un-sppt-flow}
    u_t=-\kappa^\alpha=-(u_{\theta\theta}+u)^{-\alpha}.
\end{align}
Also, differentiating \eqref{eq:un-sppt-flow} by $t$ yields  the quasilinear  parabolic  equation 
\begin{equation}\label{eq:speed_evolution}
(\kappa^\alpha)_t= \alpha (\kappa^\alpha)^{1+\frac{1}{\alpha}} (\kappa^\alpha)_{\theta\theta} +\alpha (\kappa^\alpha)^{2+\frac{1}{\alpha}}
\end{equation}
of  the positive speed $\kappa^\alpha$.
 
Next, from \eqref{eq:un-sppt-flow} we can deduce that given a self-similarly shrinking flow  $\Gamma'_t$ there exist  $z_0\in \mathbb{R}^2$, $t_0\in \mathbb{R}$, and a static curve $\overline{\Gamma}'$ satisfying $\Gamma'_t=z_0+(t_0-t)^{-\frac{1}{\alpha+1}}(\overline{\Gamma}'-z_0)$. Therefore, given an ancient $\alpha$-CSF $\Gamma_t$,  we define the \textbf{rescaled flow} $\overline{\Gamma}_\tau$ by
\begin{align}\label{eq:un2nm2}
   \overline{\Gamma}_\tau=(1+\alpha)^{-\frac{1}{1+\alpha}}(-t)^{-\frac{1}{1+\alpha}}\Gamma_t,\quad \text{where}\,\;\tau=-\tfrac{1}{1+\alpha} \log(-t).
\end{align}
Then, the rescaled position vector $\bar{\boldsymbol{X}}=(1+\alpha)^{-\frac{1}{1+\alpha}}(-t)^{-\frac{1}{1+\alpha}}\boldsymbol{X}$ satisfies
\begin{align}\label{main-flow-nm2}
  \partial_\tau \bar{\boldsymbol{X}}  ={\bar \kappa^\alpha}\mathbf{N} +\bar {\boldsymbol{X}},
\end{align}
where $\bar \kappa(\cdot,\tau)=(1+\alpha)^{\frac{1}{1+\alpha}}(-t)^{\frac{1}{1+\alpha}}\kappa(\cdot,t)$ denotes the curvature of $\overline{\Gamma}_\tau$. Hence, we multiply the normal vector $-\mathbf{N}$ on both sides so that we obtain the evolution equation 
\begin{align}\label{eq:bar-u-tau}
    \bar u_\tau=-{\bar\kappa^\alpha}+\bar u
\end{align} 
of the   support function $\bar u(\cdot,\tau)$ of the rescaled flow $\overline{\Gamma}_\tau$. If the rescaled flow $\overline{\Gamma}_\tau$ is a static curve, then its support function $h(\theta)=\bar u(\theta,\tau)$ satisfies
\begin{align}\label{eq:h2}
   h_{\theta\theta}+h=h^{-1/\alpha}.
   \end{align}
   Hence, we say that a smooth strictly convex curve $\overline{\Gamma}$ is a \textbf{shrinker}  for the $\alpha$-curve shortening flow if its support function $h$ satisfies \eqref{eq:h2}.

\begin{remark}\label{rmk:k_fold_sym}
Since we consider embedded flows,    if a shrinker is closed, its support function $h$ is a $2\pi$-periodic solution to the ODE \eqref{eq:h2}. There, unless $h\equiv 1$, there exists some $k\in \mathbb{N}$ such that $2\pi/k$ is the fundamental period of $h$, and we say that the shrinker is \textbf{$k$-fold symmetric}. Also, if the fundamental period of a convex closed curve is  $2\pi/k$, then we say that the curve has the $k$-fold symmetry.  
\end{remark}

We recall an important classification result of Ben Andrews \cite{andrews2003classification}.

\begin{theorem}[Andrews  \cite{andrews2003classification}]\label{thm:class-ben}
Given a power  $\alpha \in (0,\frac{1}{8})$ and an integer $ k \in [3, \sqrt{1+1/\alpha} \,)$, the $\alpha$-curve shortening flow has a unique (up to rotation) closed $k$-fold symmetric shrinker embedded in $\mathbb{R}^2$.

If $0<  \alpha \neq \frac{1}{3}$, then a closed  embedded  shrinker for the $\alpha$-curve shortening flow must be the unit circle  or a $k$-fold symmetric shrinker with  $3\leq k< \sqrt{1+1/\alpha} $. 

If $\alpha=1/3$, then a closed embedded shrinker must be an ellipse.
\end{theorem}

\begin{definition}
We denote by $\Gamma_{k}^\alpha$ the $k$-fold symmetric shrinker for the $\alpha$-curve shortening flow whose support function attains its maximum at $\theta=0$.
\end{definition}

\begin{remark}
Notice that the unit circle is a shrinker. Hence, we frequently denote the unit circle by $\Gamma_{\infty}^\alpha$ to emphasize its role of the shrinker.
\end{remark}

Now, we recall  from  \cite{andrews2016flow}  the \textbf{entropy} functional $\mathcal{E}_\alpha(\Omega)$ of convex domains $\Omega$ defined by 
\begin{align}\label{def:ent}
    \mathcal{E}_\alpha(\Omega):=\sup_{z_0\in \Omega}\mathcal{E}_\alpha(\Omega,z_0)
\end{align}
where 
\begin{align}\label{def:ent-2}
\begin{split}
    \mathcal{E}_\alpha(\Omega,z_0)=\begin{dcases}
    \frac{\alpha}{\alpha-1}\log\left( \frac{1}{2\pi}\int_{\mathbb{S}^1}u_{z_0}^{1-\frac{1}{\alpha}}(\theta)d\theta\right)- \frac{1}{2}\log\frac{\mathcal{A}(\Omega)}{\pi}&\text{if }\alpha\neq 1,\\ \frac{1}{2\pi}\int_{\mathbb{S}^1}\log u_{z_0}(\theta)d\theta- \frac{1}{2}\log\frac{\mathcal{A}(\Omega)}{\pi}&\text{if }\alpha=1,
    \end{dcases}
    \end{split}
\end{align}
where $\mathcal{A}(\Omega)$ denotes the area of $\Omega$. 

\begin{remark}
 Here the definition of $\mathcal{E}_\alpha(\Omega,z_0)$ is a little bit different from that in \cite{andrews2016flow}, because we incorporate the term involving $\mathcal{A}(\Omega)$. By doing so, the entropy here is invariant under scaling, namely $\mathcal{E}_\alpha(\lambda\Omega)=\mathcal{E}_\alpha(\Omega)$ holds for any $\lambda>0$. 
\end{remark}

We remind that a closed convex curve $\Gamma$ is the boundary of a convex set $\Omega\subset \mathbb{R}^2$. Then, the following property explains the reason that we call $\mathcal{E}_\alpha$ the entropy.

\begin{proposition}[monotonicity]\label{prop:entropy-monotone}  Given   a smooth closed  $\alpha$-curve shortening flow   $\Gamma_t=\partial \Omega_t$, the entropy $\mathcal{E}_\alpha(\Omega_t)$ is strictly decreasing, unless $\partial \Omega_t$ is a self-similarly shrinking flow.
\end{proposition}

\begin{proof}
See   \cite{andrews1997monotone} or \cite[Theorem 3.1]{andrews2016flow}.
\end{proof}

We will frequently use the notation $\mathcal{E}_\alpha(\Gamma_t)$ to denote the entropy of the region bounded by the convex curve $\Gamma_t$, and define
\begin{align}
&\overline{\mathcal{E}}_\alpha(\Gamma_t)=\lim_{t\to-\infty}\mathcal{E}_\alpha(\Gamma_t) \in \mathbb{R}\cup \{+\infty\}, && \overline{\mathcal{E}}_\alpha(\Omega_t)=\lim_{t\to-\infty}\mathcal{E}_\alpha(\Omega_t) \in \mathbb{R}\cup \{+\infty\}.
\end{align}
As mentioned in the introduction, we say that a closed ancient  $\alpha$-curve shortening flow $\Gamma_t$  has \textbf{finite entropy} if \eqref{eq:def-fin.ent} holds, namely $\overline{\mathcal{E}}_\alpha(\Gamma_t) <+\infty$.

 \medskip
 
Indeed, any closed ancient flow with $\alpha <\frac{1}{3}$ has a  universal entropy bound. 
\begin{proposition}\label{prop:ent-upper-bound}  Given $\alpha \in (0,\frac{1}{3})$, there exists some constant $C_\alpha<+\infty$ such that 
\begin{equation}
\mathcal{E}_\alpha(\Omega)<C_\alpha,
\end{equation}
holds for every bounded convex open set $\Omega\subset \mathbb{R}^2$.
\end{proposition}

\begin{proof}
Since the entropy is invariant under scaling, we may assume  $\mathcal{A}(\Omega)=\pi$. Then, there exists a numeric constant  $C$ satisfying $\rho_-(\Omega) \leq C$, where $\rho_-(\Omega)$ is the inradius of $\Omega$. Hence,   \cite[Proposition 2.7. (ii)]{andrews2016flow} implies the desired result.
\end{proof}

\begin{remark}
By \cite[Proposition 2.1]{andrews2016flow}, we have $\mathcal{E}_\alpha(\Omega)\leq \mathcal{E}_{1/3}(\Omega)$ for $\alpha\leq \frac{1}{3}$. Then, we can show $\mathcal{E}_\alpha(\Omega)\leq \log{2}$  by using the John ellipse. However, Proposition \ref{prop:ent-upper-bound} is enough for this paper.   
\end{remark}

The following is an analogue of Proposition \ref{prop:ent-upper-bound} for $\alpha>\frac{1}{3}$.
\begin{proposition}[Corollary 2.2 \cite{andrews2016flow}]\label{prop:ent-low-bound}   $\mathcal{E}_\alpha(\Omega)\geq 0$ holds for $\alpha>1/3$. In addition, the equality holds if and only if $\Omega$ is a round disc.
\end{proposition}

 Next, we recall the \textbf{entropy point}.
\begin{proposition}\label{prop:ent-pt}
There is a unique point $z_e\in \Omega$ \rm (\em called \textbf{entropy point}\rm ) \em satisfying
\begin{equation}
\mathcal{E}_\alpha(\Omega)=\mathcal{E}_\alpha(\Omega,z_e).
\end{equation}  
 Also, the entropy and the entropy point are continuous. Namely, 
\begin{equation}
\lim_{d_H(\Omega,\Omega')\to 0}\left|\mathcal{E}_\alpha(\Omega)-\mathcal{E}_\alpha(\Omega')\right|+\left|z_e(\Omega)-z_e(\Omega')\right|=0
\end{equation}
holds, where $d_H(\Omega,\Omega')$ denotes the Hausdorff distance between $\Omega$ and $\Omega'$.
\end{proposition}

\begin{proof}
Lemma 2.5 and Lemma 4.3 in \cite{andrews2016flow}.
\end{proof}

\begin{proposition}[isoperimetric ratio]\label{prop:iso-peri}
Let $ \partial   \Omega_t$ be a   closed ancient \textbf{finite entropy} $\alpha$-curve shortening flow with $\alpha\neq \frac{1}{3}$. Then, there exist  some constant $C>0$   depending on $\alpha,\overline{\mathcal{E}}_\alpha(  \Omega_t)$ and  time $T\ll -1$ such that the circumradius $\rho_+( \Omega_t)$ and the inradius $\rho_-(  \Omega_t)$ of $\Omega_t$ satisfy
\begin{equation}\label{eq:radius-estimates}
C^{-1}\sqrt{\mathcal{A}(\Omega_t)}\leq \rho_-( \Omega_t)\leq \rho_+(  \Omega_t) \leq C\sqrt{\mathcal{A}(\Omega_t)} ,
\end{equation}
for $t\leq T$, and the entropy point $z_e( \Omega_t)$ with $t\leq T$ satisfies
\begin{equation}\label{eq:ent.po-distance}
\text{dist}(z_e( \Omega_t),\partial   \Omega_t)\geq C^{-1}\sqrt{\mathcal{A}(\Omega_t)}, 
\end{equation}
\end{proposition}

\begin{proof}
If  $\alpha>\frac{1}{3}$,   \cite[Proposition 2.7. (i)]{andrews2016flow} and Proposition \ref{prop:ent-low-bound} directly give \eqref{eq:radius-estimates}  for all $t$.  In the  case $\alpha<\frac{1}{3}$,  \cite[Proposition 2.7. (ii)]{andrews2016flow} yields a lower bound for $\frac{\rho_-}{\sqrt{\mathcal{A}}}$ for sufficiently negative $t$. Hence, we have an upper bound for $\frac{\rho_+}{\sqrt{\mathcal{A}}}$ .

 Next, we can obtain \eqref{eq:ent.po-distance} from \eqref{eq:radius-estimates} and  \cite[Lemma 4.4]{andrews2016flow}.
\end{proof}

\subsection{Backward convergence} In this subsection, we prove  the following theorem.

\begin{theorem}[unique backward shape]\label{thm:ent-limit}  Let $\overline{\Gamma}_\tau$ be a rescaled closed smooth ancient \textbf{finite entropy} $\alpha$-curve shortening flow with $\alpha\neq \frac{1}{3}$. Then, there exists a closed embedded shrinker $\Gamma^\alpha_k$  \rm(\em with $k\in \mathbb{N}\cup \{\infty\}$\rm) \em with the following significance:

 Given $ \varepsilon\in (0,1)$, there exists some rotation function $S(\tau):(-\infty,T]\to   SO(2)$ with $T\ll -1$  such that for each $\tau_0 \leq T$, the rotated flow $S(\tau_0)\overline{\Gamma}_\tau$  is $\varepsilon$-close to the static shrinker $\Gamma^\alpha_k$ in the $C^{[1/\varepsilon]}$-topology for $|\tau-\tau_0|\leq \varepsilon^{-1}$. 
\end{theorem} 

We notice that we can exclude the finite entropy condition when $\alpha<\frac{1}{3}$, thanks to Proposition \ref{prop:ent-upper-bound}. In addition, Theorem  \ref{thm:superaffine.classification}  immediately follows from Theorem \ref{thm:ent-limit}.
\begin{proof}[\textbf{Proof of Theorem \ref{thm:superaffine.classification}}]
By Theorem \ref{thm:ent-limit} and Theorem \ref{thm:class-ben}, the rescaled flow $\overline{\Gamma}_\tau$ converges to the unit circle $\Gamma^\alpha_\infty$ as $\tau\to -\infty$. Therefore, Proposition \ref{prop:entropy-monotone} implies
\begin{equation}
\mathcal{E}_\alpha(\Gamma_t) =\mathcal{E}_\alpha (\overline{\Gamma}_\tau)\leq \mathcal{E}_\alpha ( \Gamma_{\infty}^\alpha)=0, 
\end{equation}
for all $t$. Therefore, Proposition \ref{prop:ent-low-bound} yields the desired result.
\end{proof}

To begin with, given a bounded region $ \Omega $, we define a \textbf{normalized} region 
\begin{equation}\label{eq:normalized_region}
\hat \Omega = \lambda \Omega ,  \qquad \text{where}\quad \lambda=  \left[ \pi / \mathcal{A} ( \Omega ) \right]^{\frac{1}{2}},
\end{equation} 
which will be used only in this subsection. This normalization is useful, because we do not know yet if the area of the region enclosed by the rescaled flow $\overline{\Gamma}_\tau$ is bounded above and below. However, the normalized region $\hat\Omega_t$ always has the constant area $\mathcal{A}(\hat \Omega_t)= \pi$. 

Similarly, given a closed curve $\Gamma=\partial\Omega$ we define a \textbf{normalized} curve $\hat \Gamma=\partial \hat \Omega$.

\begin{proposition}\label{prop:area_div}
Any  closed ancient   $\alpha$-curve shortening flow  $ \partial   \Omega_t$ satisfies 
\begin{equation}\label{eq:area_divergence}
\lim_{t\to -\infty}\mathcal{A}(\Omega_t)\to +\infty.
\end{equation}
\end{proposition}
\begin{proof}
Since $\Omega_t$ monotonically shrinks, if \eqref{eq:area_divergence} fails, then there exists some large ball $B_R(0)$ including $\Omega_t$ for all $t$. However, $\partial B_{\rho(t)}(0)$ with $\rho(t)^{\alpha+1}=R^{\alpha+1}-(\alpha+1)(t-T)$ is an $\alpha$-CSF for any $T$, and we have $\Omega_T\subset B_{\rho(T)}(0)$. Thus, by the comparison principle, we have $\Omega_t \subset B_{\rho(t)}(0)$ for $t \geq T$, and therefore $\Omega_{t}=\emptyset$ for $t \geq T+(\alpha+1)^{-1}R^{\alpha+1}$. Hence, passing $T\to -\infty$ yields a contradiction.
\end{proof}

\medskip

\begin{lemma}[$C^0$ estimates]\label{lem:ent_pt_0}
Let $ \partial   \Omega_t$ be a smooth closed ancient \textbf{finite entropy} $\alpha$-curve shortening flow with $\alpha\neq \frac{1}{3}$. Then, there exist some constant $C$ depending on $\alpha,\overline{\mathcal{E}}_\alpha(\Omega_t)$ and  time $T\ll -1$ such that the support function $\hat u(\cdot,t)$ of the  \textbf{normalized} curve $\partial \hat \Omega_t$ satisfies 
\begin{equation}
  C \geq \hat u(\cdot,t)\geq C^{-1}>0,
 \end{equation}
for  $t\leq T$. Moreover, the entropy point $z_e(\hat\Omega_t)$ converges to the origin as $t\to -\infty$.
\end{lemma}

\begin{proof}
First of all, combining the upper bounds for $\rho_+$ in Proposition \ref{prop:iso-peri} and \eqref{eq:support_position} yields 
\begin{equation}\label{eq:supp_upper}
  \hat u(\cdot,t)\leq C.
 \end{equation}

Lower bounds can be obtained in a similar way to the proof of  \cite[Theorem 4.1]{andrews2016flow}. See also \cite{guan2017entropy}. To this end, we consider a decreasing sequence $\{t_i\}_{i\in \mathbb{N}}$ satisfying  $t_i\to -\infty$ and
\begin{align}\label{eq:Haus-conv}
\inf_{\mathbb{S}^1} \hat u(\cdot,t_i) =\liminf_{t\to -\infty} \min_{\mathbb{S}^1} u(\cdot,t).
\end{align}
Since we have \eqref{eq:supp_upper} and $\mathcal{A}(\hat \Omega_t)=\pi$, by the Blaschke selection theorem (cf.   \cite[Theorem 1.8.7]{schneider2014convex}),  there exists a subsequence $\{t_{i_m}\}$  such that $\{\hat \Omega_{t_{i_m}}\}$ converges to a convex body $\hat \Omega_*$ in the Hausdorff distance. We can replace $t_{i_m}$ by $t_i$ for simplicity so that $\{t_i\}$ satisfies \eqref{eq:Haus-conv} and $\hat \Omega_{t_{i}}\to \hat \Omega_*$. Then, the support function $\hat u_*$ of $\hat \Omega_*$ satisfies
\begin{equation}
\inf_{\mathbb{S}^1} \hat u_* =\liminf_{t\to -\infty} \min_{\mathbb{S}^1} u(\cdot,t),
\end{equation}
and thus it is enough to show that $dist(0,\partial\hat \Omega_*)\geq \varepsilon_0$   holds for some $\varepsilon_0>0$ depending on the entropy limit and $\alpha$. 

Now, we claim that the origin $0$ is the entropy point of $\hat \Omega_*$. If the claim is true, then Proposition \ref{prop:iso-peri} completes the proof. 

To prove the claim, for each  $i \in \mathbb{N}$ we define the shifted flow
\begin{equation}
\Omega_t^{i}=\Omega_t-z_e(  \Omega_{t_i})
\end{equation}
where $z_e(  \Omega_{t_i})$ denotes the entropy point  of $\Omega_{t_i}$. We consider the normalized curve $\partial\hat \Omega_t^{i}$ and its support function $\hat u^{i}(\cdot,t)$. Then,   Proposition \ref{prop:entropy-monotone} and  $\mathcal{A}(\hat \Omega^{i}_t)=\pi$  imply 
\begin{align}
\frac{\alpha}{\alpha-1}\log\fint_{\mathbb{S}^1}(\hat u^{i})^{1-\frac{1}{\alpha}}(\theta,t_j)d\theta=\mathcal{E}_\alpha(\hat  \Omega_{t_j}^i,0)=\mathcal{E}_\alpha(   \Omega_{t_j}^i,0) \geq  \mathcal{E}_\alpha(   \Omega_{t_i}^i,0)= \mathcal{E}_\alpha( \Omega_{t_i})
\end{align}
for $\alpha\neq 1$  and $t_j\leq t_i$.  Fixing any $i$, by Proposition \ref{prop:area_div}, the normalized shifted flows $\hat \Omega^i_{t_j}$ also converge to $\hat \Omega_*$ as $t_j\to -\infty$. Equivalently, $\lim_{j\to \infty}\hat u^{i}(\theta,t_j)=\hat u_*(\theta)$. 
\begin{align}
\lim_{j\to\infty}\left(\frac{\alpha}{\alpha-1}\log\fint_{\mathbb{S}^1}(\hat u^{i})^{1-\frac{1}{\alpha}}(\theta,t_j)d\theta\right)= \frac{\alpha}{\alpha-1}\log\fint_{\mathbb{S}^1}\hat u_*^{1-\frac{1}{\alpha}}(\theta)d\theta=\mathcal{E}_\alpha(\hat \Omega_*,0) 
\end{align}
holds for $\alpha>1$. If $\alpha<1$, the Fatou's lemma yields
\begin{align}
\liminf_{j\to\infty}\left(\log\fint_{\mathbb{S}^1}(\hat u^{i})^{1-\frac{1}{\alpha}}(\theta,t_j)d\theta\right)\geq   \log\fint_{\mathbb{S}^1}\hat u_*^{1-\frac{1}{\alpha}}(\theta)d\theta=\frac{\alpha-1}{\alpha }\mathcal{E}_\alpha(\hat \Omega_*,0).
\end{align}
Combining the inequalities above implies
\begin{equation}\label{eq:ent_pt_order}
\mathcal{E}_\alpha( \Omega_{t_i})\leq \mathcal{E}_\alpha(\hat \Omega_*,0)\leq \mathcal{E}_\alpha(\hat \Omega_*)=\lim\limits_{t\to -\infty}\mathcal{E}_\alpha(\hat\Omega_t)=\overline{\mathcal{E}}_\alpha(\Omega_t).
\end{equation}
When $\alpha=1$,  combining \eqref{eq:supp_upper} and the Fatou's lemma leads to
\begin{align}
\liminf_{j\to\infty}\left(\fint_{\mathbb{S}^1}-\log \hat u^{i} (\theta,t_j)d\theta\right)\geq   \fint_{\mathbb{S}^1}-\log \hat u_* (\theta )d\theta=-\mathcal{E}_1(\hat \Omega_*,0).
\end{align}
Therefore, by using Proposition \ref{prop:entropy-monotone} and \eqref{def:ent-2}, we also have \eqref{eq:ent_pt_order} for $\alpha=1$.

Hence, passing $t_i$ to $-\infty$ gives us $ \mathcal{E}_\alpha(\hat \Omega_*,0)= \mathcal{E}_\alpha(\hat \Omega_*)=\overline{\mathcal{E}}_\alpha(\Omega_t)$. Namely. the origin is the entropy point of $\hat \Omega_*$ as we claimed.
\end{proof}

\bigskip

\begin{lemma}[curvature estimates]\label{lem:ent_curvature}
Let $\partial     \Omega_t$ be a smooth closed ancient \textbf{finite entropy} $\alpha$-curve shortening flow with $\alpha\neq \frac{1}{3}$. Then, there exist some constants $C,\sigma>0$ depending on $\alpha$, $\overline{\mathcal{E}}_\alpha(  \Omega_t)$ and   $T\ll -1$ with the following significance: 

Given $t_0\leq T$, we consider an $\alpha$-curve shortening flow $\Omega_t^\mu:=\mu \Omega_{t_0+\mu^{-\alpha-1}t}$ with $\mu  = [\pi/\mathcal{A}(\Omega_{t_0})]^{\frac{1}{2}}$. Then, its support function $u^\mu(\cdot,t)$ and curvature $\kappa^\mu(\cdot,t)$ satisfy
\begin{align}
& C \geq   \kappa^\mu  \geq C^{-1} , && C \geq    u^\mu   \geq C^{-1} , && \text{for}\;\; t \in [-\sigma,\sigma].
\end{align}
\end{lemma}

\begin{proof}
The proof is similar to those of Theorem 5.1 and Theorem 5.2 in \cite{andrews2016flow}.

By $\mathcal{A}(\Omega^\mu_0)=\pi$ and Lemma \ref{lem:ent_pt_0}, for sufficiently negative $t_0$ there exist some $r_1,r_2>0$ depending on $\alpha,\overline{\mathcal{E}}_\alpha(  \Omega_t) $ such that  
\begin{align}
 r_1\leq   u^\mu (\cdot,0)\leq  \tfrac12 r_2.
\end{align}
Also, by Proposition \ref{prop:area_div} and Lemma \ref{lem:ent_pt_0}, there exists some $T_0<0$ such that
\begin{equation}
\min u^\mu(\cdot,T_0)=r_2.
\end{equation}
In addition, by Lemma \ref{lem:ent_pt_0}, there exists some $L$ depending on $\alpha,\overline{\mathcal{E}}_\alpha(  \Omega_t) $ such that
\begin{equation}
\max u^\mu(\cdot,T_0) \leq L \min u^\mu(\cdot,T_0)=L r_2=:r_3.
\end{equation}
See Figure \ref{fig:1} for illustration.

\begin{figure}[ht]
\centering
  \includegraphics[width=0.5\linewidth]{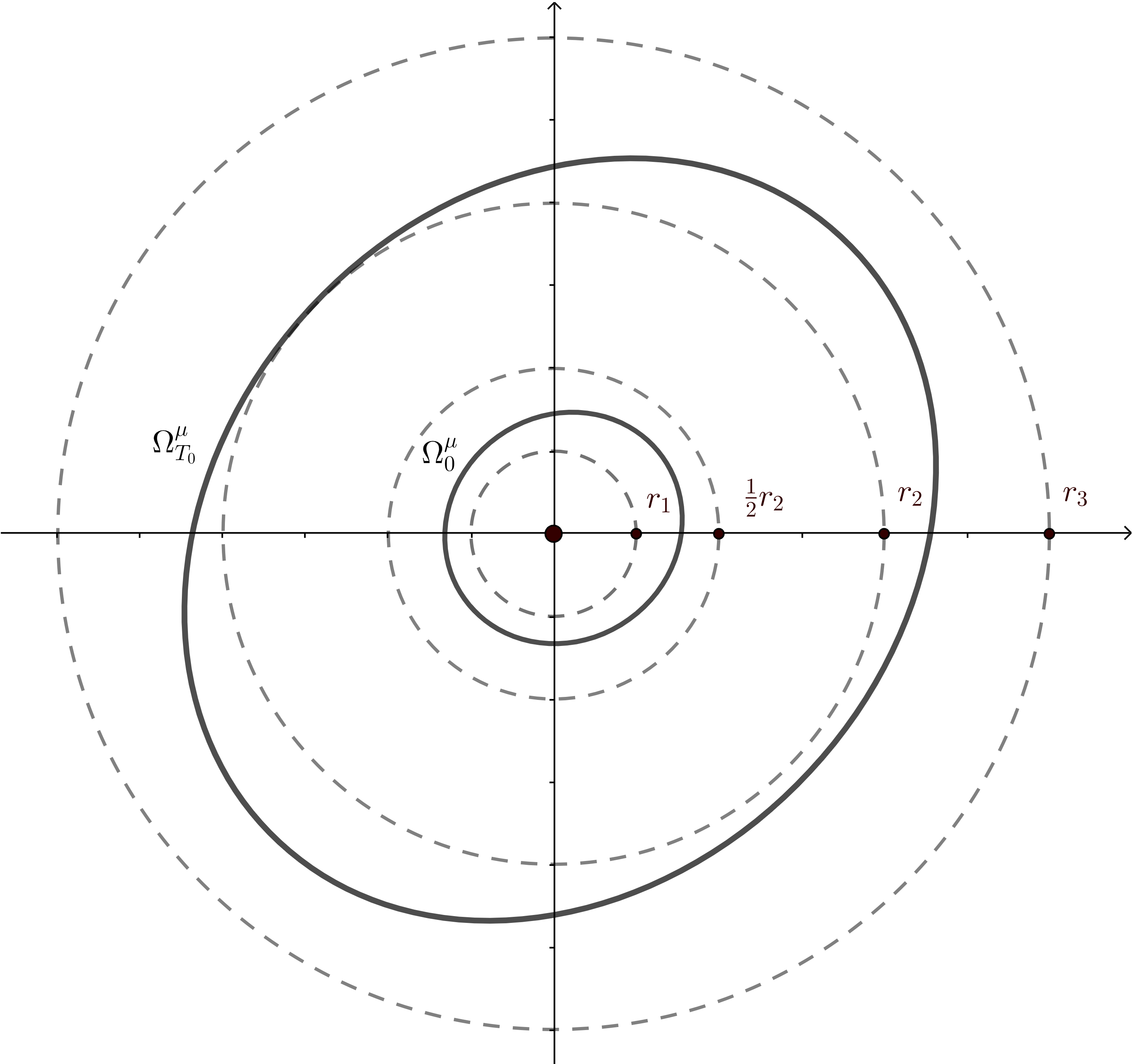}
   \caption{Four circles and time slices.}
  \label{fig:1}
\end{figure}

We observe that the shrinking circle $\partial B_{\rho_i(t)}(0)$ with $\rho_i(t)^{\alpha+1}= r_i^{\alpha+1}-(\alpha+1)t$ is the  $\alpha$-CSF for each $i=1,2,3$. Thus, by the comparison principle we have  $B_{\rho_1(t)}(0)\subset \Omega^\mu_t$ and   $   B_{\rho_2(t)}(0)\subset \Omega^\mu_{t+T_0} \subset B_{\rho_3(t)}(0)$ for $t\geq 0$.

Now, we define $\sigma_1>0$ by $\rho_1(2\sigma_1)=\frac{1}{2} r_1$. Then, $r_2 \geq 2r_1$ yield
\begin{equation}
\rho_2(2\sigma_1)^{\alpha+1}=r_2^{\alpha+1}-r_1^{\alpha+1}+(\tfrac12 r_1)^{\alpha+1} \geq 2 \left(\tfrac12 r_2\right)^{\alpha+1}-r_1^{\alpha+1} \geq  \left(\tfrac12 r_2\right)^{\alpha+1},
\end{equation}
namely $\Omega^\mu_0 \subset B_{\frac{1}{2} r_2}(0) \subset B_{\rho_2( 2\sigma_1 )}(0) \subset \Omega^\mu_{2\sigma_1 +T_0}$. Hence, we have 
\begin{align}
&T_0\leq -2\sigma_1, &&\tfrac12 r_1 \leq u^\mu(\cdot,t)\leq r_3,
\end{align}
for $t\in [   T_0, 2\sigma_1]$. Thus, by \cite[Theorem 6]{andrews2000motion} there exists some $C_1$ only depending on $\alpha,\overline{\mathcal{E}}_\alpha(\Omega_t)$ such that
\begin{equation}\label{eq:local_speed_control}
\kappa^\mu \leq C_1    
\end{equation}
holds for $t\in [-\sigma_1,2\sigma_1]$.

To derive lower bounds for $\kappa^\mu$, we first define the constant
\begin{equation}
    \sigma_2:=\min\{\sigma_1, \tfrac{1}{4} C_1^{-\alpha}r_2\},
\end{equation}
which only depends on $\alpha,\overline{\mathcal{E}}_\alpha(\Omega_t)$. Then, $\partial_t u^\mu=-(\kappa^{\mu})^\alpha$ and \eqref{eq:local_speed_control} imply 
\begin{equation}
    u^\mu(\theta,-\sigma_2)\leq \sigma_2 \max_{t\in [-\sigma_1,0]} |u^\mu_t(\theta,t)| +   u^\mu(\theta,0)\leq \sigma_2 C_1^\alpha+\tfrac12 r_2\leq \tfrac34 r_2
\end{equation}
for $\theta\in \mathbb{S}^1$. Thus, the Harnack inequality \cite[Theorem 14]{andrews2013contracting} yields
\begin{equation}
  (\kappa^\mu)^\alpha (\theta,t)\geq  \frac{u^\mu(\theta,T_0)-u^\mu(\theta,t)}{(1+\alpha)(t-T_0)}\geq    \frac{\frac{1}{4} r_2}{(1+\alpha)(2\sigma_1-T_0)}
\end{equation}
for $\theta\in \mathbb{S}^1$ and $t \in [-\sigma_2,2\sigma_1]$. Moreover, $B_{\rho_1(2\sigma_1)}(0)\subset  \Omega^\mu_{2\sigma_1} \subset  B_{\rho_3(2\sigma_1-T_0)}(0)$ leads to
\begin{equation}
 r_3^{\alpha+1}-(1+\alpha)(2\sigma_1-T_0)=\rho_3^{\alpha+1}(2\sigma_1-T_0)>0.
\end{equation}
Namely, $(\kappa^\mu)^\alpha \geq \frac{1}{4(1+\alpha)}r_2r_3^{-(\alpha+1)}$.  This completes the proof.
\end{proof}

\begin{proof}[Proof of Theorem \ref{thm:ent-limit}]
We consider the un-normalized flow $\Gamma_t=\partial\Omega_t$ and choose any sequence $\{t_i\}_{i=1}^{\infty}$ diverging to $-\infty$. We define  $\Omega_t^i:=\mu_i\Omega_{t_i+\mu_i^{-(\alpha+1)}t}$ where $\mu_i =[\pi/\mathcal{A}(\Omega_{t_i})]^{\frac{1}{2}}$. Then, by Lemma \ref{lem:ent_curvature} their support functions $u^i$ and curvatures $\kappa^i$ satisfy
\begin{align}
& C \geq   \kappa^i \geq C^{-1} , && C \geq    u^i  \geq C^{-1} , 
\end{align}
for $t\in [-\sigma,\sigma]$ and sufficiently large $i$, where $C,\sigma $ are some positive constants  only depending on $\alpha$ and the entropy limit.  Hence, the standard interior estimates (cf. \cite[Theorem 4.19]{wang1992regularity}) for the quasilinear equation \eqref{eq:speed_evolution} yield that given $\beta\in (0,1)$ there exists some constant  $C(\alpha,\overline{\mathcal{E}}_\alpha(  \Omega_t))$ satisfying
\begin{equation}
\|\kappa^i\|_{C^{\beta}(\mathbb{S}^1\times [0,\sigma])}\leq C.
\end{equation}
Since $\kappa^i=(u^i_{\theta\theta}+u^i)^{-1}$, the sequence $\{t_i\}$ has a subsequence $\{t_{i_m}\}$ such that $\Omega_t^{i_m}$ converges to $\Omega^*_t$ for $t\in [0,\sigma]$ in the $C^{2,\beta}$-topology of the support functions. Hence, $\partial \Omega^*_t$ is an $\alpha$-CSF of class $C^{2,\beta}$ for $t\in [0,\sigma]$. Moreover, we have $\mathcal{E}_\alpha(\Omega^*_s)=\overline{\mathcal{E}}_\alpha(  \Omega_t)$  for $s\in [0,\sigma]$, and therefore $\Omega^*_t$ is a self-similarly shrinking flow by Proposition \ref{prop:entropy-monotone}. Namely, 
\begin{equation}
\partial\Omega^*_t=z_*+(\alpha+1)^{\frac{1}{\alpha+1}}(t_*-t)^{\frac{1}{\alpha+1}}(S_*\Gamma_k^\alpha-z_*)
\end{equation}
for some $S_*\in SO(2)$, $z_*\in \mathbb{R}^2$, $t_* >\sigma$, and some closed embedded shrinker $ \Gamma^\alpha_k$ with $k\in \mathbb{N}\cup \{\infty\}$. Since we have $\mathcal{A}(\Omega^*_0)=\lim \mathcal{A}(\Omega^{i_m}_0)=\pi$, there is a constant $C^\alpha_k$ only depending on $\alpha$ and $\Gamma^\alpha_k$ such that $t_*=C^\alpha_k$. In addition, we have $z_*=0$, because the entropy point of $\Omega^*_t$ is the origin by Lemma \ref{lem:ent_pt_0}. Moreover, Theorem \ref{thm:ent-cmp} says that the shape of the shrinker is uniquely determined by the entropy $\mathcal{E}_\alpha(\Omega^*_0)=\overline{\mathcal{E}}_\alpha(  \Omega_t)$. Therefore, there exist rotations $S_{i_m}\in SO(2)$ such that 
 \begin{equation}\label{Siit}
\lim_{i_m\to +\infty}S_{i_m} \partial\Omega_t^{i_m} = (\alpha+1)^{\frac{1}{\alpha+1}}(C^\alpha_k-t)^{\frac{1}{\alpha+1}} \Gamma_k^\alpha,
\end{equation}
in the $C^{2,\beta}$-topology for $t\in [0,\sigma]$. Here using $\mathcal{A}(\Omega_0^{i_m})=\pi$, one gets \begin{align}\label{Cak=pi}
    (\alpha+1)^{\frac{2}{1+\alpha}}(C^\alpha_k)^{\frac{2}{1+\alpha}}\mathcal{A}(\Gamma_k^\alpha)=\pi.
\end{align}
Switching back to the original flow, as $i_m\to \infty$,
\begin{equation}\label{Sit-ak}
    S_{i_m}\partial  \Omega_t \to (\alpha+1)^{\frac{1}{\alpha+1}} [ C^\alpha_k\mu_{i_m}^{-(\alpha+1)} + t_{i_m}-t ]^{\frac{1}{\alpha+1}}\Gamma^\alpha_k,
\end{equation}
 holds for $t\in [t_{i_m},t_{i_m}+\mu_{i_m}^{-(\alpha+1)}\sigma]$. Since rotations preserve the area, we have
\begin{align}
       \mathcal{A}(\Omega_t)=& (\alpha+1)^{\frac{2}{\alpha+1}}[ C^\alpha_k\mu_{i_m}^{-(\alpha+1)} + t_{i_m}-t ]^{\frac{2}{\alpha+1}}\mathcal{A}(\Gamma^\alpha_k)+o(\mu_{i_m}^{-2}),\\       
       \tfrac{d}{dt}\mathcal{A}(\Omega_t)=&-2 (\alpha+1)^{-\frac{\alpha-1}{\alpha+1}}[ C^\alpha_k\mu_{i_m}^{-(\alpha+1)} + t_{i_m}-t ]^{-\frac{\alpha-1}{\alpha+1}}\mathcal{A}(\Gamma^\alpha_k)+o(\mu_{i_m}^{ \alpha-1}),
\end{align}
for $t\in [t_{i_m},t_{i_m}+\mu_{i_m}^{-(\alpha+1)}\sigma]$. Namely,
\begin{equation}\label{eq:area_derivative}
\tfrac{d}{dt}\mathcal{A}(\Omega_{t}) =-2\mathcal{A}(\Omega_{t})^{-\frac{\alpha-1}{2}}\left[\mathcal{A}(\Gamma^\alpha_k)^{\frac{\alpha+1}{2}}+o(1)\right] 
\end{equation}
for $t\in [t_{i_m},t_{i_m}+\mu_{i_m}^{-(\alpha+1)}\sigma]$, where $o(1)$ depends on $\mu_{i_m}^{-1}$.
Since $\mu_i\to +\infty$, we have \eqref{eq:area_derivative} for any subsequence $\{t_{i_m}\}$ with the convergent $\{\Omega_t^{i_m}\}$, and therefore we have \eqref{eq:area_divergence} for $t\in [t_i,t_i+\mu_i^{-(\alpha+1)}\sigma]$. Moreover, \eqref{eq:area_derivative} holds for any sequence $\{t_i\}$ diverging to $-\infty$, because $\lim_{t\to-\infty} \mathcal{A}(\Omega_t)=+\infty$. Hence
\begin{equation}
   \lim_{t\to -\infty}\tfrac{d}{dt}\mathcal{A}(\Omega_{t})^{\frac{\alpha+1}{2}}=-(\alpha+1)\mathcal{A}(\Gamma^\alpha_k)^{\frac{\alpha+1}{2}}. 
\end{equation}
Simple integration reveals that $\mathcal{A}(\Omega_t)=(1+\alpha)^{\frac{2}{\alpha+1}}\mathcal{A}(\Gamma^\alpha_k)|t|^{\frac{2}{1+\alpha}}+o(|t|^{\frac{2}{1+\alpha}})$ as $t\to -\infty$. Since $\mu_{i_m}=[\pi/\mathcal{A}(\Omega_{t_{i_m}})]^{1/2}$,  the identity \eqref{Cak=pi} implies 
\begin{equation}\label{Cak=-1}
    \lim_{i\to \infty} C^\alpha_k\mu_{i_m}^{-(\alpha+1)}/t_{i_m}=-1.
\end{equation}
Hence, by the rescaling in \eqref{eq:un2nm2}, the convergence \eqref{Sit-ak} infers that 
\begin{equation}
    S_{i_m}\partial \overline{\Omega}_\tau  \to  [e^{(1+\alpha)\tau}(C^\alpha_k\mu_{i_m}^{-(\alpha+1)}+t_{i_m}) +1]^{\frac{1}{\alpha+1}}\Gamma^\alpha_k
\end{equation}
for $\tau\in [-(1+\alpha)^{-1}\log (-t_{i_m}),-(1+\alpha)^{-1}\log(-t_{i_m}+\mu_{i_m}^{-(\alpha+1)}\sigma)]$.

 Hence, by \eqref{Cak=-1}, there exist  some constant $\bar \sigma>0$ depending on $\sigma,\alpha,k$ and rotations $  S(\tau)\in SO(2)$ such that $  S(\tau_0)\partial \overline{ \Omega}_\tau$ with $\tau\in [\tau_0,\tau_0+2\bar \sigma]$ converges to the static shrinker $\Gamma_k^\alpha$ as $\tau_0\to -\infty$. In addition, $S(\tau_0)\partial \overline{\Omega}_{\tau}$ gets closer and closer to $S(\tau_0+\bar \sigma)\partial \overline{\Omega}_{\tau}$ for $\tau \in [\tau_0+\bar \sigma,\tau_0+2\bar \sigma]$, namely we can choose $S(\tau)$ to satisfy $\displaystyle \lim_{\tau\to -\infty} \sup_{s\in [0,\bar \sigma]}|S(\tau)-S(\tau+s)|=0$. Since $|S(\tau)-S(\tau+m\bar \sigma)|\leq \sum_{l=1}^m|S(\tau+(l-1)\bar \sigma)-S(\tau+l \bar \sigma)|$, we have $\displaystyle \lim_{\tau\to -\infty} \sup_{s\in [0,K]}|S(\tau)-S(\tau+s)|=0$ for any fixed $K\gg 1$. Thus taking $\tau_0$ negative enough,  one achieves that the flow $  S(\tau_0)\partial \overline{ \Omega}_\tau$ in the fixed amount of time interval $|\tau-\tau_0|\leq 2\varepsilon^{-1}$ converges to the static shrinker $\Gamma^\alpha_k$ in the $C^{2,\beta}$-topology. Then, by using the standard higher order regularity theory for parabolic PDEs, one can easily deduce the desired result.
   \end{proof}

\medskip

\begin{remark}
We notice that the \L{}ojasiewicz-Simon inequality in \cite{andrews1997monotone} implies that the rotation $S(\tau)$ in Theorem \ref{thm:ent-limit} has a limit, namely our ancient flow has the unique tangent flow at backward infinity, but it does not guarantee the exponentially fast convergence. On the other hand, the Allard-Almgren's method works for the $\alpha$-CSFs, and it yields both of the uniqueness of the tangent flow and the fast convergence. Hence, we will use the Allard-Almgren's method in Section \ref{sec:non-radial}, not considering the \L{}ojasiewicz-Simon inequality. However, We would like to quickly explain how to use the inequality for the readers who maybe interested in.

In \cite[Proposition 20]{andrews1997monotone}, the notation there $\frac{1}{2}\log \mathcal{Z}_{1/\alpha}^{\alpha/(\alpha-1)}$ denotes the entropy $\mathcal{E}_\alpha$. In addition, the entropy is an analytic functional. Thus, one can obtain the \L{}ojasiewicz-Simon inequality  as \cite[Proposition 20]{andrews1997monotone}. Therefore \cite[Proposition 21]{andrews1997monotone} implies that  the rescaled flow will stay in a neighborhood of the limit shrinker for the sufficiently negative $\tau$.
\end{remark}

\bigskip

\section{Spectral dichotomy}\label{sec:dich}

In this section, we establish some lemmas which are needed in the following sections. For example, we will use Lemma \ref{lem:MZ_v} in Section \ref{sec:non-radial}. Also, we establish a dichotomy Theorem \ref{thm:dichotomoy} and consider each case in Section \ref{sec:radial} and in Section \ref{sec:classification}.

We recall that a rescaled flow $\overline{\Gamma}_\tau$ has the support function $\bar u(\theta,\tau)$ satisfying 
\begin{align} 
    \bar u_\tau=-{\bar\kappa^\alpha}+\bar u.
\end{align} 
Then, Theorem \ref{thm:ent-limit} implies that given $ \varepsilon \in (0,\frac{1}{100})$ there exists some  $T\ll -1$ such that each $\tau_0\leq T$ has some positive $h\in C^{\infty}(\mathbb{S}^1)$ satisfying
\begin{align}\label{eq:shrinker}
   h_{\theta\theta}+h=h^{-1/\alpha},
   \end{align}
   and 
   \begin{equation}\label{eq:local_conv}
\sup_{|\tau-\tau_0|\leq \varepsilon^{-1}}   \|v(\cdot,\tau)\|_{C^{20}(\mathbb{S}^1)} \leq \varepsilon
   \end{equation}
   where  $v=\bar u-h$. If the rotation $S(\tau)$ in Theorem \ref{thm:ent-limit} is a constant function or  $h\equiv 1$, then by taking $h$ as the support function of the limit shrinker we have
   \begin{equation}\label{eq:global_conv}
\lim_{ \tau\to -\infty}   \|v(\cdot,\tau)\|_{C^{20}(\mathbb{S}^1)} =0.
   \end{equation}   
   Furthermore, the difference $v$ satisfies
   \begin{align}\label{eq:v-flow}
   \begin{split}
   v_\tau=&-{(h_{\theta\theta}+h+v_{\theta\theta}+v)^{-\alpha}}+(h+v)\\
   =&-(h^{-\frac{1}{\alpha}}+v_{\theta\theta}+v)^{-\alpha}+h+v=:\mathcal{L}v+E (v),
   \end{split}
   \end{align}
where $\mathcal{L}$ is the linearized operator at $v=0$, namely
   \begin{align}\label{eq:L-def}
   \mathcal{L} v= \alpha h^{1+\frac{1}{\alpha}}({v_{\theta\theta}+v})+v.
  \end{align}   
We also use $\mathcal{L}_{\Gamma_k^\alpha}$ for clarification to denote the linearized operator above, when we consider the operators for different shrinkers.
  
Thus, the remaining quadratic error term is given by
\begin{align}\label{def:E}
    E(v)=-{(h^{-\frac{1}{\alpha}}+v_{\theta\theta}+v)^{-\alpha}}+h-\alpha h^{1+\frac{1}{\alpha}}(v_{\theta\theta}+v).
\end{align}

We now introduce the space  $L^2_h(\mathbb{S}^1)=\{f:\mathbb{S}^1\to \mathbb{R}:\|f\|_h^2<+\infty\}$, where $\|f\|_h^2=\int_{\mathbb{S}^1}f^2h^{-1-\frac{1}{\alpha}}$. The space is equipped with the inner product
   \begin{align}\label{eq:h-inner}
   (f,g)_h=\int_{\mathbb{S}^1}fgh^{-1-\frac{1}{\alpha}}d\theta.
   \end{align}
   Since $h>0$,  this norm is equivalent to the standard $L^2$ norm over $\mathbb{S}^1$. More importantly, $\mathcal{L}$ is a self-adjoint operator on $L^2_{h}$ over the compact space $\mathbb{S}^1$. Thus we have eigenfunctions $\{\varphi_j\}_{j \in \mathbb{N}}$ such that $\text{span}\{\varphi_1,\varphi_2,\cdots\}=L^2_h$, $(\varphi_i,\varphi_j)_h=\delta_{ij}$, and $\mathcal{L}\varphi_j+\lambda_j \varphi_j=0$, where the eigenvalues $\{\lambda_j\}_{j\in \mathbb{N}}$ satisfy  $\lambda_j \leq  \lambda_{j+1}$ and $\lim_{j\to +\infty}\lambda_j=+\infty$.

Given $f:\mathbb{S}^1\to \mathbb{R}$, we define projection operators $P_j$ by $P_jf=(f,\varphi_j)_h\varphi_j$ and
\begin{align}
&P_{<\lambda}=\sum_{\{j:\lambda_j<\lambda\}}P_j, && P_{=\lambda}=\sum_{\{j:\lambda_j=\lambda\}}P_j, && P_{>\lambda}=\sum_{\{j:\lambda_j>\lambda\}}P_j.
\end{align}
For $\lambda=0$, we will frequently use the following notations for the purpose of brevity. 
\begin{align}\label{def:proj}
&P_+=\sum_{\{j:\lambda_j<0\}}P_j, && P_*=\sum_{\{j:\lambda_j=0\}}P_j,&&   P_-=\sum_{\{j:\lambda_j>0\}}P_j.
\end{align}
We denote the projection to the negative eigensapce by $P_+$ instead of $P_-$, because $\|P_+v\|_h$ would increase in time due to its unstability. 

\bigskip

\begin{proposition}[non-vanishing]\label{prop:non-vanish}
Let $v$ be  a solution to \eqref{eq:v-flow} for some positive $h\in C^{\infty}(\mathbb{S}^1)$ satisfying \eqref{eq:shrinker}. Then, $\|v(\cdot,\tau)\|_{h}>0$ holds for every $\tau$ unless $v\equiv 0$. 
\end{proposition}

\begin{proof}
Suppose that $\|v(\cdot,\tau_0)\|_{h}=0$ at some $\tau_0\in \mathbb{R}$ and denote by $\overline{\Omega}_\tau$ the region enclosed by the rescaled flow $\overline{\Gamma}_\tau$. Since the entropy is invariant under rotations, $\overline{\Gamma}_{\tau_0}$ has the same entropy to the backward limit shrinker, namely 
\begin{equation}
\mathcal{E}_\alpha(\overline{\Omega}_{\tau_0})=\lim_{\tau\to -\infty}\mathcal{E}_\alpha(\overline{\Omega}_{\tau}).
\end{equation}
Therefore, Proposition \ref{prop:entropy-monotone} implies $v\equiv 0$.
\end{proof}

\begin{lemma}[error estimates]\label{lem:R<v}
Suppose that $v$ and $h$ are solutions   to \eqref{eq:v-flow} and  \eqref{eq:shrinker}, respectively, satisfying \eqref{eq:local_conv} for some $\tau_0\leq -1$ and $\varepsilon>0$.
Then, there exists some  $0<\varepsilon_0\ll 1$ depending on $\alpha,h$ such that if $\varepsilon\leq \varepsilon_0$ then for each $j\in \mathbb{N}$
\begin{align}
|(E,P_-v)_h|+\sum_{\lambda_j\leq 0} |(E,P_jv)_h|\leq C\|v\|_{C^4} \|v\|_h^2,
\end{align}
holds for $\tau\in [\tau_0-\varepsilon^{-1},\tau_0+\varepsilon^{-1}]$ and $E$ defined in \eqref{def:E}, where $C$ is some  constant depending on $\alpha, h$ and $\{\varphi_j:\lambda_j\leq 0\}$.
\end{lemma}

\begin{proof} We consider a function $f(x)$ defined by 
\begin{equation}\label{eq:error_f}
x^2f(x)=-(1+x)^{-\alpha}+1-\alpha x.
\end{equation}
By the Taylor expansion of $(1+x)^{-\alpha}$, $f(x)$ is smooth near $x=0$ . Hence, by the definition of $E$ in \eqref{eq:v-flow}, we get $E=h^{1+\frac{1}{\alpha}}(v+v_{\theta\theta})^2\rho$, where $\rho(v)=h^{\frac{1}{\alpha}}f(h^{\frac{1}{\alpha}}(v+v_{\theta\theta}))$ is a smooth positive function.  Since $v\to 0$ in the $C^{20}$-topology, there exists a constant $C$ only depending on $h,f$ such that $|\partial_\theta^m\rho|\leq C$ holds for $m=0,1,2,3$ and sufficiently negative $\tau$.

Now, by using $E=(v+v_{\theta\theta})^2h^{1+\frac{1}{\alpha}}\rho$ we can obtain
\begin{align}
\begin{split}\label{eq:evv}
(E,v)_h=&\iS (v_{\theta\theta}+v)^2v\rho d\theta=\iS v_{\theta\theta}^2v\rho+v^2(v+2v_{\theta\theta})\rho d\theta\\
=&\iS v_{\theta\theta}^2v\rho d\theta +O(1)\|v\|_{C^2}\|v\|_{h}^2.
\end{split}
\end{align}
Here $O(1)$ denotes a quantity bounded by some numeric constant $C$ for negative enough $\tau$. In addition,
\begin{align}
\iS v_{\theta\theta}^2v\rho d\theta=& -\iS v_{\theta\theta\theta}v_{\theta}v\rho+v_{\theta\theta}v_\theta^2\rho+v_{\theta\theta}v_{\theta}v\rho_{\theta}\\
=&-\iS \tfrac12v_{\theta\theta\theta}(\partial_\theta v^2)\rho+\tfrac13 \partial_\theta (v_{\theta}^3)\rho+\tfrac12v_{\theta\theta}(\partial_\theta v^2)\rho_\theta d\theta.
\end{align}
Hence, by using integration by parts we have
\begin{align}
\begin{split}
\iS v_{\theta\theta}^2v\rho d\theta=&\frac13\iS v_{\theta}^3\rho_\theta d\theta+O(1)\|v\|_{C^4}\|v\|_h^2\\
=&-\frac13\iS 2v_{\theta\theta}v_{\theta}v\rho_\theta+v_{\theta}^2v\rho_{\theta\theta}d\theta+O(1)\|v\|_{C^4}\|v\|_h^2\\
=&-\frac13\iS v_{\theta\theta}(\partial_\theta v^2)\rho_\theta+\tfrac12 v_{\theta}(\partial_\theta v^2)\rho_{\theta\theta}d\theta+O(1)\|v\|_{C^4}\|v\|_h^2\\
=&O(1)\|v\|_{C^4}\|v\|_h^2.
\end{split}
\end{align}
Combine the above inequalities,
\begin{align}\label{eq:evh}
|(E,v)_h|\leq C\|v\|_{C^4}\|v\|_h^2.
\end{align}
On the other hand, we have
\begin{align}
(E,P_jv)_h=\iS (v_{\theta\theta}+v)^2P_jv\rho d\theta=\iS v_{\theta\theta}^2 P_jv\rho d\theta+O(1)\|v\|_{C^2}\|v\|_h^2.
\end{align}
Moreover, using integration by parts
\begin{align}
\begin{split}
\iS v_{\theta\theta}^2 P_jv\rho d\theta=&-\iS v_{\theta\theta\theta}v_{\theta}(P_jv)\rho+v_{\theta\theta}v_{\theta}[(P_jv)\rho]_\theta\ d\theta\\
=&-\iS v_\theta[v_{\theta\theta\theta}(P_jv)\rho]+\tfrac12 \partial_\theta (v_\theta^2)[(P_jv)\rho]_\theta d\theta\\
=&\iS v[v_{\theta\theta\theta}(P_jv)\rho]_\theta+\tfrac12 v_\theta^2[(P_jv)\rho]_{\theta\theta}d\theta\\
=&O(1)\|v\|_{C^4}\|v\|_h\|P_jv\|_{H^1}+\iS\tfrac12 v_\theta^2[(P_jv)\rho]_{\theta\theta}d\theta.
\end{split}
\end{align}
For the second term, we have 
\begin{align}
\begin{split}
&\iS\tfrac12 v_\theta^2[(P_jv)\rho]_{\theta\theta}d\theta\\
=&\iS  v_\theta [\tfrac12 v_\theta(P_jv)_{\theta\theta} \rho+ v_\theta(P_jv)_{\theta} \rho_\theta+ \tfrac12 v_\theta(P_jv) \rho_{\theta\theta}] d\theta\\
=&-\iS  v  \left[\tfrac12 v_{\theta\theta}(P_jv)_{\theta\theta} \rho+\tfrac12 v_\theta(P_jv)_{\theta\theta\theta} \rho+\tfrac32 v_{\theta}(P_jv)_{\theta\theta} \rho_\theta+ v_{\theta\theta}(P_jv)_{\theta} \rho_{\theta}\right.\\
&\qquad\qquad \left.+\tfrac32 v_\theta(P_jv)_\theta \rho_{\theta\theta}+\tfrac12 v_{\theta\theta}(P_jv)\rho_{\theta\theta}+\tfrac12 v_\theta(P_jv)\rho_{\theta\theta\theta}\right] d\theta\\
=&\,O(1)\|v\|_{C^2}\|v\|_h\|P_jv\|_{H^3}.
\end{split}
\end{align}
Thus,
\begin{equation}\label{eq:P_error}
|(E,P_jv)_h|\leq C\|v\|_{C^4}\|v\|_h\|P_jv\|_{H^3}.
\end{equation}
We observe that $P_jv=(v,\varphi_j)_h\varphi_j$ gives $\partial_\theta^m P_jv=(v,\varphi_j)_h \partial_\theta^m \varphi_j$, and thus
\begin{align}
\|\partial_\theta^m P_jv\|_{L^2}^2\leq C_j \| P_jv\|_{L^2}^2,
\end{align}
where $C_j$ only depends on $m$ and  $ \varphi_j$. Therefore,
\begin{equation}
|(E,P_jv)_h| \leq C\|v\|_{C^4}\|v\|_{h}\|P_jv\|_{H^3}\leq C_j\|v\|_{C^4}\|v\|_{h}^2.
\end{equation}
Thus, combining \eqref{eq:evh} and the above estimates for all $\lambda_j\leq 0$ yields
\begin{equation}
|(E,P_-v)_h| \leq C\|v\|_{C^4}\|v\|_{h}^2.
\end{equation}
\end{proof}

In the special case $h\equiv 1$, we can leverage the previous method to achieve higher order estimates.  

\begin{lemma}[higher order error estimates]\label{lem:derivative_error}
Suppose that  an ancient solution  $v$ to \eqref{eq:v-flow} with $h\equiv 1$  converges to $0$ in the $C^{20}$-topology  as $\tau\to -\infty$.
Then, for each $m\leq \{0,\cdots,9\}$ there exists some negative enough $T$ such that the following holds for $\tau \leq T$.
\begin{align}
|(\partial_\theta^m E,P_-\partial_\theta^mv)_{L^2}|+\sum_{\lambda_j\leq 0 } |(\partial_\theta^m E,P_j\partial_\theta^mv)_{L^2}|\leq C\| v\|_{C^{m+4}} \|\partial_\theta^mv\|_{L^2},
\end{align}
where $C$ is some  constant depending on $\alpha,m$ and $\{\varphi_j:\lambda_j\leq 0\}$.
\end{lemma}
\begin{proof}
As in the proof of Lemma \ref{lem:R<v}, we can consider 
\begin{equation}
E(v)=(v+v_{\theta\theta})^2f(v+v_{\theta\theta}),
\end{equation}
where $f$ is a smooth function defined in a neighborhood of $0$, which is defined in \eqref{eq:error_f}. In addition, for each $m\leq 9$ we define a function $\rho_m$ of $v_{\theta\theta}+v$ by
\begin{equation}
\partial_\theta^m E(v)=(v+v_{\theta\theta})^2\rho_m.
\end{equation}
Then, there exist some constant $C$ and negative time $T$ such that
\begin{equation}
|\partial_\theta^l\rho_m| \leq C,
\end{equation}
holds for $\tau\leq T$, $m\in \{0,\cdots, 9\}$, and $l\in \{0,\cdots, 3\}$. 

Next, $h\equiv 1$ implies $\partial_\theta \mathcal{L} =\mathcal{L}\partial_\theta$ and thus we have
\begin{equation}\label{eq:evolution_derivatives}
\partial_\tau \partial_\theta^m v= \mathcal{L}  \partial_\theta^m v+ \partial_\theta^m E(v).
\end{equation}
Hence, as we obtained \eqref{eq:evh} in the proof of Lemma \ref{lem:R<v}, we can derive
\begin{equation}\label{eq:high-error-L2}
|(\partial_\theta^m E,\partial_\theta^m v)_{L^2}|\leq C\| v\|_{C^{m+4}}\|\partial_\theta^m v\|_{L^2}^2.
\end{equation}
 In addition, for each $j$ we can have
\begin{equation}
|(\partial_\theta^m E,P_j\partial_\theta^m v)_{L^2}|\leq C\| v\|_{C^{m+4}}\|\partial_\theta^m v\|_{L^2}\|P_j\partial_\theta^m v\|_{H^3}.
\end{equation}
Since $\|P_j\partial_\theta^m v\|_{H^3} \leq C_j\|\partial_\theta^m v\|_{L^2}$ for some constant $C_j$  depending on $\varphi_j$, we have
\begin{equation}\label{eq:high-error-decompose}
|(\partial_\theta^m E,P_j\partial_\theta^m v)_{L^2}|\leq C_j\| v\|_{C^{m+4}}\|\partial_\theta^m v\|_{L^2}^2.
\end{equation}
Therefore, combining \eqref{eq:high-error-L2} and \eqref{eq:high-error-decompose} completes the proof.
\end{proof}

\begin{lemma}\label{lem:MZ_v} We recall $v,h,\tau_0,\varepsilon$ in Lemma \ref{lem:R<v}. Then, there exist some constant $C,\varepsilon_0 $ depending on $h,\alpha$  such that if $\varepsilon\leq \varepsilon_0$ then
\begin{align}
 \tfrac{d}{d\tau}\|P_+v \|_{h}^2 &\geq -2\lambda_I \|P_+v \|_{h}^2-C\|v \|_{C^{4}} \|v \|_{h}^2, \label{eq:diff-ieq-P-}\\
 \left| \tfrac{d}{d\tau} \|P_*v\|_{h}^2\right|&\leq C\|v \|_{C^{4}}\|v \|_{h}^2,  \label{eq:V0-1}\\
     \tfrac{d}{d\tau} \|P_-v \|_{h}^2 &\leq  -2\lambda_{J} \|P_-v \|_{h}^2+C\|v \|_{C^{4}}\|v \|^2_{h},\label{eq:V-flow}
\end{align}
hold for $\tau \in [\tau_0-\varepsilon^{-1},\tau_0+\varepsilon^{-1}]$,  where $\lambda_I$ denotes the greatest negative eigenvalue and $\lambda_J$ denotes the least positive eigenvalue.
\end{lemma}

\begin{proof}
As in \cite[Lemma 5.5]{angenent2019unique}, by using Lemma \ref{lem:R<v} we have 
\begin{align}
\begin{split}
\tfrac12 \tfrac{d}{d\tau}\|P_+v\|_{h}^2=& (P_+v,(P_+v)_\tau)_h=(P_+v,v_\tau)_h= (P_+v,\mathcal{L} v+E)_h\\
\geq& -\lambda_I \|P_+v(\cdot,\tau)\|_{h}^2-C\|v\|_{C^4} \|v(\cdot,\tau)\|_{h}^2.
\end{split}
\end{align}
In the same manner, we can obtain the other inequalities.
\end{proof}

\begin{theorem}[spectral dichotomy]\label{thm:dichotomoy} Suppose that  an ancient solution  $v$ to \eqref{eq:v-flow} with $h\equiv 1$  converges to $0$ in the $C^{20}$-topology  as $\tau\to -\infty$, and $v$ is not identically zero. Then, there exist  some constant $C$ and negative enough $T$ such that either
\begin{align}\label{eq:neutral_dominate}
\|P_-   v\|_{C^8(\mathbb{S}^1)}+\|P_+   v\|_{C^8(\mathbb{S}^1)}+\left|\tfrac{d}{d\tau}\|P_* v\|_{L^2(\mathbb{S}^1)}\right| =o( \|P_* v\|_{L^2(\mathbb{S}^1)})  , 
\end{align}
or 
\begin{equation}\label{eq:unstable_dominate}
\|P_-   v\|_{L^2(\mathbb{S}^1)}+\|P_*   v\|_{L^2(\mathbb{S}^1)}\leq C \|v\|_{C^4(\mathbb{S}^1)} \|P_+ v\|_{L^2(\mathbb{S}^1)}
\end{equation}
holds for $\tau  \leq T$.
\end{theorem}

\begin{proof}
By the non-vanishing proposition \ref{prop:non-vanish}, we have $\|v\|_{L^2}>0$ for all $\tau$. In addition, we have the inequalities in Lemma \ref{lem:MZ_v} for $\tau\leq T$ for some negative enough $T\ll -1$. Therefore, by Lemma \ref{lem:MZ}, either \eqref{eq:unstable_dominate} or 
\begin{equation} \label{eq:neutral_dominate_I}
\|P_-   v\|_{L^2(\mathbb{S}^1)}+\|P_+   v\|_{L^2(\mathbb{S}^1)}= o(\|P_* v\|_{L^2(\mathbb{S}^1)}),
\end{equation}
holds. Suppose \eqref{eq:neutral_dominate_I} holds.  Then, we have $\|P_*v\|_{L^2}>0$ and thus Lemma \ref{lem:MZ_v} implies
\begin{align}
\left|\tfrac{d}{d\tau}\|P_* v\|_{L^2(\mathbb{S}^1)}\right| =o( \|P_* v\|_{L^2(\mathbb{S}^1)}). 
\end{align}

On the other hand, as the proof of Lemma \ref{lem:MZ_v}, \eqref{eq:evolution_derivatives} and  Lemma \ref{lem:derivative_error} give us
\begin{align}
 \tfrac{d}{d\tau}\|P_+\partial_\theta^m v \|_{L^2}^2 &\geq -2\lambda_I \|P_+\partial_\theta^m v \|_{L^2}^2-C\|v \|_{C^{m+4}} \|\partial_\theta^m v \|_{L^2}^2, \\
 \left| \tfrac{d}{d\tau} \|P_*\partial_\theta^m v\|_{L^2}^2\right|&\leq C\|v \|_{C^{m+4}}\| \partial_\theta^m v \|_{L^2}^2, \\
     \tfrac{d}{d\tau} \|P_- \partial_\theta^m v \|_{L^2}^2 &\leq  -2\lambda_{J} \|P_- \partial_\theta^m v \|_{L^2}^2+C\|v \|_{C^{m+4}}\|\partial_\theta^m v \|^2_{L^2},
\end{align}
for each $m\in \{0,\cdots,9\}$ and $\tau\leq T$, where $T\ll-1$ is some negative enough time.

Moreover, we have $\|\partial_\theta^m v \|^2_{L^2}>0$ for all $\tau$. If  $\|\partial_\theta^m v(\cdot,\tau_0) \|_{L^2}=0$ for some $\tau_0$, then $v(\cdot,\tau_0)$ is a constant and thus $\overline{\Gamma}_{\tau_0}$ is a circle. So, we can derive a contradiction by using the entropy as the proof of Proposition \ref{prop:non-vanish}. 

In addition, there exists some some constant $C$ depending on $m$ and $\{\varphi_j:\lambda_j<0\}$ such that $\|P_+ \partial_\theta^m  v\|_{L^2 } \leq C \|P_+  v\|_{L^2}$. Since $1\not \in \ker \mathcal{L}$, $C^{-1}\|P_*   v\|_{L^2 } \leq \|P_* \partial_\theta^m v\|_{L^2}\leq C\|P_*   v\|_{L^2 }$ also holds for some constant $C$ depending on $m$ and $\{\varphi_j:\lambda_j=0\}$. Hence,  \eqref{eq:neutral_dominate_I} implies 
\begin{equation} 
 \|P_+ \partial_\theta^m  v\|_{L^2(\mathbb{S}^1)}\leq C \|P_+    v\|_{L^2(\mathbb{S}^1)}= o(\|P_*  v\|_{L^2(\mathbb{S}^1)})=o(\|P_* \partial_\theta^m v\|_{L^2(\mathbb{S}^1)}).
\end{equation}
Therefore, Lemma \ref{lem:derivative_error} yields
\begin{align} 
\|P_-  \partial_\theta^m  v\|_{L^2(\mathbb{S}^1)}+\|P_+  \partial_\theta^m v\|_{L^2(\mathbb{S}^1)}=o( \|P_* \partial_\theta^m v\|_{L^2(\mathbb{S}^1)})=o( \|P_*  v\|_{L^2(\mathbb{S}^1)})
\end{align}
for $m\in \{0,\cdots,9\}$. Hence, combining the Sobolev inequality and \eqref{eq:neutral_dominate_I} leads to
\begin{align}
\|P_-   v\|_{C^8(\mathbb{S}^1)}+\|P_+   v\|_{C^8(\mathbb{S}^1)} =o( \|P_* v\|_{L^2(\mathbb{S}^1)}). 
\end{align}
This completes the proof.
\end{proof}

\section{Radial asymptotic behavior}\label{sec:radial}

In this section, we will show that a rescaled ancient flow with the radial asymptotic behavior converges exponentially fast to the unit circle.  We state the goal of this section below.

\begin{theorem}[fast convergence to the circle]\label{thm:non-exist2}
Suppose that a rescaled ancient solution $\overline{\Gamma}_\tau$ smoothly converges to the unit circle $\Gamma^\alpha_\infty$  as $\tau\to -\infty$. Then, there exist some $C>0$ and $T\ll -1$ such that \eqref{eq:unstable_dominate} and
\begin{equation}
\|v(\cdot,\tau)\|_{C^4(\mathbb{S}^1)}\leq C e^{-\lambda_I\tau}
\end{equation}
hold for $\tau \leq T$, where $1+v(\cdot,\tau)$ is the support function of  $\overline{\Gamma}_\tau$ and $\lambda_I$ is the greatest negative eigenvalue of the operator $\mathcal{L}=\alpha \partial_\theta^2+\alpha+1$.
\end{theorem}

We recall the evolution equation \eqref{eq:v-flow} of $v$ with $h\equiv 1$ that
\begin{equation}\label{eq:v-eq-round}
\partial_\tau v=\mathcal{L}v +E(v),
\end{equation}
where
\begin{equation}
\mathcal{L}v=\alpha v_{\theta\theta}+(\alpha+1) v,
\end{equation}
and
\begin{equation}\label{eq:Error_defintion}
E(v)=-(1+v_{\theta\theta}+v)^{-\alpha}+1-\alpha(v_{\theta\theta}+v).
\end{equation}

\bigskip

To show the exponential decay of $v$, we assume the neutral mode dominance \eqref{eq:neutral_dominate} in Theorem \ref{thm:dichotomoy}, towards a contradiction. First, we should have $\ker \mathcal{L}\neq \emptyset$ unless $v\equiv 0$. Thus, Theorem \ref{thm:sep-L} implies that there exists some integer $k\geq 3$ such that
\begin{equation}\label{eq:power_integer}
\alpha=1/(k^2-1).
\end{equation}
Next, we consider the Fourier expansion of  $v$.
\begin{align}\label{eq:v-fourier}
v(\theta,\tau)= A_0(\tau)+\sum_{m=1}^\infty A_m(\tau)\cos m\theta+B_m(\tau)\sin m\theta,
\end{align}
where 
\begin{align}
    A_0=\frac{1}{2\pi}\iS v(\theta,\tau)d\theta,\quad A_m=\frac{1}{\pi}\iS v\cos m\theta d\theta, B_m=\frac{1}{\pi}\iS v\sin m\theta d\theta.
\end{align}
We also define $\rho(\tau)$ and $Q(\tau)$ by
\begin{align}\label{eq:def_Q}
&\rho=A_k^2+B_k^2, && Q=(A_k^2-B_k^2)A_{2k}+2A_kB_kB_{2k}.
\end{align}

In order to exclude the trivial case $v\equiv 0$, we assume that
\begin{align}\label{eq:non-vanishment}
\lim_{\tau\to -\infty}\rho(\tau)=0,\qquad \text{but}\quad \rho(\tau)>0
\end{align}
holds for negative enough $\tau$. Moreover, to have the neutral mode dominance \eqref{eq:neutral_dominate}, we assume that
\begin{equation}\label{eq:magnitude_derivative}
\tfrac{d}{d\tau}\rho(\tau)=o(\rho),
\end{equation}
and
\begin{equation}\label{eq:neutral_dominate_II}
\|P_-   v\|_{C^8}+\|P_+  v\|_{C^8} =o( \rho^{\frac{1}{2}}). 
\end{equation}
We will verify in the proof of Theorem \ref{thm:non-exist2} that these assumptions are required for a contradiction argument.

For the convenience of notation, only in this section, we abuse the notation of the projection operators in \eqref{def:proj} as $P_0v=A_0$ and $P_mv=A_m\cos m\theta+B_m\sin m\theta$ for $m\in \mathbb{N}^*$. However, we will use the same definition of $P_+$ and $P_-$, namely
\begin{align}
&P_-v=\sum_{m>k}P_mv, && P_+v= \sum^{k-1}_{m=0}P_m v.
\end{align}
Notice that  $P_k$ denotes the projection to neutral space, namely $P_*=P_k$. Notice that $\|P_*v\|_{L^2}^2=\rho$.
Furthermore, we will relabel the eigenvalues of $\mathcal{L}_{\Gamma^\alpha_\infty}$ by 
\begin{equation}
\lambda_l=\alpha(l^2-1)-1,
\end{equation}
with $l\in \mathbb{N}^*\cup\{0\}$ so that $P_l$ denotes the projection to the eigenspace of $\lambda_l$.

In this section, we will show that $P_{2k}v$ and $P_0v$ dominate $P_-v$ and $P_+v$, respectively. Thus, we consider the following projections for convenience.
\begin{align}
&\tilde P_-v:=P_-v-P_{2k}v, && \tilde P_+v:=P_+v-P_0v.
\end{align}

We begin by observing the slow decay property of $\rho$.
\begin{proposition}[slow decay]\label{prop:non_exp.decay-rho}
If we have \eqref{eq:magnitude_derivative} and \eqref{eq:non-vanishment}, then  
\begin{equation}
\lim_{\tau\to -\infty}\rho(\tau) e^{-\delta\tau}=+\infty
\end{equation} 
holds for every $\delta>0$.
\end{proposition}

\begin{proof}
Combining \eqref{eq:magnitude_derivative} and \eqref{eq:non-vanishment} yields $\frac{d}{d\tau}\log \rho =o(1)$. Hence, given $\delta>0$ there exists some $T$ such that $\frac{d}{d\tau}\log \rho <\frac{\delta}{2}$ holds for $\tau \leq T$. Thus,
\begin{equation}
\log (\rho(\tau)e^{-\delta \tau})-C=-\int_{\tau}^T \frac{d}{ds} \log (\rho(s)e^{-\delta s}) ds\geq \int_{\tau}^T \frac{\delta}{2} \, ds,
\end{equation} 
yields the desired result.
\end{proof}

Next, we observe the error $E(v)$.

\begin{lemma}\label{lem:error_expansion}
Under the conditions \eqref{eq:power_integer} and \eqref{eq:neutral_dominate_II}, we have
\begin{align}\label{eq:tv2-2}
  E (v)=-\sum_{j=2}^n\binom{-\alpha}{j} (v_{\theta\theta}+v)^j +O( \rho^{\frac{n+1}{2}})
\end{align} 
for each $n\in \{2,3,4\}$, where 
\begin{equation}
\binom{-\alpha}{n}:= \frac{(-\alpha)(-\alpha-1)\cdots(-\alpha-n+1)}{n!}.
\end{equation}
\end{lemma}

\begin{proof}
Since $v=P_-v+P_kv+P_+v$, the condition \eqref{eq:neutral_dominate_II} yields
\begin{align}
v_{\theta\theta}+v=(\partial_\theta^2+1)P_kv+o(\rho^{\frac12}).
\end{align}
Hence, $(\partial_\theta^2+1)P_kv=-(k^2-1)P_kv=O(\rho^{\frac{1}{2}})$ implies $v_{\theta\theta}+v=O(\rho^{\frac{1}{2}})$. Thus, by using  the definition of $E$ in \eqref{eq:Error_defintion} we can obtain the desired result.
\end{proof}

\begin{lemma}\label{lem:higher_order_error-1}
Under the conditions \eqref{eq:power_integer} and \eqref{eq:neutral_dominate_II}, we have
\begin{align}
  \partial_\theta^m E (v)=-\tfrac12\alpha(\alpha+1)\partial_\theta^m F(\theta,\tau) +o( \rho),
\end{align}
for each $m\in \{ 0,\cdots,6\}$, where
\begin{equation}\label{def:F1}
F(\theta,\tau)=\frac{\rho(\tau)}{2\alpha^2}+\frac{A_k^2(\tau)-B_k^2(\tau)}{2\alpha^2}\cos 2k\theta+\frac{A_k(\tau)B_k(\tau)}{\alpha^2}\sin 2k\theta.
\end{equation}
\end{lemma}

\begin{proof}
The condition \eqref{eq:neutral_dominate_II} and the definition of $\rho$ in \eqref{eq:def_Q} yield
\begin{align}
\partial_\theta^m( v_{\theta\theta}+v)=\partial_\theta^m (\partial_\theta^2+1)P_kv+o(\rho^{\frac12})=O(\rho^{\frac{1}{2}}),
\end{align}
for each $m\leq 6$. Therefore, we have
\begin{align}
\begin{split}\label{ptmE}
\partial_\theta^mE&=-\tfrac{1}{2}\alpha(\alpha+1) \partial_\theta^m( v_{\theta\theta}+v)^2+o(\rho)\\
&=-\tfrac{1}{2}\alpha(\alpha+1)\partial_\theta^m [(\partial_\theta^2+1)P_kv]^2+o(\rho).
\end{split}
\end{align}
Since \eqref{eq:power_integer} implies
\begin{equation}
(\partial_\theta^2+1)P_kv=-\tfrac{1}{\alpha}[A_k\cos k\theta+B_k\sin k\theta],
\end{equation}
we have the desired result.
\end{proof}

\begin{lemma}\label{lem:rho-nonint}
Under the conditions \eqref{eq:power_integer}, \eqref{eq:non-vanishment}, \eqref{eq:magnitude_derivative}, and \eqref{eq:neutral_dominate_II}, we have
\begin{align}
&|A_0|+|A_{2k}|+|B_{2k}|=O(\rho) , &&     \|\tilde P_+v\|_{C^5}+\|\tilde P_-v\|_{C^5}=o(\rho).
\end{align}
\end{lemma}
\begin{proof} 
By using \eqref{eq:v-eq-round} and  Lemma \ref{lem:error_expansion}, we have 
\begin{equation}
\frac{d}{d\tau}A_0= \frac{1}{2\pi}\int v_\tau d\theta =-\lambda_0A_0+O(\rho).
\end{equation}
Thus, Proposition \ref{prop:non_exp.decay-rho} and Lemma \ref{lem:ODE-2} with $\tilde{\rho}=\rho$ imply $|A_0|=O(\rho)$. In the same manner, we have $|A_{2k}|+|B_{2k}|=O( \rho)$.

Next, for each $m\in \{1,\cdots, 6\}$  we calculate
\begin{align}
\tfrac{1}{2}\tfrac{d}{d\tau}\|\partial_\theta^m \tilde P_-v\|^2_{L^2}=&(\partial_\theta^m \tilde P_-v,\partial_\theta^m  v_\tau)_{L^2}=(\partial_\theta^m \tilde P_-v,\mathcal{L} \partial_\theta^m  v+\partial_\theta^m  E (v))_{L^2} \notag\\
\leq& -\lambda_{k+1} \|\partial_\theta^m \tilde P_-v \|^2_{L^2}+\|\partial_\theta^m \tilde P_-v\|_{L^2}\|  \tilde P_-\partial_\theta^m E\|_{L^2}.\label{ptmP-}
\end{align}
Since Lemma \ref{lem:higher_order_error-1} implies $\tilde{P}_- \partial_\theta^m E(v)=o(\rho)$, by using $\lambda_{k+1}>0$ we have
\begin{align}\label{eq:tildeP-}
 \tfrac{1}{\lambda_{k+1}} \tfrac{d}{d\tau}\|\partial_\theta^m \tilde P_-v\|_{L^2}\leq - \|\partial_\theta^m \tilde P_-v \|_{L^2}+o(\rho),
\end{align}
almost everywhere in $\tau$. Therefore, Proposition \ref{prop:non_exp.decay-rho} and Lemma \ref{lem:ODE-1}  with $\tilde{\rho}=\rho$   imply $\|\partial_\theta^m\tilde P_-v\|_{L^2}\leq o(\rho)$. Since $\iS\partial_\theta^{m-1}\tilde P_- vd\theta=0$, the Sobolev embedding theorem yields 
\begin{equation}
\|\tilde{P}_-v\|_{C^5}=o(\rho).
\end{equation}
Similarly, we compute
\begin{align}
\tfrac{1}{2}\tfrac{d}{d\tau}\|\partial_\theta^m \tilde P_+v\|^2_{L^2} \geq  -\lambda_{k-1} \|\partial_\theta^m \tilde P_+v \|^2_{L^2}-\|\partial_\theta^m \tilde P_+v\|_{L^2}\|  \partial_\theta^m E\|_{L^2},
\end{align}
and then by using $\lambda_{k-1}<0$ we have
\begin{align}\label{eq:tP+v}
\tfrac{1}{\lambda_{k-1}} \tfrac{d}{d\tau} \|\partial_\theta^m\tilde P_+v\|_{L^2} \leq - \|\partial_\theta^m\tilde P_+ v\|_{L^2}+o(\rho).
\end{align}
Hence, Proposition \ref{prop:non_exp.decay-rho} and Lemma \ref{lem:ODE-1} with $\tilde{\rho}=\rho$ imply  $\|\partial_\theta^m\tilde P_+v\|_{L^2}=o(\rho)$. Thus, 
\begin{equation}
\|\tilde{P}_+v\|_{C^5}=o(\rho).
\end{equation}
This completes the proof.
\end{proof}

Now we can improve the estimates in Lemma \ref{lem:higher_order_error-1}.
\begin{lemma}\label{lem:higher_order_error}
Under the conditions \eqref{eq:power_integer}, \eqref{eq:non-vanishment}, \eqref{eq:magnitude_derivative}, and \eqref{eq:neutral_dominate_II}, we have
\begin{align}\label{pmtE3/2}
  \partial_\theta^m E (v)=-\tfrac12\alpha(\alpha+1)\partial_\theta^m F(\theta,\tau) +O( \rho^{\frac{3}{2}}),
\end{align}
for each $m\in \{ 0,1,2,3\}$, where $F$ is defined in \eqref{def:F1}.
\end{lemma}

\begin{proof}By using Lemma \ref{lem:rho-nonint}, as in the proof of Lemma \ref{lem:higher_order_error-1} we can obtain
\begin{equation}
\partial_\theta^m( v_{\theta\theta}+v)=-\tfrac{1}{\alpha}\partial_\theta^m [A_k\cos k\theta+B_k\sin k\theta]+O(\rho),
\end{equation}
for each $m\in \{0,1,2,3\}$. Thus, by using \eqref{eq:tv2-2}, we can derive the desired result, as done in \eqref{ptmE}.
\end{proof}

\begin{lemma}\label{lem:P-+}
Under the conditions \eqref{eq:power_integer}, \eqref{eq:non-vanishment}, \eqref{eq:magnitude_derivative}, and \eqref{eq:neutral_dominate_II}, we have
\begin{align}\label{eq:rho1}
\|\tilde P_+v\|_{C^2}+\|\tilde P_-v\|_{C^2}=O(\rho^{\frac{3}{2}}).
\end{align}
\end{lemma}
\begin{proof}
We repeat the computation in the proof of Lemma \ref{lem:rho-nonint} by using \eqref{pmtE3/2} to achieve
\begin{align}
     \tfrac{1}{\lambda_{k+1}} \tfrac{d}{d\tau}\|\partial_\theta^m \tilde P_-v\|_{L^2}\leq  -\|\partial_\theta^m \tilde P_-v \|_{L^2}+O(\rho^{3/2}),\\
     \tfrac{1}{\lambda_{k-1}} \tfrac{d}{d\tau} \|\partial_\theta^m\tilde P_+v\|_{L^2} \leq  -\|\partial_\theta^m\tilde P_+ v\|_{L^2}+O(\rho^{3/2}).
\end{align}
Therefore, Proposition \ref{prop:non_exp.decay-rho} and Lemma \ref{lem:ODE-basic} with $\tilde{\rho}=\rho^{3/2}$ yield  the desired result.
\end{proof}
\begin{proposition}
Under the conditions \eqref{eq:power_integer}, \eqref{eq:non-vanishment}, \eqref{eq:magnitude_derivative}, and \eqref{eq:neutral_dominate_II}, we have
\begin{align}
\tfrac{d}{d\tau} A_0=&-\lambda_0 A_0-\tfrac{\alpha+1}{4\alpha}\rho+O(\rho^2),\label{eq:2A0}\\
\tfrac{d}{d\tau} A_{2k}=&-\lambda_{2k}A_{2k}-\tfrac{\alpha+1}{4\alpha}(A_k^2-B_k^2)+O(\rho^2),\label{eq:2A2k}\\
\tfrac{d}{d\tau} B_{2k}=&-\lambda_{2k}B_{2k}-\tfrac{\alpha+1}{2\alpha }A_kB_k+O(\rho^2),\label{eq:2B2k}
\end{align}
and
\begin{align}
  \tfrac{1}{1+\alpha}\tfrac{d}{d\tau} A_k= &   (A_0-\tfrac{ 2+\alpha   }{8\alpha^2}\rho  )A_k-\tfrac{  4  +3\alpha}{2\alpha}[A_kA_{2k}+B_kB_{2k}] +O(\rho^{\frac52}),\label{eq:Ak}\\
  \tfrac{1}{1+\alpha}  \tfrac{d}{d\tau} B_k= &    (A_0-\tfrac{ 2+\alpha   }{8\alpha^2}\rho  ) B_k-\tfrac{  4+3\alpha }{2\alpha}[A_kB_{2k}-B_kA_{2k}] +O(\rho^{\frac52}).\label{eq:Bk}
\end{align}
\end{proposition}

\begin{proof}
We define $I,J,K,V$ by
\begin{align}
&V=v_{\theta\theta}+v, && I=P_{k}V, && J=(P_0+P_{2k})V, && K=(\tilde{P}_+ +\tilde{P}_-)V.
\end{align}
Then, Lemma \ref{lem:rho-nonint}  and Lemma \ref{lem:P-+} yield
\begin{align}
I&= -\tfrac{1}{\alpha} [A_k\cos k\theta+B_k\sin k\theta]=O(\rho^{\frac{1}{2}}),\\
J&=A_0-\tfrac{4+3\alpha}{\alpha} [A_{2k}\cos 2k\theta+B_{2k}\sin 2k\theta]=O(\rho),\\
K&=(\partial_\theta^2+1)(\tilde{P}_+ v +\tilde{P}_-  v)=O(\rho^{\frac{3}{2}}).
\end{align}
Notice that we used  $k^2=\alpha^{-1}+1$.  Thus, $V=I+J+K$ yields
\begin{equation}
\frac{1}{2\pi}\iS V^2d\theta=\frac{1}{2\pi}\iS (I^2+2IJ)d\theta+O(\rho^2)=\frac{\rho}{2\alpha^2}+O(\rho^2),
\end{equation}
and
\begin{equation}
\frac{1}{2\pi}\iS V^3 d\theta=\frac{1}{2\pi}\iS I^3d\theta+O(\rho^2)=O(\rho^2).
\end{equation}
Since $\frac{d}{d\tau} A_0(\tau)=\frac{1}{2\pi}\iS v_\tau d\theta$, combining the identities above with Lemma \ref{lem:error_expansion}, we get 
\begin{align}
\frac{d}{d\tau} A_0=-\lambda_0A_0+ \frac{1}{2\pi}\iS  E d\theta =-\lambda_0 A_0-\binom{-\alpha}{2}\frac{\rho}{2\alpha^2}+O(\rho^2)
\end{align}
which yields the first equation \eqref{eq:2A0}.

Similarly, we can calculate
\begin{align}
\frac{1}{\pi}\iS V^2\cos 2k\theta d\theta &=\frac{A_k^2-B_k^2}{2\alpha^2}+O(\rho^2),\\
\frac{1}{\pi}\iS V^2\sin 2k\theta d\theta &= \frac{A_k B_k}{\alpha^2}+O(\rho^2),
\end{align}
and
\begin{equation}
 \iS V^3\cos 2k\theta d\theta= \iS V^3\sin 2k\theta d\theta=O(\rho^2).
\end{equation}
Thus, we obtain  \eqref{eq:2A2k} and \eqref{eq:2B2k} in the same manner.

\bigskip

To derive equation \eqref{eq:Ak}, we observe
\begin{equation}
\iS I^2\cos k\theta d\theta=\iS J^2\cos k\theta d\theta=\iS IK\cos k\theta d\theta=0.
\end{equation}
We notice that we obtain the last identity above by using 
\begin{equation}
I\cos k\theta \in \text{span}\{1,\cos 2k\theta,\sin 2k\theta\}.
\end{equation}
Therefore, we have
\begin{align}
\begin{split}
\frac1\pi\iS V^2\cos k\theta d\theta&=\frac2\pi\iS IJ\cos k\theta d\theta+O(\rho^{\frac52})\\
&=-\tfrac{2}{\alpha}  A_0A_k+\tfrac{ 4+3\alpha}{\alpha^2} [A_kA_{2k}+B_kB_{2k}]+O(\rho^{\frac52}).
\end{split}
\end{align}
Also, we can obtain
\begin{align}
\frac1\pi\iS V^3\cos k\theta d\theta &=\frac1\pi\iS(I^3+3I^2J)\cos k\theta d\theta+O(\rho^{\frac{5}{2}})=-\frac{3}{4\alpha^3} A_k\rho +O(\rho^{\frac52}),
\end{align}
and
\begin{align}
 \iS V^4\cos k\theta d\theta = \iS I^4\cos k\theta d\theta+O(\rho^{\frac{5}{2}})=O(\rho^{\frac{5}{2}}).
\end{align}
Thus, combining the identities above with Lemma \ref{lem:error_expansion} implies \eqref{eq:Ak}. Similarly, we can obtain
\begin{align}
 \frac1\pi\iS V^2\sin k\theta d\theta &=  -\tfrac{2}{\alpha}  A_0B_k+\tfrac{ 4+3\alpha}{\alpha^2} [A_kB_{2k}-B_kA_{2k}]+O(\rho^{\frac52}), \notag\\
 \frac1\pi\iS V^3\sin k\theta d\theta &=-\tfrac{3}{4\alpha^3} B_k\rho +O(\rho^{\frac52}),\\
 \iS V^4\sin k\theta d\theta &=O(\rho^{\frac{5}{2}}). \notag
\end{align}
Therefore, we have the last equation \eqref{eq:Bk}.
\end{proof}

\bigskip

\begin{proposition}
Under the conditions \eqref{eq:power_integer}, \eqref{eq:non-vanishment}, \eqref{eq:magnitude_derivative}, and \eqref{eq:neutral_dominate_II}, we have
\begin{align}
\tfrac{1}{1+\alpha} \tfrac{d}{d\tau} \rho=(2A_0-\tfrac{2+\alpha }{4\alpha^2}\rho)\rho- \tfrac{4+3\alpha}{\alpha} Q +O(\rho^3)\label{eq:rho}
\end{align}
and the quantity $Q=(A_k^2-B_k^2)A_{2k}+2A_kB_kB_{2k}$ in \eqref{eq:def_Q} satisfies 
\begin{align}\label{eq:Q}
\tfrac{1}{1+\alpha} \tfrac{d}{d\tau} Q=(2A_0-\tfrac{ \alpha+2 }{4\alpha^2}\rho) Q-  \tfrac{4+3\alpha}{\alpha}\rho[A_{2k}^2+B_{2k}^2]-\tfrac{\lambda_{2k}}{1+\alpha} Q-\tfrac{ 1}{4\alpha}\rho^2+O(\rho^3 ).
\end{align}
\end{proposition}

\begin{proof}
For the purpose of brevity, we define the operator
\begin{equation}
D_m=\tfrac{1}{1+\alpha}\tfrac{d}{d\tau}-m(A_0-\tfrac{ 2+\alpha   }{8\alpha^2}\rho  )
\end{equation}
for $m=1,2$. Then, we can rewrite \eqref{eq:Ak} and \eqref{eq:Bk} as follows.
\begin{align}
  D_1 A_k= &    -\tfrac{  4+3\alpha }{2 \alpha}[A_kA_{2k}+B_kB_{2k}] +O(\rho^{\frac52}), \label{eq:A_k-re}\\
  D_1 B_k= &    -\tfrac{  4+3\alpha }{2\alpha}[A_kB_{2k}-B_kA_{2k}] +O(\rho^{\frac52}). \label{eq:B_k-re}
\end{align}
Therefore, we can obtain the first equation \eqref{eq:rho} from the following
\begin{align}
D_2\rho= D_2 (A_k^2+B_k^2)= - \tfrac{4+3\alpha}{\alpha}Q +O(\rho^3). 
\end{align}

Next, by using \eqref{eq:A_k-re} and \eqref{eq:B_k-re}, we have
\begin{align}
D_2(A_k^2-B_k^2)&= -\tfrac{4+3\alpha}{\alpha}\rho A_{2k} +O(\rho^3),\\
D_2(A_kB_k)&= -\tfrac{4+3\alpha}{2\alpha}\rho B_{2k}+O(\rho^3).
\end{align}
Thus, combing the equations above with \eqref{eq:2A2k} and \eqref{eq:2B2k} yields 
\begin{align}
D_2 Q=-\tfrac{4+3\alpha}{\alpha} \rho (A_{2k}^2+B_{2k}^2)+\tfrac{1}{1+\alpha} [ (A_k^2-B_k^2)A_{2k}'+2A_kB_kB_{2k}' ] +O(\rho^3).
\end{align}
This completes the proof.
\end{proof}

\begin{proposition}\label{prop:Q-value}
Under the conditions \eqref{eq:power_integer}, \eqref{eq:non-vanishment}, \eqref{eq:magnitude_derivative}, and \eqref{eq:neutral_dominate_II}, we have
\begin{align}
&\tfrac{d}{d\tau}\rho=O(\rho^2), & Q=-\tfrac{k^2-1}{12}\rho^2+O(\rho^3).
\end{align}
\end{proposition}
\begin{proof}
Combining Lemma \ref{lem:rho-nonint} and \eqref{eq:Q} yields
\begin{equation}\label{eq:Q-equation-1}
\tfrac{d}{d\tau} Q=-\lambda_{2k}Q-\tfrac{\alpha+1}{4\alpha}\rho^2+O(\rho Q)+O(\rho^3).
\end{equation}
Since we have $Q=o(1)$ and $\rho=o(1)$, we can reduce \eqref{eq:Q-equation-1} to
\begin{equation}
 \tfrac{1}{\lambda_{2k}} \tfrac{d}{d\tau} Q= -Q +o(\rho).
\end{equation}
Hence, Proposition \ref{prop:non_exp.decay-rho} and Lemma \ref{lem:ODE-1} with $f=\rm Q$ and $\tilde{\rho}=\rho$ yields $\pm Q\leq o(\rho)$, namely $Q=o(\rho)$. Therefore, \eqref{eq:Q-equation-1} implies
\begin{equation}
   \tfrac{d}{d\tau} Q= -\lambda_{2k} Q +O(\rho^2).
\end{equation}
Thus, Proposition \ref{prop:non_exp.decay-rho} and Lemma \ref{lem:ODE-2} with  $\tilde{\rho}=\rho^2$ yields $Q=O(\rho^2)$. Then,  Lemma \ref{lem:rho-nonint} and  \eqref{eq:rho}  lead to $ \frac{d}{d\tau} \rho=O(\rho^2)$. Combining these facts with the equation of $Q$ above yields
\begin{equation}
 \tfrac{d}{d\tau}(Q \rho^{-2})=-  \lambda_{2k}Q \rho^{-2}-\tfrac{\alpha+1}{4\alpha }   +O(\rho ).
\end{equation}
Hence,  Proposition \ref{prop:non_exp.decay-rho} and Lemma \ref{lem:ODE-2} with $f=Q \rho^{-2}+\tfrac{\alpha+1}{4\alpha\lambda_{2k}}$ imply
\begin{equation}
Q\rho^{-2}+\tfrac{\alpha+1}{4 \alpha \lambda_{2k}}=O(\rho).
\end{equation}
Therefore, combining with $\lambda_{2k}=\alpha(4k^2-1)-1=4+3\alpha-1=3(\alpha+1)$ and $\alpha^{-1}=k^2-1$ yields the desired result.
\end{proof}

\bigskip

\begin{proposition}\label{prop:neutral-contradiction}
There exists no smooth ancient solution $v$ to \eqref{eq:v-eq-round} satisfying the conditions \eqref{eq:power_integer}, \eqref{eq:non-vanishment}, \eqref{eq:magnitude_derivative}, and \eqref{eq:neutral_dominate_II}.
\end{proposition}

\begin{proof}
We begin by combining   \eqref{eq:2A0}, Lemma \ref{prop:Q-value}, and Lemma \ref{lem:rho-nonint} to obtain
\begin{equation}
\tfrac{d}{d\tau}(A_0\rho^{-1})=-\lambda_0A_0\rho^{-1}-\tfrac{\alpha+1}{4\alpha}+O(\rho).
\end{equation}
Thus, combining Proposition \ref{prop:non_exp.decay-rho}, Lemma \ref{lem:ODE-2} with $f=A_0 \rho^{-1}+\tfrac{\alpha+1}{4\alpha\lambda_{0}}$, $\lambda_0=-\alpha-1$, and $\alpha^{-1}=k^2-1$ yields 
\begin{equation}
A_0=-\tfrac{\alpha+ 1}{4\alpha \lambda_0 }\rho+O(\rho^2)=\tfrac{k^2-1}{4}\rho+O(\rho^2).
\end{equation}
Hence, \eqref{eq:rho} and Proposition \ref{prop:Q-value} imply
\begin{align}
\tfrac{\alpha}{1+\alpha} \tfrac{d}{d\tau} \rho=\tfrac12 \rho^2- \tfrac{2+\alpha }{4\alpha} \rho^2+ \tfrac{4+3\alpha}{12\alpha} \rho^2 +O(\rho^3)=\left(\tfrac12 -\tfrac{1}{6\alpha}\right)\rho^2+O(\rho^3).
\end{align}
Therefore, $\alpha^{-1}=k^2-1$ and $k\geq 3$ yield
\begin{align}
\tfrac{1}{k^2}\tfrac{d}{d\tau} \rho=\left(\tfrac12 -\tfrac{k^2-1}{6}\right)\rho^2+O(\rho^3)\leq -\tfrac{5}{6}\rho^2+O(\rho^3).
\end{align}
Thus, there exists some $T\ll -1$ satisfying $ \frac{d}{d\tau} \rho \leq 0$ for $\tau\leq T$, namely $\rho(\tau) \geq \rho(T)$ holds for $\tau \leq T$. This contradicts the condition \eqref{eq:non-vanishment}.
\end{proof}

\bigskip

\begin{proof}[Proof of Theorem \ref{thm:non-exist2}]
We may assume that $v$ is not identically zero. Towards a contradiction, we suppose that \eqref{eq:neutral_dominate} holds. Then, Proposition \ref{prop:non-vanish} implies that $\|P_*v\|_{L^2}>0$ holds for sufficiently negative $\tau$, and thus $\ker \mathcal{L}\neq \emptyset$. Therefore, Theorem \ref{thm:sep-L} says that \eqref{eq:power_integer} holds for an integer $k\geq 3$.

Since $\rho=\|P_*v\|_{L^2}^2$, we have \eqref{eq:non-vanishment} by Proposition \ref{prop:non-vanish}. In addition, \eqref{eq:neutral_dominate} already guarantees \eqref{eq:magnitude_derivative} and \eqref{eq:neutral_dominate_II}. This contradicts Proposition \ref{prop:neutral-contradiction}. 

Now, by Theorem \ref{thm:dichotomoy} we have the unstable dominance \eqref{eq:unstable_dominate} 
\begin{equation}
\|P_-   v\|_{L^2(\mathbb{S}^1)}+\|P_*   v\|_{L^2(\mathbb{S}^1)}\leq C \|v\|_{C^4(\mathbb{S}^1)} \|P_+ v\|_{L^2(\mathbb{S}^1)}
\end{equation}
for negative enough $\tau$. So, Proposition \ref{prop:non-vanish} implies $\|P_+ v\|_{L^2}\geq (1-o(1))\|v\|_{L^2}>0$. Thus, we can divide the inequality \eqref{eq:diff-ieq-P-} by $\|P_+ v\|_{L^2}^2$ so that we have
\begin{equation}
\tfrac{d}{d\tau}\log \|P_+ v\|_{L^2(\mathbb{S}^1)}  \geq  - \lambda_I  - C\|  v\|_{C^4(\mathbb{S}^1)}.
\end{equation}
Hence, we have $\frac{d}{d\tau}\log \|P_+ v\|_{L^2}  \geq  - \frac{1}{2} \lambda_I$ for sufficiently negative $\tau$. This yields 
\begin{equation}
\|P_+v\|_{L^2 }\leq Ce^{-\frac{1}{2}\lambda_I \tau},
\end{equation}
and consequently  $\|v\|_{L^2}\leq Ce^{-\frac{1}{2}\lambda_I \tau}$. Now, we remind that \eqref{eq:v-eq-round} can be considered as a linear equation
\begin{equation}\label{eq:linear_eq_b}
    v_\tau=(\alpha + b)v_{\theta\theta}+(\alpha+1+b) v,
\end{equation}
where $\|b\|_{C^3} =o(1)$. Therefore, the standard interior regularity theory for parabolic PDEs leads to $\|v\|_{C^4}\leq Ce^{-\frac{1}{2}\lambda_I \tau}$. Thus, by using  
\begin{equation}
\tfrac{d}{d\tau}\log \|P_+ v\|_{L^2(\mathbb{S}^1)}  \geq  - \lambda_I  - Ce^{-\frac{1}{2}\lambda_I \tau},
\end{equation}
we can obtain 
\begin{equation}
\|v\|_{L^2(\mathbb{S}^1)} \leq (1+o(1))\|P_+v\|_{L^2(\mathbb{S}^1)}\leq Ce^{-\lambda_I \tau}.
\end{equation}
Therefore, we apply the interior parabolic regularity theorems to the linear equation \eqref{eq:linear_eq_b} so that we complete the proof.
\end{proof}

\bigskip

\section{Non-radial asymptotic behavior}\label{sec:non-radial}

In this section, we will show that a rescaled ancient flow asymptotic to $k$-fold shrinkers converges exponentially fast.

We recall the kernel $\ker \mathcal{L}_{\Gamma^\alpha_k}=\text{span}\{h_\theta\}$ from Theorem \ref{thm:sep-L}. Since $h_\theta$ corresponds to the rotation of $\Gamma^{\alpha}_k$, the shrinker $\Gamma^{\alpha}_k$ is  an \textit{integrable critical point} of the entropy $\mathcal{E}_\alpha$. This notion of integrability and the fast convergence were pioneered by \citet{allard1981radial} in their study of tangent cones of minimal surfaces with isolated singularities.

\begin{definition}[$\varepsilon$-shrinking flow]
We say that a rescaled ancient flow $\overline{\Gamma}_\tau $ with the support function $\bar u(\theta,\tau) $  is $\varepsilon$-close to $\Gamma^{\alpha}_k$ with the support function $h$ (up to rotation) at time $\tau_0$ if there exists some $\theta_0\in [0,2\pi)$ such that  
\begin{align}
\|\bar u_{\theta_0}(\cdot,\tau)-h \|_{C^{4,\beta}(\mathbb{S}^1)}\leq \varepsilon,
\end{align}
holds for $\tau\in [\tau_0-1,\tau_0+1]$, where $\bar u_{\theta_0}(\theta,\tau)=\bar u(\theta_0+\theta,\tau)$ and $\beta=\frac{1}{2}$.
\end{definition}

\bigskip

\begin{lemma}\label{lem:half}
Suppose that a rescaled flow $\overline{\Gamma}_\tau$ converges to the shrinker $\Gamma_k^\alpha$ with $k\geq 3$ up to rotations as $\tau\to-\infty $  in $C^{4,\beta}$-sense where $\beta=\frac{1}{2}$. Then, there exist some $L\gg 1$, $T\gg -1$, and $0<\varepsilon_0 \ll 1$ such that if $\bar u(\cdot,\tau)$ is $\varepsilon$-close to $\Gamma^{\alpha}_k$ at every $\tau\leq T_0$ for some $ \varepsilon\leq \varepsilon_0$ and $T_0\leq T$, then 
 $\bar u(\cdot,\tau)$ is $\frac{\varepsilon}{2}$-close to $\Gamma^{\alpha}_k$ at every $\tau\leq T_0-L$.
\end{lemma}
\begin{proof}
By our assumption $\bar u$ is $\varepsilon$-close to $\Gamma^{\alpha}_k$ up to rotations for $\tau\leq T_0$, namely there exist some $\theta_0,\theta_1$ such that
\begin{align}
    \|\bar u(\cdot,s)-h_{-\theta_0} \|_{C^{4,\beta}}<\varepsilon,\quad s\in[\tau-1,\tau+1],\\
    \|\bar u(\cdot,s)-h_{-\theta_1} \|_{C^{4,\beta}}<\varepsilon,\quad s\in[\tau-3,\tau-1],
\end{align}
where $h_{-\theta_i}(\theta)=h(\theta-\theta_i)$  and $h$ is the support function of $\Gamma_k^\alpha$. This means
\begin{align}
    \|h_{-\theta_0}-h_{-\theta_1}\|_{C^{4,\beta}}<2\varepsilon,
\end{align}
and thus we have
\begin{align}
    \|\bar u(\cdot,s)-h_{-\theta_0}\|_{C^{4,\beta}}<3\varepsilon,\quad s\in[\tau-3,\tau+1].
\end{align}
Since $\|\bar u(\cdot,s)-h_{-\theta_0}\|_{C^{4,\beta}}=\|\bar u_{\theta_0}(\cdot,s)-h\|_{C^{4,\beta}}$, iterating this process yields that given integer $L\gg 1$ and $v_{\theta_0}:=\bar u_{\theta_0}-h$,
\begin{equation}\label{eq:v_theta_0-rough_bound}
\|v_{\theta_0}(\cdot,s)\|_{C^{4,\beta}}<(1+2L)\varepsilon,
\end{equation}
holds for $s\in[\tau-1-2L,\tau+1]$.  We denote  $x=\|P_*v_{\theta_0}\|_{h}^2$,  $y=\|P_-v_{\theta_0}\|_{h}^2$, and $z=\|P_+v_{\theta_0}\|_{h}^2$. Then, by Lemma \ref{lem:MZ_v}  we have 
 \begin{align}
 \begin{split}
 z'-2\lambda z\geq -C\sigma(x+y),\\
 |x'|\leq C\sigma(x+y+z),\\
 y' +2\lambda y\leq C\sigma(x+z),
 \end{split}
 \end{align}
for sufficiently negative $\tau$, where $\lambda=\min\{|\frac{1}{2}\lambda_j|:\lambda_j\neq 0\}$ and $\sigma(\tau)=\|v_{\theta_0}(\cdot,\tau)\|_{C^{4,\beta}}$. Notice that we used the fact $\sigma(\tau) \ll \lambda$ for $\tau\ll -1$. Therefore, combining  \eqref{eq:v_theta_0-rough_bound} and Lemma \ref{lem:bdd-MZ} yields 
\begin{align} 
\|P_+v_{\theta_0}(\cdot,s)\|_{h}+\|P_-v_{\theta_0}(\cdot,s)\|_{h}\leq &CL\varepsilon \|P_*v_{\theta_0}(\cdot,s)\|_{h}+C e^{-\frac{\lambda}{8}L}\varepsilon,
\end{align}
for $s\in [\tau-L-2,\tau-L+1]$. Since $\|P_*v_{\theta_0} \|_{h}\leq \| v_{\theta_0} \|_{h}\leq \| v_{\theta_0} \|_{C^{4,\beta}}$,  
\begin{align}\label{eq:+-0}
\|P_+v_{\theta_0}(\cdot,s)\|_{h}+\|P_-v_{\theta_0}(\cdot,s)\|_{h}\leq CL^2\varepsilon^2+CLe^{-\frac{\lambda}{8}L}\varepsilon,
\end{align}
holds for $s\in [\tau-L-2,\tau-L+1]$. 

\bigskip

Now, we want to rotate $\overline{\Gamma}_{\tau-L}$ a little bit so that the neutral mode of the rotated curve is small. We define
\begin{equation}
\theta_0'=\theta_0+(v_{\theta_0}(\cdot,\tau-L),h_\theta)_h \|h_\theta\|_{h}^{-2}.
\end{equation}
We recall $\ker\mathcal{L}_{\Gamma^{\alpha}_k}=\text{span}\{h_\theta\}$, which implies
\begin{equation}\label{eq:neutral.proj_rotation}
P_*v_{\theta_0}(\cdot,\tau-L)=\left(v_{\theta_0}(\cdot,\tau-L),\tfrac{h_\theta}{\|h_\theta\|_{h}} \right)_h\tfrac{h_\theta}{\|h_\theta\|_{h}}=(\theta_0-\theta_0')h_\theta.
\end{equation}
On the other hand, \eqref{eq:v_theta_0-rough_bound} yields
\begin{equation}\label{eq:angle-correction}
|\theta_0'-\theta_0|\leq \|v_{\theta_0}(\cdot,\tau-L)\|_{h}\leq CL\varepsilon.
\end{equation}
Now, given $\theta_1\in [0,2\pi)$ and $s\in [\tau-L-2,\tau-L+1]$, we have
\begin{align}\label{baru'}
\begin{split}
&\left|\left(\bar u_{\theta_0'}(\theta_1  ,s)-\bar u_{\theta_0 }(\theta_1  ,s)\right)-\left(h(\theta_1+\theta_0'-\theta_0)-h(\theta_1)\right)\right|\\
&=\left|\int_{\theta_1 }^{\theta_1+\theta_0'-\theta_0}\tfrac{\partial}{\partial \theta} [\bar u_{\theta_0}(\theta ,s)-h(\theta)]d\theta \right| \leq  |\theta_0'-\theta_0|\|v_{\theta_0}(\cdot,s)\|_{C^1}.
\end{split}
\end{align}
Moreover, we have
\begin{align}
&\left( h(\theta_1+\theta_0'-\theta_0)-h(\theta_1)\right)-(\theta_0'-\theta_0)h_\theta(\theta_1)\\
&=\int_{\theta_1}^{\theta_1+\theta_0'-\theta_0}h_\theta(\theta)-h_\theta(\theta_1) d\theta =\int_{\theta_1}^{\theta_1+\theta_0'-\theta_0}\int^{\theta}_{\theta_1} h_{\theta\theta}(\omega) d\omega d\theta,
\end{align}
which implies
\begin{equation}
\left|\left(h(\theta_1+\theta_0'-\theta_0)-h(\theta_1)\right)-(\theta_0'-\theta_0)h_\theta(\theta_1)\right| \leq  \|h\|_{C^2}|\theta_0-\theta_0'|^2.
\end{equation}
Therefore, combining the above inequalities  with \eqref{eq:v_theta_0-rough_bound}, \eqref{eq:angle-correction}, and \eqref{baru'} implies
\begin{align}
\|\bar u_{\theta_0'}(\cdot ,s)-\bar u_{\theta_0 }(\cdot ,s)-(\theta_0'-\theta_0)h_\theta\|_{L^{\infty}}\leq CL^2\varepsilon^2.
\end{align}
Since $\ker \mathcal{L}_{\bar\Gamma_k^\alpha}=\{h_\theta\}$ implies $P_+h_\theta=P_-h_\theta=0$, combining the inequality above with $\bar u_{\theta_0'}-\bar u_{\theta_0 } =v_{\theta_0'} -v_{\theta_0 }$ yields
\begin{equation}
\|(P_++P_-)\left(v_{\theta_0}(\cdot,s)- v_{\theta_0'}(\cdot,s)\right)\|_{h}\leq CL^2\varepsilon^2,
\end{equation}
Thus, by \eqref{eq:+-0}, for $s\in [\tau-L-2,\tau-L+1]$ we obtain
\begin{align}\label{eq:+-0'}
\|P_+v_{\theta_0'}(\cdot,s)\|_{h}+\|P_-v_{\theta_0'}(\cdot,s)\|_{h}\leq C_0L^2\varepsilon^2+C_0Le^{-\frac{\lambda}{8}L}\varepsilon.
\end{align}
In the same manner, by using $P_*h_\theta=h_\theta$ we have
\begin{align}
\| P_*( v_{\theta_0}(\cdot,s)-  v_{\theta_0'}(\cdot,s)) -(\theta_0'-\theta_0) h_\theta \|_h\leq CL^2\varepsilon^2,
\end{align}
for $s\in [\tau-L-2,\tau-L+1]$. Hence, \eqref{eq:neutral.proj_rotation} implies 
\begin{equation}\label{eq:P_0(t-L)}
 \|  P_*v_{\theta_0'}(\cdot,\tau-L)  \|_h\leq CL^2\varepsilon^2.
\end{equation}
Let $\hat x(s)= \|  P_*v_{\theta_0'}(\cdot,s)  \|_h^2$. We recall from Lemma \ref{lem:MZ_v} that
\begin{equation}
|\hat x'|\leq C\|v_{\theta_0'}\|_{C^4} \|v_{\theta_0'}\|_h^2,
\end{equation}
and thus \eqref{eq:v_theta_0-rough_bound} yields
\begin{equation}
|\hat x'| \leq CL\epsilon\left(\hat x+\|P_+v_{\theta_0'} \|_{h}^2+\|P_-v_{\theta_0'}\|_{h}^2\right).
\end{equation}
Thus, \eqref{eq:+-0'} implies
\begin{equation}
|\hat x'|\leq CL\varepsilon (\hat x+ L^4\varepsilon^4+L^2e^{-\frac{\lambda}{4}L}\varepsilon^2),
\end{equation}
namely 
\begin{equation}
\left|\tfrac{d}{ds}\log (\hat x+ L^4\varepsilon^4+L^2e^{-\frac{\lambda}{4}L}\varepsilon^2) \right|\leq CL\varepsilon
\end{equation}
holds for $s\in [\tau-L-2,\tau-L+1]$. Therefore, by using \eqref{eq:P_0(t-L)} we integrate the inequality above from $\tau-L$ to $s$ so that we have
\begin{equation}
\|  P_*v_{\theta_0'}(\cdot,s)  \|_h^2\leq e^{CL\varepsilon}(CL^4\varepsilon^4+ L^2e^{-\frac{\lambda}{4}L}\varepsilon^2)
\end{equation}
for $s\in [\tau-L-2,\tau-L+1]$. Hence, combining with \eqref{eq:+-0'} gives
\begin{equation}\label{eq:L2_bound_rot}
\|   v_{\theta_0'}(\cdot,s) \|_h\leq C(1+e^{CL\varepsilon})(L^2\varepsilon^2+ Le^{-\frac{\lambda}{8}L}\varepsilon )
\end{equation}
for $s\in [\tau-L-2,\tau-L+1]$.  Since \eqref{eq:v-flow} can be written as
\begin{equation}
    \partial_\tau v_{\theta_0'}=-( ( v_{\theta_0'})_{\theta\theta}+v_{\theta_0'})^{-\alpha}+(h_{\theta\theta}+h)^{-\alpha}+v_{\theta_0'},
\end{equation}
by \eqref{eq:v_theta_0-rough_bound} with $L\ll \varepsilon^{-1}$,  there are a smooth function $a(\theta,\tau)$ and some constant $\Lambda>0$ only depending on $\alpha, h$ such that $ \| a\|_{C^{2,\beta}}\leq \Lambda$, $a\geq \Lambda^{-1}$, and
\begin{equation}
\tfrac{\partial}{\partial \tau} v_{\theta_0'}=a \tfrac{\partial^2}{\partial \theta^2}  v_{\theta_0'} +(a+1)v_{\theta_0'},    
\end{equation}
hold for $\tau \in [\tau-1-2L,\tau+1]$.  Therefore, the $L^2$ bound \eqref{eq:L2_bound_rot} and  the standard interior regularity theory for linear parabolic PDEs yield
\begin{equation}
\|   v_{\theta_0'}(\cdot,s) \|_{C^{4,\beta}}\leq C(1+e^{CL\varepsilon})(L^2\varepsilon+ Le^{-\frac{\lambda}{8}L})\varepsilon,
\end{equation}
for $s\in [\tau-L-1,\tau-L+1]$. Therefore, by choosing large enough $L$ and small enough $\varepsilon_0$ we can obtain the desired result that
\begin{equation}
\|   v_{\theta_0'}(\cdot,s) \|_{C^{4,\beta}}\leq \tfrac12\varepsilon,
\end{equation}
for $s\in [\tau-L-1,\tau-L+1]$.
\end{proof}

\bigskip

\begin{theorem}\label{thm:k-exp} Suppose that the rescaled flow $\overline{\Gamma}_\tau$ converges to the shrinker $\Gamma_k^\alpha$ with $k\geq 3$ up to rotations as $\tau\to-\infty $  in $C^{4,\beta}$-sense, where $\beta=\frac{1}{2}$. Then, there exist some $\delta>0$, $T\ll -1$, and a fixed rotation $S_0\in SO(2)$ such that
\begin{equation}
\|\bar u(\cdot,\tau)-\bar h\|_{C^{4,\beta}}\leq e^{\delta\tau}
\end{equation}
holds for $\tau \leq T$, where $\bar u(\cdot,\tau)$ and $\bar h$ are the support functions of  $\overline{\Gamma}_\tau$ and $S_0\Gamma^{\alpha}_k$, respectively.
\end{theorem}

\begin{proof}
We recall $L,\varepsilon_0,T$ in Lemma \ref{lem:half} and choose $T_0\leq T$ such that $\bar u(\cdot,\tau)$ is $\varepsilon_0$-close to $\Gamma^{\alpha}_k$ up to rotations at every $\tau \leq T_0$. Then, we apply Lemma \ref{lem:half} repeatedly so that we can obtain a sequence $\{\theta_i\}_{i\in \mathbb{N}}$ satisfying 
\begin{equation}
\|\bar u(\cdot,\tau)-h_{-\theta_i}\|_{C^{4,\beta}}\leq 2^{-i}\varepsilon_0
\end{equation}
holds for $ \tau\leq \tau_0-iL$, where $h_{\mu}(\theta):=h(\theta+\mu)$.  Because the symmetry of $h$, the angle $\theta_i$ is determined up to $2\pi/k$. Hence, $h_{-\theta_i}$ is a Cauchy sequence in $C^{4,\beta}$-sense as follows;
\begin{equation}
\|h_{-\theta_i}-h_{-\theta_{i+j}}\|_{C^{4,\beta}}\leq \sum_{l=i}^{i+j}2^{-l}\varepsilon_0\leq 2^{-i+1}\varepsilon_0.
\end{equation}
Thus, modulo $2\pi/k$, $\{\theta_i\}_{i\in \mathbb{N}}$ has the limit $\bar{\theta}$ and we have
\begin{equation}
\|\bar u(\cdot,\tau)-h_{-\bar \theta}\|_{C^{4,\beta}}\leq 2^{-i+2}\varepsilon_0,
\end{equation}
for $ \tau\leq \tau_0-iL$.  Choosing $\delta:=(2L)^{-1}\log 2$ and applying the inequality above for $\tau \in [\tau_0-(i+1)L,\tau_0-iL]$ yield
\begin{equation}
\|\bar u(\cdot,\tau)-{ h_{-\bar \theta}}\|_{C^{4,\beta}}\leq 2^{-i+2}\varepsilon_0=2\varepsilon_0 e^{-2(i+1)L\delta} \leq 2\varepsilon_0 e^{2\delta(\tau-\tau_0)}.
\end{equation}
Therefore, by choosing negative enough $T$ we complete the proof.
\end{proof}

\section{Classification of ancient solutions}\label{sec:classification}

\subsection{Summary for small powers}
 To begin with, we summarize the results in previous sections to show the exponential convergence of ancient rescaled flows with small powers.

\begin{proposition}[]\label{prop:unique_tangent.flow}
Let $\overline{\Gamma}_\tau$ be a rescaled closed smooth ancient $\alpha$-curve shortening flow with $\alpha\in (0,\frac{1}{3})$. Then, there exists a fixed rotation $S\in SO(2)$ such that $S\overline{\Gamma}_\tau$ converges exponentially fast to a shrinker $\Gamma_k^\alpha$ in the $C^{10}$-topology as $\tau \to -\infty$, where $k\in \{\infty\}\cup \{3,4,\cdots\}$. Moreover, there exist some constants $C,\delta>0$ and negative time $T$ such that the difference $v=\bar u(\cdot,\tau)-h$  satisfies
\begin{equation}\label{eq:exp_decay}
\|v\|_{C^4(\mathbb{S}^1)}\leq Ce^{\delta\tau}
\end{equation}
for $\tau \leq T$, where $\bar u(\cdot,\tau)$ and $h$ are the support functions of $S\overline{\Gamma}_\tau$ and $\Gamma_k^\alpha$, respectively.  
\end{proposition}

\begin{proof}
Combining Theorem \ref{thm:ent-limit} and Theorem \ref{thm:k-exp} yields the convergence of the rescaled flow $\overline{\Gamma}_\tau$ to a shrinker. So, there are a rotation $S$ and $ k\in \{\infty\}\cup \{3,4,\cdots\} $ such that $S\overline{\Gamma}_\tau$ converges to $\Gamma_k^\alpha$. Then, Theorem \ref{thm:non-exist2} and Theorem \ref{thm:k-exp} imply the exponential convergence of $\bar u(\cdot,\tau)$ to $h$.
\end{proof}

We recall that the difference $v=\bar u-h$ satisfies  \eqref{eq:v-flow}
\begin{equation}\label{eq:v-flow2}
v_\tau=\mathcal{L}v+E(v),
\end{equation}
where
\begin{align}\label{def:E2}
    E(v)=-{(h^{-\frac{1}{\alpha}}+v_{\theta\theta}+v)^{-\alpha}}+h-\alpha h^{1+\frac{1}{\alpha}}(v_{\theta\theta}+v).
\end{align}
We also recall the eigenfunctions $\{\varphi_i\}_{i\in\mathbb{N}}$ and the eigenvalues $\{\lambda_i\}_{i\in\mathbb{N}}$ such that $(\varphi_i,\varphi_j)_h=\delta_{ij}$, $\text{span}\{\varphi_i\}_{i\in\mathbb{N}}=L^2_h$, $\mathcal{L}\varphi_i+\lambda_i\varphi_i=0$, $\lambda_i\leq \lambda_{i+1}$, and $\lim_{i\to +\infty}\lambda_i=+\infty$. Finally, we recall the projections $P_if=(f,\varphi_i)_h\varphi_i$ and
\begin{align}
&P_{<\lambda}=\sum_{\{i:\lambda_i<\lambda\}}P_i, && P_{=\lambda}=\sum_{\{i:\lambda_i=\lambda\}}P_i, && P_{>\lambda}=\sum_{\{i:\lambda_i>\lambda\}}P_i.
\end{align}

\begin{notation}
We introduce some notations which will be used in this section \ref{sec:classification}.
\begin{enumerate}
\item  $\{\bar \lambda_i\}_{i\in\mathbb{N}}$ is the strictly increasing sequence satisfying 
\begin{align}
\{\bar \lambda_i: i\in \mathbb{N}\}=\{ \lambda_i: i\in \mathbb{N}\}.
\end{align}
\item $\bar I\in \mathbb{N}$ is defined by $\bar \lambda_{\bar I}=\lambda_I$, where $I$ denotes the Morse index of $\mathcal{L}$.
\item $d_i$ denotes the dimension of the eigenspace associated with the eigenvalue $\bar \lambda_i$. In addition, we define $D_i=\sum_{m=1}^id_m$.  
\item We define vector valued eigenfunctions $\{\vec{\varphi}_i\}_{i\in\mathbb{N}}$ by 
\begin{equation}
\vec{\varphi}_i=(\varphi_{1+D_{i-1}},\cdots,\varphi_{D_i}).
\end{equation}
\item For each  $i\leq \bar I$, the projection $\bar \pi_i:\mathbb{R}^I\to \mathbb{R}^I$ is defined by
\begin{equation}
\bar \pi_i(\boldsymbol{a})=(0,\cdots,0,a_{1+D_{i-1}},\cdots,a_I),
\end{equation}
where $\boldsymbol{a}=(a_1,\cdots,a_I)\in \mathbb{R}^I$.
\item By abuse of notation, we would denote $(\vec{a}_1,\cdots,\vec{a}_{\bar I})\in \mathbb{R}^I$ for $\vec{a}_i\in \mathbb{R}^{d_i}$. Also, we denote $(\vec{b}_1,\vec{a}_i,\vec{b}_2)\in \mathbb{R}^I$ for $\vec{b}_1\in \mathbb{R}^{D_{i-1}}$, $\vec{a}_i\in \mathbb{R}^{d_i}$, $\vec{b}_2\in \mathbb{R}^{\bar I-D_i}$.
\end{enumerate}
\end{notation}

 \bigskip
 
 Now, we recall the ancient $\alpha$-curve shortening flow with $\alpha<\frac{1}{3}$ constructed in \cite{choisun}.
\begin{proposition}[Choi-Sun \cite{choisun}]\label{prop:existence}
There is a function $\mathcal{S}$ from $\mathbb{R}^I$ to the set of ancient solutions to \eqref{eq:v-flow2}, a time function $T:\mathbb{R}^+\to \mathbb{R}$, and $\beta\in (0,1)$ with the following significance. 
\begin{enumerate}[(i)]
\item $\mathcal{S}(\boldsymbol{0})\equiv 0$.
\item Given $\boldsymbol{a} \in B_r^I(0)$,  $\mathcal{S}(\boldsymbol{a})$ is a smooth ancient solution to \eqref{eq:v-flow2} whose first singular time is greater than $T(r)$, Namely, $\mathcal{S}(\boldsymbol{a})(\cdot,\tau)$ exists for $\tau \leq T(r)$. 
\item For each $r>0$, $\mathcal{S}|_{T(r)}: B_r^I(0)\to C^{2,\beta}(\mathbb{S}^1\times (-\infty,T(r)]) $ is a continuous map, where $\mathcal{S}|_{T(r)}(\boldsymbol{a})$ is the restriction of the function $\mathcal{S}(\boldsymbol{a})$ to $\mathbb{S}^1\times (-\infty,T(r)]$.
\item  If $\bar \pi_{j+1}(\boldsymbol{a})=\bar \pi_{j+1}(\boldsymbol{b})$ for some $j < \bar I$, then 
\begin{equation}
\lim_{\tau\to -\infty}\left\| e^{\bar \lambda_j \tau}[\mathcal{S}( \boldsymbol{a} )-\mathcal{S}( \boldsymbol{b})]- (\vec{a}_j-\vec{b}_j)\cdot \vec{\varphi}_j\right\|_h=0
\end{equation}
holds, where $\boldsymbol{a}=(\vec{a}_1,\cdots,\vec{a}_{\bar I})$ and $\boldsymbol{b}=(\vec{b}_1,\cdots,\vec{b}_{\bar I})$. Namely, $\mathcal{S}$ is injective.
\end{enumerate}  
\end{proposition}

\begin{proof}
The map $\mathcal{S}$ in this proposition is the same map $\mathcal{S}$ in \cite[Theorems 3.2 \& 3.5]{choisun}. Indeed, the theorems 3.2 \& 3.5 in \cite{choisun} already imply this proposition, but we explain the reason for readers' convenience. To begin with, we recall some notations in \cite{choisun}.
\begin{enumerate}
\item $L:=\lfloor \lambda_1/\lambda_I \rfloor$ is the greatest integer less than or equal to $\lambda_1/\lambda_I$.
\item $J^{(l)}:=\{m:(l+1)\lambda_I<\lambda_m \leq l\lambda_I\}$ for $l=1,\cdots,L$. 
\item $\delta_l$ is any real number satisfying $(l+1)\lambda_I<-\delta_l <\min\{\lambda_m:m\in J^{(l)}\}$, and $X^{(l)}:=X^{\delta_l}$ is the Banach space equipped with the norm $\|\cdot\|_{\mathcal{C}^{2,\beta,\delta_l}}$ defined on the equation (3.2) in \cite{choisun} 
\item $\iota^{(l)}(\boldsymbol{a})=\sum_{j\in J^{(l)}} a_j e^{- \lambda_j \tau}  \varphi_j$, where $\boldsymbol{a}=(a_1,\cdots,a_I)$.
\end{enumerate}
We also define a time function
\begin{equation}\label{eq:time_definition}
T(r)=\tfrac{1}{2(1+\alpha)}\min\left\{\log\tfrac{\varepsilon_0^2}{r^2},0\right\},
\end{equation}
by using the equation (3.17) in \cite{choisun}.

Next, we recall some properties of $\mathcal{S}$ in \cite{choisun}. \cite[Theorem 3.2]{choisun} says that given $\boldsymbol{a}\in \overline{B_{\varepsilon_0}^I}(0)$ there exists a sequence of smooth functions $\{v^{(l)}_{\boldsymbol{a}}\}_{l=1}^{L}$  such that $\mathcal{S}(\boldsymbol{a})= \sum_{j=1}^L v^{(j)}_{\boldsymbol{a}}$ and the following hold for each $1\leq l \leq L$. 
\begin{enumerate}[(a)]
\item $v^{(j)}_{\boldsymbol{a}}$ is defined on $\mathbb{S}^1\times (-\infty,T(|\boldsymbol{a}|)]$.
\item $\sum_{j=1}^l v^{(j)}_{\boldsymbol{a}}$ is a solution to \eqref{eq:v-flow2}. 
\item $v^{(l)}_{\boldsymbol{a}}-\iota^{(l)}(\boldsymbol{a})\in X^{(l)}$, namely $\displaystyle\lim_{\tau\to -\infty}e^{-\delta_l\tau} \|v^{(l)}_{\boldsymbol{a}}-\iota^{(l)}(\boldsymbol{a})\|_h=0$.
\item $\displaystyle\lim_{\tau\to -\infty}e^{\lambda_m\tau}(v^{(l)}_{\boldsymbol{a}}(\cdot,\tau),\varphi_m)_h=a_m$ for every $m\in J^{(l)}$.
\end{enumerate}
Notice that we obtained (a) by \eqref{eq:time_definition} and $|\boldsymbol{a}|\leq \varepsilon_0$.

For $|\boldsymbol{a}|>\varepsilon_0$, one can consider $\tilde{\boldsymbol{a}}=(\tilde{a}_1,\cdots,\tilde{a}_I)$ where $\tilde{a}_m= e^{-\lambda_mT(|\boldsymbol{a}|)}a_m$ as in the proof of \cite[Theorem 3.5]{choisun}. Then, by using $|\tilde{\boldsymbol{a}}|\leq \varepsilon_0$ we can find $\{v^{(l)}_{\tilde{\boldsymbol{a}}}\}_{l=1}^{L}$ satisfying the properties above. Then, $v^{(l)}_{\boldsymbol{a}}(\cdot,\tau)=v^{(l)}_{\tilde{\boldsymbol{a}}}(\cdot,\tau-T(|\boldsymbol{a}|))$ satisfy the desired properties (a)-(d), and moreover the map $\mathcal{S}$ is defined again by $\mathcal{S}(\boldsymbol{a})= \sum_{j=1}^L v^{(j)}_{\boldsymbol{a}}$.

\bigskip

Now, we are ready to verify this proposition \ref{prop:existence}.  First of all, we can easily obtain (ii), (iii) from \cite[Theorem 3.2 \& 3.5]{choisun} and the above properties (a), (b).

Secondly, in order to show (iv), we observe that $v^{(l)}_{\boldsymbol{a}}$ is uniquely determined by $a_{m_0},\cdots,a_I$ where $\lambda_{m_0-1}\leq (l+1)\lambda_I<\lambda_{m_0}$. To be specific, as in the proof of \cite[Theorem 3.2]{choisun}, $v^{(1)}_{\boldsymbol{a}}-\iota^{(1)}(\boldsymbol{a})$ is the unique solution to the equation (3.7) in \cite{choisun}. Hence, $v^{(1)}_{\boldsymbol{a}}$ is uniquely determined by $\iota^{(1)}(\boldsymbol{a})$.  Then, $v^{(l)}_{\boldsymbol{a}}$ is inductively determined by $\iota^{(1)}(\boldsymbol{a}),\cdots,\iota^{(l)}(\boldsymbol{a})$ due to the equation (3.11) in \cite{choisun}. Therefore, if $\bar \pi_{j_0+1}(\boldsymbol{a})=\bar \pi_{j_0+1}(\boldsymbol{b})$ with  $(l_0+1)\lambda_I< \bar \lambda_{j_0}\leq  l_0\lambda_I $, then we have $v^{(l)}_{\boldsymbol{a}}=v^{(l)}_{\boldsymbol{b}}$ for $l<l_0$. On the other hand, if $l\geq l_0$, the property (c) and $\delta_l>-\lambda_{j_0}$ yield
\begin{equation}
0=\lim_{\tau\to -\infty}e^{-\delta_l\tau}\|v^{(l)}_{\boldsymbol{a}}-v^{(l)}_{\boldsymbol{b}}-\iota^{(l)}(\boldsymbol{a}-\boldsymbol{b})\|_h=\lim_{\tau\to -\infty}e^{\bar\lambda_{j_0}\tau}\|v^{(l)}_{\boldsymbol{a}}-v^{(l)}_{\boldsymbol{b}}-\iota^{(l)}(\boldsymbol{a}-\boldsymbol{b})\|_h.
\end{equation}
Hence, summing them up yields $e^{\bar \lambda_{j_0}\tau}\|\mathcal{S}(\boldsymbol{a})-\mathcal{S}(\boldsymbol{b})-\sum_{l=l_0}^L\iota^{(l)}(\boldsymbol{a}-\boldsymbol{b})\|_h\to 0$. We remind that $\bar \pi_{j_0+1}(\boldsymbol{a})=\bar \pi_{j_0+1}(\boldsymbol{b})$ implies $a_{i}=b_{i}$ for $\lambda_i>\bar \lambda_{j_0}$. Also, $e^{(\bar \lambda_{j_0} -\lambda_i)\tau}(a_i-b_i)\varphi_i\to 0$ holds for $\lambda_i<\bar \lambda_{j_0}$. Thus, $e^{\bar \lambda_{j_0}\tau}\sum_{l=l_0}^L\iota^{(l)}(\boldsymbol{a}-\boldsymbol{b})\to (\vec{a}_{j_0}-\vec{b}_{j_0})\cdot\vec{\varphi}_{j_0}$. This completes the proof of (iv).

Finally, (i) and $v^{(l)}_{\boldsymbol{0}}\equiv 0$ are obvious by $\iota^{(l)}(\boldsymbol{0})\equiv 0$ and (3.7), (3.11) in \cite{choisun}.
\end{proof}

\bigskip

\begin{remark}\label{rmk:I_3_paramater}
Given $k\neq \infty$, the $k$-fold shrinker is unique up to rotations by \cite{andrews2003classification}, and thus Proposition \ref{prop:existence} yields an $I$-parameter family of ancient \textit{rescaled} $\alpha$-curve shortening flows up to rotations. However, \cite[Proposition 3.4]{choisun} says that the first three parameters $(a_1,a_2,a_3)$ are determined by the choice of the rescaling center. Hence, Proposition \ref{prop:existence} indeed yields an $(I-3)$-parameter family of ancient \textit{non-rescaled} flows up to rigid motions and dilation.

Similarly, for $k=\infty$, there is an $(I-4)$-parameter family of ancient \textit{non-rescaled} flows up to rigid motions and dilation, because the unit circle $\Gamma_{\infty}^\alpha$ is invariant under rotations.

Hence, by remembering Theorem \ref{thm:sep-L}, the rescaled flows obtained from Proposition \ref{prop:existence} correspond the non-rescaled flows in Theorem \ref{thm:existence}. Therefore, we can prove Theorem \ref{thm:subaffine.classification} by showing that any rescaled $\alpha$-CSF converging to $\Gamma_k^\alpha$ has the support function $\bar u=h+\mathcal{S}(\boldsymbol{a})$ for some $\boldsymbol{a}\in \mathbb{R}^I$, where $h$ is the support function of $\Gamma_k^\alpha$.
\end{remark}

\subsection{Asymptotic behavior of difference}

In this subsection, our goal is to prove the following technical theorem.
\begin{theorem}\label{thm:asy_diff}
Suppose that $v_1,v_2$ are smooth ancient solutions to \eqref{eq:v-flow2} undergoing exponential decay \eqref{eq:exp_decay}.  Then, there exist a negative eigenvalue $\bar \lambda_i$ and a non-zero vector $0\neq \vec{a}_i\in \mathbb{R}^{d_i}$ such that the difference $w=v_1-v_2$ satisfies 
\begin{align}\label{eq:rep_coe}
 \lim_{\tau\to -\infty} \left\|e^{\bar\lambda_i\tau} w-\vec{a}_i\cdot\vec{\varphi}_i \right\|_h=0, 
\end{align}
unless $w\equiv 0$.
\end{theorem}

Since each $v_i$ ($i=1,2$) satisfies \eqref{eq:v-flow2}, we have
\begin{equation}
\partial_\tau v_i-\mathcal{L}v_i=E(v_i)=h^{1+\frac{2}{\alpha}}(\partial_\theta^2v_i+v_i)^2f(h^{\frac{1}{\alpha}}(\partial_\theta^2v_i+v_i)),
\end{equation}
where $f$ is given in \eqref{eq:error_f}. Hence, the difference $w=v_1-v_2$ satisfies
\begin{equation}\label{def:F2}
w_\tau-\mathcal{L}w=E(v_1)-E(v_2)=:F(w;v_1,v_2)=:(\partial_\theta^2 w +w )G(v_1,v_2)h^{1+\frac{1}{\alpha}},
\end{equation}
where
\begin{align}\label{def:G}
\begin{split}
G:=&\,h^{\frac{1}{\alpha}}(\partial_\theta^2v_1+v_1+\partial_\theta^2v_2+v_2) f(h^{\frac{1}{\alpha}}(\partial_\theta^2v_1+v_1))\\
&+ h^{\frac{2}{\alpha}}(\partial_\theta^2v_2+v_2)^2\int_0^1 f'(h^{\frac{1}{\alpha}}(s (\partial_\theta^2v_1+v_1) +(1-s)(\partial_\theta^2v_2+v_2)))ds. \end{split}
\end{align}

\begin{lemma}\label{lem:F_total_error}
 Suppose that $v_1,v_2$ are smooth ancient solutions to \eqref{eq:v-flow2}, and given $\varepsilon>0$ there is $T_\varepsilon$ such that the difference $w=v_1-v_2$ satisfies $\|v_1(\cdot,\tau)\|_{C^4},\|v_2(\cdot,\tau)\|_{C^4}\leq \varepsilon$ for $\tau \leq T_\varepsilon$. Then, there are some constants $\varepsilon_0,C_0$ only depending on $\alpha,h$ such that  
 \begin{align}\label{eq:F_error_est}
|(F, w)_h|(\tau) \leq C_0(\|v_1\|_{C^4}+\|v_2\|_{C^4})\sup_{s\leq \tau}\|w(\cdot,s)\|_h^2 ,
\end{align}
holds for $\tau \leq T_{\varepsilon_0}$.
\end{lemma}

\begin{proof}
While $\|v_1\|_{C^4},\|v_2\|_{C^4}$ are small enough, $w$ solves the linear PDE \eqref{def:F2}
\begin{equation}
w_\tau=\mathcal{L}w+h^{1+\frac{1}{\alpha}}G (w_{\theta\theta}+w).
\end{equation}
 Hence, the standard interior regularity theory for linear parabolic PDEs yields
\begin{equation}
    \|w(\cdot,\tau)\|_{H^1}\leq C\sup_{s\leq \tau}\|w(\cdot,s)\|_h
\end{equation}
for some constant $C$ only depending on $\alpha,h$. On the other hand,
\begin{align}\label{eq:F_integration}
    (F,w)_h=\iS  ( w_{\theta\theta} +w )w G d\theta=\iS - w_{\theta}^2 G -w_\theta w G_\theta  +w^2G d\theta,
\end{align}
yields
\begin{equation}\label{eq:F_entire_error}
    |(F,w)_h|\leq C \|G\|_{C^1}\|w\|_{H^1}^2\leq C(\|v_1\|_{C^4}+\|v_2\|_{C^4}) \|w\|_{H^1}^2.
\end{equation}
Therefore, combining the two inequalities above completes the proof.
\end{proof}

\bigskip

\begin{lemma}\label{lem:fast_decay}
Suppose that $v_1,v_2$ are smooth ancient solutions to \eqref{eq:v-flow2} undergoing exponential decay \eqref{eq:exp_decay}. Then, for every negative value $\lambda<\lambda_1$, the difference $w=v_1-v_2$ satisfies 
\begin{equation}
\lim_{\tau\to -\infty}e^{\lambda\tau}\|w(\cdot,\tau)\|_{h}=+\infty,
\end{equation}
unless $v_1 \equiv v_2 $.
\end{lemma}

\begin{proof}
Suppose that $\limsup_{\tau\to -\infty} e^{\lambda\tau}\|w\|_h \leq C$ holds for some $\lambda< \lambda_1$. Then, we have $\delta'=\min\{\delta,\lambda_1-\lambda\}>0$. We recall $\varepsilon_0,T_{\varepsilon_0}$ in Lemma \ref{lem:F_total_error}, and then we find some $C_1$ and $T_1\leq T_{\varepsilon_0}$ such that 
$\|v_1\|_{C^4}+\|v_2\|_{C^4} \leq C_1 e^{\delta'\tau}$ and $e^{\lambda_1\tau}\|w\|_h\leq C_1 e^{\delta'\tau}$
hold for $\tau \leq T_1$. Hence, Lemma \ref{lem:F_total_error} yields
\begin{equation}\label{eq:Fw_est}
    |(F, w)_h|  \leq C_0C_1e^{\delta'\tau}\sup_{s\leq \tau} \|w(\cdot,s)\|_h^2.
\end{equation}
Since we have
\begin{equation}
\tfrac{1}{2}\tfrac{d}{d\tau}\|w\|_h^2=(w,\mathcal{L}w+F)_h\leq  -\lambda_1  \|w\|^2_h +(w,F)_h,
\end{equation}
combining with \eqref{eq:Fw_est} and $e^{\lambda_1\tau}\|w\|_h\leq C_1 e^{\delta'\tau}$ yields
\begin{equation}
\tfrac{1}{2}\tfrac{d}{d\tau}e^{2\lambda_1 \tau}\|w\|_h^2\leq   C_0C_1e^{(2\lambda_1+\delta') \tau}\sup_{s\leq \tau}\|w(\cdot,s)\|_h^2 \leq  C_0C_1^3  e^{3\delta'\tau}
\end{equation}
for $\tau \leq T_1$. Remembering $ \displaystyle \lim_{\tau\to -\infty} e^{\lambda_1\tau}\|w\|_h\leq \lim_{\tau\to -\infty} C_1e^{\delta'\tau}= 0$, we have 
\begin{equation}
  e^{  2\lambda_1  \tau}\|w\|_h^2=\int_{-\infty}^{\tau} \tfrac{d}{ds}e^{2\lambda_1 s}\|w(\cdot,s)\|_h^2ds \leq       \frac{2C_0C_1^3}{3\delta'} e^{3\delta'\tau}=\frac{2^2C_0C_1^3}{3!\delta'} e^{3\delta'\tau}
\end{equation}
for $\tau\leq T_1$. We repeat this process so that we have
\begin{equation}
  \tfrac{1}{2}\tfrac{d}{d\tau}e^{2\lambda_1 \tau}\|w\|_h^2\leq   C_0C_1e^{(2\lambda_1+\delta') \tau}\sup_{s\leq \tau}\|w(\cdot,s)\|_h^2 \leq  \frac{2^2C_0^2C_1^4}{3!\delta'} e^{4\delta'\tau}, 
\end{equation}
and thus
\begin{equation}
    e^{  2\lambda_1  \tau}\|w\|_h^2=\int_{-\infty}^{\tau} \tfrac{d}{ds}e^{2\lambda_1 s}\|w(\cdot,s)\|_h^2ds \leq       \frac{2^3C_0^2C_1^4}{4!(\delta')^2} e^{4\delta'\tau}. 
\end{equation}
By iterating the same process, we obtain
\begin{equation}
    e^{  2\lambda_1  \tau}\|w\|_h^2  \leq       \frac{2^{k-1}C_0^{k-2}C_1^k}{k!(\delta')^{k-2}} e^{k\delta'\tau}
\end{equation}
for $2\leq k \in \mathbb{N}$ and $\tau \leq T_1$. Passing $k\to \infty$ yields $w\equiv 0$ for $\tau \leq T_1$, and therefore $w\equiv 0$ holds for all $\tau$.
\end{proof}

\bigskip

\begin{lemma}\label{lem:FP-}
 Suppose that $v_1,v_2$ are smooth ancient solutions to \eqref{eq:v-flow2}  undergoing exponential decay \eqref{eq:exp_decay}, and the difference $w=v_1-v_2$ satisfies $\limsup_{\tau\to -\infty} e^{\lambda \tau}\| w\|_h \leq C$ for some $C >0$ and $\lambda <0$. Then, there are some constant $C$ depending on $\alpha, h$ and some $T\ll -1$ such that
 \begin{align}\label{eq:F_error_est1}
|(F,P_-w)_h|+\sum_{\lambda_j\leq 0} |(F,P_{=\lambda_j}w)_h|\leq Ce^{(\delta -2\lambda) \tau} ,
\end{align}
and
\begin{equation}\label{eq:F_projection}
 \sum_{\lambda_j\leq 0} |(F,\varphi_j )_h|\leq Ce^{\delta \tau}  \|w\|_{h},
\end{equation}
hold for $\tau\leq T$ and $F$ given in \eqref{def:F2}.
\end{lemma}

\begin{proof}
As \eqref{eq:F_integration}, by using integration by parts, we can obtain
 \begin{equation}
    |(F,\varphi_j )_h|\leq \left| \int wG\varphi_j+w(G\varphi_j)_{\theta\theta}d\theta \right|\leq  C \|G\|_{C^2}\|\varphi_j \|_{H^2} \|w\|_{h}.
\end{equation}
Since $\varphi_j$ is a unit eigenfunction with the eigenvalue $\lambda_j\leq 0$, we have 
\begin{equation}
\|\varphi_j \|_{H^2}\leq C(1+\|\mathcal{L}\varphi_j\|_h)\leq C(1+|\lambda_j|).    
\end{equation}
Notice that we know $\varphi_1=h/\|h\|_h $ and $\lambda_1=-1-\alpha$ by the Courant nodal domain theorem. (See also Proposition 2.2 in \cite{choisun}.) Therefore, $\lambda_j\leq 0$ and  implies $|\lambda_j|\leq |\lambda_1|=1+\alpha$. Thus, combining the inequalities above with $\|G\|_{C^2}\leq Ce^{\delta\tau}$ yields \eqref{eq:F_projection}. 

Next, \eqref{eq:F_projection} implies
 \begin{align}
\sum_{\lambda_j\leq 0} |(F,P_{=\lambda_j}w)_h|\leq Ce^{\delta \tau}  \|w\|_{h}^2\leq  Ce^{(\delta -2\lambda) \tau}.
\end{align}
Hence, combining with Lemma \ref{lem:F_total_error} completes the proof of \eqref{eq:F_error_est1}.
\end{proof}

\bigskip

Let $v_1,v_2$ be smooth ancient solutions to \eqref{eq:v-flow2} satisfying \eqref{eq:exp_decay}. Then, there is $\bar \lambda \in [-\infty,-\delta]$ such that the difference $w=v_1-v_2$ satisfies 
\begin{equation}\label{eq:optimal_decay}
\bar \lambda:= \inf \{ \lambda \in \mathbb{R}:  \limsup_{\tau\to -\infty}e^{\lambda\tau}\|w(\cdot,\tau)\|_{h}<+\infty \}.
\end{equation}
Suppose that the difference $w=v_1-v_2$ is not identically zero, Then, by Lemma \ref{lem:fast_decay} there exists a negative eigenvalue $\bar \lambda_j<0$ such that 
\begin{equation}\label{eq:optimal_eigenvalue}
\bar \lambda_j \leq \bar \lambda < \bar \lambda_{j+1}\leq 0.
\end{equation}
By using $\bar \lambda_j$, we define
\begin{align}\label{eq:W_proj_definition}
&    W_+:=\| P_{<\bar \lambda_j}w\|_h^2, &&  W_*:=\| P_{=\bar \lambda_j}w\|_h^2, &&  W_-:=\| P_{>\bar \lambda_j}w\|_h^2.
\end{align}

\bigskip

\begin{proposition}\label{prop:W_ODEs}
Suppose that $v_1,v_2$ are smooth ancient solutions to \eqref{eq:v-flow2} satisfying \eqref{eq:exp_decay}, with the difference $w=v_1-v_2 \not \equiv 0$.  We recall  $\bar \lambda_j,\bar \lambda, W_+,W_*,W_-$ in \eqref{eq:optimal_decay}, \eqref{eq:optimal_eigenvalue}, and \eqref{eq:W_proj_definition}. Then, there are some constants $C>0$ and $T\ll -1$ such that
\begin{align}
    \tfrac{d}{d\tau}W_- + 2\bar \lambda_{j+1} W_-& \leq C e^{(\frac{4}{5}\delta-2\bar \lambda )\tau},\\
    \left|\tfrac{d}{d\tau}W_*+2\bar \lambda_{j} W_*\right| &\leq  C e^{(\frac{4}{5}\delta-2\bar \lambda )\tau},\\
    \tfrac{d}{d\tau}W_+ + 2\bar \lambda_{j-1} W_+ &\geq - C e^{(\frac{4}{5}\delta-2\bar \lambda )\tau},
\end{align}
hold for $\tau \leq T$.
\end{proposition}
\begin{proof}
By definition of $\bar \lambda$, $\|w(\cdot,\tau)\|_h\leq C e^{-(\bar\lambda+\frac{\delta}{10})\tau}$ holds for some $C$ and sufficiently negative $\tau$. Next, we observe 
\begin{align}
    \tfrac12\tfrac{d}{d\tau}W_*= (P_{=\bar\lambda_j}w,\mathcal{L} w+ F)_h=-\bar\lambda_jW_*+(P_{=\bar\lambda_j}w, F)_h.
\end{align}
Then, \eqref{eq:F_projection} yields $|(P_{=\bar\lambda_j}w,  F)_h|\leq Ce^{\delta\tau}\|w\|_h^2\leq Ce^{(\frac{4}{5}\delta-2\bar\lambda)\tau}$ for sufficiently negative $\tau$. The other two inequalities can be shown similarly. (See the proof of Lemma \ref{lem:MZ_v}.)
\end{proof}
\bigskip

\begin{lemma}\label{lem:first_decay_rate}
Suppose that $v_1,v_2$ are smooth ancient solutions to \eqref{eq:v-flow2} satisfying \eqref{eq:exp_decay} with the difference $w=v_1-v_2 \not \equiv 0$. We recall $\bar \lambda,\bar \lambda_j$ in \eqref{eq:optimal_decay} and \eqref{eq:optimal_eigenvalue}. Then, we have $\bar \lambda=\bar \lambda_j$, and there are some constants $C,\delta'>0$ and $T\ll -1$ such that 
\begin{equation}\label{w-Pw-2/5}
    \|w-P_{=\bar \lambda_j}w\|_h \leq C e^{(\delta'-\bar \lambda_j)\tau}
\end{equation}
holds for $\tau \leq T$.
\end{lemma}

\begin{proof} Given any $\delta' \in (0, \bar \lambda_{j+1}-\bar \lambda] $, by Proposition \ref{prop:W_ODEs}, there are some $C,T$ such that 
\begin{align}
        \tfrac{d}{d\tau} e^{(2\bar \lambda +2\delta')\tau }W_- \leq -2(\bar \lambda_{j+1}-\bar \lambda-\delta')e^{(2\bar \lambda +2\delta')\tau }W_- +Ce^{(\frac{4}{5}\delta  +2\delta'  )\tau} \leq Ce^{(\frac{4}{5}\delta  +2\delta'  )\tau}
\end{align}
holds for $\tau\leq T$. Since $ e^{2\bar \lambda+2\delta'}W_- \leq  e^{2\bar \lambda+2\delta'}\|w\|_h^2 \to 0$ as $\tau \to -\infty$, we have
\begin{equation}
e^{(2\bar \lambda +2\delta')\tau }W_-(\tau)=\int_{-\infty}^{\tau}\tfrac{d}{ds} e^{(2\bar \lambda +2\delta')s }W_-(s)ds\leq \int_{-\infty}^{\tau}Ce^{(\frac{4}{5}\delta  +2\delta'  )s}ds =Ce^{(\frac{4}{5}\delta  +2\delta'  )\tau}.
\end{equation}
Namely,
\begin{equation}\label{eq:P>_est}
 \|P_{>\bar\lambda_j}w\|_h=: \sqrt{W_-}  \leq Ce^{(\frac{2}{5}\delta  -\bar\lambda  )\tau}.
\end{equation}

Next, we consider $W_+$. If $\bar \lambda_j= \lambda_1$ then we have $W_+=0$. So, we may assume $\bar \lambda_j > \lambda_1$ and choose any $0< \delta' \leq  \min\{\bar \lambda-\bar \lambda_{j-1},\frac{1}{5}\delta \}$. Then, by Proposition \ref{prop:W_ODEs},
\begin{align}
        \tfrac{d}{d\tau} e^{(2\bar \lambda -2\delta')\tau }W_+ \geq 2(\bar \lambda -\delta' -\bar \lambda_{j-1})e^{(2\bar \lambda +2\delta')\tau }W_+ -Ce^{(\frac{4}{5}\delta  -2\delta'  )\tau} \geq -Ce^{(\frac{4}{5}\delta  -2\delta'  )\tau}
\end{align}
holds for some constant $C$ and negative enough $\tau$. Therefore,
\begin{equation}
    e^{(2\bar \lambda -2\delta')\tau }W_+(\tau)-e^{(2\bar \lambda -2\delta')\tau_0 }W_+(\tau_0) \leq \int^{\tau_0}_{\tau}Ce^{(\frac{4}{5}\delta  -2\delta'  )s}ds\leq C. 
\end{equation}
This implies $e^{(2\bar \lambda -2\delta')\tau }W_+ \leq C$. Thus,
\begin{equation}\label{eq:P<_est}
    \|P_{<\bar\lambda_j}w\|_h=: \sqrt{W_+}  \leq Ce^{( \delta'  -\bar\lambda  )\tau}.
\end{equation}
Hence, combining with \eqref{eq:P>_est} yields \eqref{w-Pw-2/5}.

Finally, if $\bar \lambda \neq \bar \lambda_j$, then as \eqref{eq:P<_est} we can show
 \begin{equation}
    \|P_{=\bar\lambda_j} w\|_h=: \sqrt{W_*}  \leq Ce^{( \delta'  -\bar\lambda  )\tau}
\end{equation}
for some $\delta'>0$. This contradicts the definition of $\bar \lambda$ and \eqref{w-Pw-2/5}. Thus, $\bar \lambda = \bar \lambda_j$.
 \end{proof}

\bigskip

\begin{corollary}\label{cor:limit-exists}
Suppose that $v_1,v_2$ are smooth ancient solutions to \eqref{eq:v-flow2} satisfying \eqref{eq:exp_decay} with the difference $w=v_1-v_2 \not \equiv 0$. There exists a constant $a_0>0$ satisfying
\begin{align}\label{elth}
    \lim_{\tau\to -\infty}e^{\bar\lambda_j\tau}\|P_{=\bar\lambda_j}w\|_h=a_0,
\end{align}
where $\bar \lambda_j$ defined in \eqref{eq:optimal_eigenvalue}. 
\end{corollary} 
\begin{proof}
 Proposition \ref{prop:W_ODEs} and $\bar \lambda = \bar\lambda_j$ in Lemma \ref{lem:first_decay_rate}  imply
\begin{equation}
    \tfrac{d}{d\tau}e^{ 2\bar \lambda_j \tau} W_* = O(e^{\frac{4}{5}\delta  \tau}). 
\end{equation}
Then, for any $\tau'<\tau$, we have
\begin{align}\label{eq:Cauchy_seq}
    |e^{2\bar\lambda_j\tau}W_*(\tau)-e^{2\bar\lambda_j\tau'}W_*(\tau')|\leq C\int_{\tau'}^{\tau}e^{\frac{4}{5}\delta \tau}\leq Ce^{\frac{4}{5}\delta\tau}.
\end{align}
Therefore, $e^{2\bar\lambda_j\tau}W_*(\tau)$ is a Cauchy sequence. Hence, there is $a_0 \geq 0$ such that
\begin{equation}
    \lim_{\tau\to -\infty}e^{2\bar\lambda_j\tau}W_*(\tau)=a_0.
\end{equation}

Towards a contradiction, we suppose   $a_0=0$. Then, \eqref{eq:Cauchy_seq} yields
\begin{equation}
    e^{ 2\bar \lambda_j \tau}    W_*(\tau)\leq Ce^{\frac{4}{5}\delta  \tau}.
\end{equation}
Combining with Lemma \ref{lem:first_decay_rate} contradicts the definition of $\bar\lambda$.
\end{proof}

\begin{proof}[Proof of Theorem \ref{thm:asy_diff}]
Using \eqref{w-Pw-2/5} and \eqref{elth}, we can obtain 
\begin{align}\label{eq:w_h.estimate}
    \|w(\cdot,\tau)\|_h\leq \|w-P_{=\bar\lambda_j}w\|_h+\|P_{=\bar\lambda_j}w\|_h\leq 2a_0 e^{-\bar\lambda_j\tau},
\end{align}
for sufficiently negative $\tau$. On the other hand, we have
\begin{align}
\tfrac{d}{d\tau}e^{\bar\lambda_j\tau}(w,\vec{\varphi}_j)_h=e^{\bar\lambda_j\tau}(\bar\lambda_j w+ \mathcal{L}w+F ,\vec{\varphi}_j)_h=e^{\bar\lambda_j\tau}(F ,\vec{\varphi}_j)_h.
\end{align}
Hence, combining with  \eqref{eq:F_projection} and \eqref{eq:w_h.estimate} leads to
\begin{align}
\left|\tfrac{d}{d\tau}e^{\bar\lambda_j\tau}(w,\vec{\varphi}_j)_h\right|\leq C e^{\delta \tau+\bar\lambda_j\tau}\|w\|_{h}\leq Ce^{\delta\tau}.
\end{align}
Thus, $e^{\bar\lambda_j\tau}(w,\vec{\varphi}_j)_h$ has the backward limit $\vec{a}\in \mathbb{R}^{d_j}$. Also,  \eqref{elth} implies $\vec{a}\neq 0$.
\end{proof}

 \subsection{Classification}

 In this section, we classify convex closed ancient $\alpha$-curve shortening flows with $\alpha<\frac{1}{3}$ by proving Theorem \ref{thm:subaffine.classification}.

\begin{proof}[Proof of Theorem \ref{thm:subaffine.classification}]
Suppose that $\overline{\Gamma}_\tau$ is a smooth convex closed rescaled ancient $\alpha$-CSF with $\alpha<\frac{1}{3}$. By Proposition \ref{prop:unique_tangent.flow}, $\overline{\Gamma}_\tau$ converges to a shrinker as $\tau\to -\infty$. We may rotate the flow, in order to make it converge to $\Gamma^\alpha_k$. Then, as discussed in Remark \ref{rmk:I_3_paramater}, it is enough to show that there exists a certain $\boldsymbol{a}'=(\vec{a}_1',\cdots,\vec{a}_{\bar I}')$ such that $\bar u -h=\mathcal{S}(\boldsymbol{a}')$ holds for sufficiently negative time, where $\bar u, h$ are the support functions of $\overline{\Gamma}_\tau,\Gamma_k^\alpha$, respectively.

Since $v:=\bar u-h$ satisfies \eqref{eq:exp_decay}, we can apply Theorem \ref{thm:asy_diff} for $w=v-\mathcal{S}(\boldsymbol{0})$. Notice that we have $\mathcal{S}(\boldsymbol{0})\equiv 0 $ by Proposition \ref{prop:existence}. Hence, we can find $\bar \lambda_i<0$ and $\vec{a}_i\neq 0$ satisfying \eqref{eq:rep_coe}, unless $v\equiv \mathcal{S}(\boldsymbol{0})\equiv 0$. If $ i=\bar I $, then we set $\vec{a}'_{\bar I}=\vec{a}_i$. Otherwise, we set $\vec{a}_{\bar I}'=0$. Then, we have
\begin{equation}\label{eq:first_rep}
\lim_{\tau\to -\infty}\|e^{\bar \lambda_{\bar I} \tau} v-\vec{a}_{\bar I}'\cdot \vec{\varphi}_{\bar I}\|_h=0.
\end{equation}
Now, we define $\boldsymbol{a}_{\bar I}'=(0,\cdots,0,\vec{a}_{\bar I}')$ and consider $\mathcal{S}(\boldsymbol{a}_{\bar I}')=\sum_{l=1}^Lv^{(l)}(\boldsymbol{a}_{\bar I}')$. Then, (4) and (c) in Proposition \ref{prop:existence} imply that 
\begin{align}
    \lim_{\tau\to -\infty}\|e^{\bar{\lambda}_{\bar I}\tau}\mathcal{S}(\boldsymbol{a}_{\bar I}')-\vec{a}_{\bar I}'\cdot \vec{\varphi}_{\bar I}\|_h=0.
\end{align}
Combining this with \eqref{eq:first_rep}, we obtain
\begin{equation}\label{eq:first_approx}
\lim_{\tau\to -\infty}e^{\bar \lambda_{\bar I} \tau}\|v-\mathcal{S}(\boldsymbol{a}_{\bar I}')\|_h=0.
\end{equation}

Next, we again apply Theorem \ref{thm:asy_diff}  for $w=v-\mathcal{S}(\boldsymbol{a}_{\bar I}')$. Then, we can find  $\bar \lambda_i<0$ and $\vec{a}_i\neq 0$ satisfying \eqref{eq:rep_coe}, unless $v=\mathcal{S}(\boldsymbol{a}_{\bar I}')$. In addition,  \eqref{eq:first_approx} implies $i\leq \bar I-1$. If $i=\bar I-1$ then we set $\vec{a}'_{\bar I-1}=\vec{a}_i$. Otherwise, we set $\vec{a}_{\bar I-1}'=0$. Then, as above we can obtain
\begin{equation}
\lim_{\tau\to -\infty}e^{\bar \lambda_{\bar I-1} \tau}\|v-\mathcal{S}(\boldsymbol{a}_{\bar I-1}')\|_h=0,
\end{equation}
where $\boldsymbol{a}_{\bar I-1}'=(0,\cdots,0,\vec{a}_{\bar I-1}',\vec{a}_{\bar I}')$. 

We repeat this process at most $\bar I$-times so that we find either $v\equiv\mathcal{S}(\boldsymbol{0})\equiv 0$, $v\equiv\mathcal{S}(\boldsymbol{a}_{i}')$ for a certain  $2\leq i\leq \bar I$, or
\begin{equation}
\lim_{\tau\to -\infty}e^{\bar \lambda_{1} \tau}\|v-\mathcal{S}(\boldsymbol{a}_{1}')\|_h=0.
\end{equation}
In the last case, Theorem \ref{thm:asy_diff}  with $w=v-\mathcal{S}(\boldsymbol{a}_{1}')$ concludes $ v\equiv \mathcal{S}(\boldsymbol{a}_{1}')$. This completes the proof.
\end{proof}

\appendix
\section{Morse index and kernel}

\begin{theorem}[\cite{choisun}]\label{thm:sep-L} 
Suppose that $0<\alpha \neq \frac13$. For the linearized operator \eqref{eq:L-def}, we have
\begin{enumerate}
\item The Morse index of $\mathcal{L}_{\Gamma^{\alpha}_k}$ is $2k-1$, and  $\ker \mathcal{L}_{\Gamma^{\alpha}_k}=\textup{span}\{h_\theta\}$, where $h$ is the support function of $\Gamma^{\alpha}_k$.
\item The Morse index of $ \mathcal{L}_{\Gamma^{\alpha}_\infty}$ is $2\lceil\sqrt{1+1/\alpha}\rceil-1$.\footnote{$\lceil x\rceil$ denotes the least integer greater than or equal to $x$.}  If $\alpha=\frac{1}{k^2-1}$, then $\ker \mathcal{L}_{\Gamma^{\alpha}_\infty}=\textup{span}\{\cos k\theta,\sin k\theta\}$. Otherwise $\ker \mathcal{L}_{\Gamma^{\alpha}_\infty}=\emptyset$.
\end{enumerate}
\end{theorem}

\section{ODE lemmas}
The following lemma is used in Section \ref{sec:dich}. 
\begin{lemma}[\cite{merle1998optimal, choi2018ancient,choi2019ancient}]\label{lem:MZ} Suppose that $x, y, z:(-\infty, T] \rightarrow[0, \infty)$ are absolutely continuous functions satisfying
$ x+y+z>0$, $\displaystyle \liminf _{\tau \rightarrow-\infty} y(\tau)=0$, and 
\begin{align}
\begin{split}
\left|x^{\prime}\right| &\leq \varepsilon(x+y+z), \\
y^{\prime}  &\leq -y+ \varepsilon(x+z), \\
z^{\prime} & \geq z-\varepsilon(x+y),
\end{split}
\end{align}
for some $\varepsilon>0$. Then, there exist some positive universal constants $\varepsilon_{0},c $ such that if $\varepsilon \leq \varepsilon_{0}$  then either $x+y\leq c\varepsilon z$ holds on $(-\infty,T]$  or $y+z=o(x)$  as $\tau\to -\infty$ .
\end{lemma}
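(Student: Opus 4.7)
The plan is to treat the three functions as representing, respectively, neutral ($x$), stable ($y$), and unstable ($z$) modes of a weakly coupled linear ODE system, with coupling strength $\varepsilon$. The two alternatives in the conclusion correspond to the unstable mode dominating throughout versus the neutral mode dominating at backward infinity; the hypothesis $\liminf_{\tau\to-\infty} y(\tau) = 0$ is precisely what rules out the a priori third possibility in which $y$ dominates (indeed, $y$ is unstable \emph{backward} in time, so without an anchoring condition it could grow unboundedly as $\tau\to-\infty$).

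The first step is a forward-invariance argument on the \emph{unstable cone} $\{z\ge A(x+y)\}$ with threshold $A = 1/(c\varepsilon)$ for a fixed $c\ge 4$. Setting $f(\tau) = z(\tau) - A\bigl(x(\tau)+y(\tau)\bigr)$, a direct computation using all three inequalities shows that at any $\tau_\ast$ where $f(\tau_\ast)=0$,
\begin{equation*}
f'(\tau_\ast) \ge A(x+y)-\varepsilon(x+y) - A\bigl[\varepsilon(x+y+z) + \varepsilon(x+z) - y\bigr] > 0
\end{equation*}
provided $\varepsilon\le \varepsilon_0$ with $\varepsilon_0 c$ sufficiently small. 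Hence $\{f\ge 0\}$ is forward-invariant, and by contrapositive either $f\ge 0$ on all of $(-\infty,0]$, yielding the first alternative, or there is $\tau_1\le 0$ such that $z<A(x+y)$ on the entire interval $(-\infty,\tau_1]$.

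In the second case I would work on $(-\infty,\tau_1]$ and show $y+z = o(x)$ at $-\infty$. Substituting $z\le A(x+y)$ into the $y$-inequality gives $y' + (1-A\varepsilon)y \le \varepsilon(1+A)x$, i.e.\ $y$ is a forward-stable mode driven by a controlled multiple of $x$. Running a second forward-invariance computation on the \emph{neutral cone} $\{y\le \eta x\}$ for arbitrarily small $\eta>0$ (possible after the $z$-bound has been extracted), and using the anchoring hypothesis $\liminf y = 0$ — which supplies a sequence $\tau_n\to-\infty$ on which the cone condition $y\le\eta x$ is forced, and from which forward-invariance propagates the bound — one concludes $y(\tau)/x(\tau)\to 0$ as $\tau\to-\infty$. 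A final iterate of the same argument, now with $y = o(x)$ in hand, upgrades $z\le A(x+y)\le A(1+o(1))x$ to $z = o(x)$ via a Gronwall estimate on $z$ backward in time, since $z$ is forward-unstable and hence backward-stable modulo the small coupling.

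The main obstacle is the compatibility of the three forward-invariance arguments: their constants $A$, $\eta$, and the final threshold for $z$ must be chosen in a coherent way so that a single $\varepsilon_0$ governs them all, and so that the asymptotic smallness derived from $\liminf y = 0$ propagates cleanly through the iteration. This orchestration is the technical heart of the classical Merle–Zaag lemma of \cite{merle1998optimal}; the precise formulation used here appears as stated in \cite{choi2018ancient,choi2019ancient}, and for the purposes of this paper it suffices to invoke those references for the full proof.
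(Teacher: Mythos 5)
The paper does not supply its own proof of Lemma~\ref{lem:MZ}; it is stated and delegated to \cite{merle1998optimal,choi2018ancient,choi2019ancient}, so I evaluate your sketch on its own terms. Your first step — forward invariance of the unstable cone $\{z\ge A(x+y)\}$, $A=1/(c\varepsilon)$ — is essentially correct, but the condition "provided $\varepsilon_0 c$ sufficiently small'' is misstated: substituting $z=A(x+y)$ into your displayed lower bound for $f'$ produces a leading term $A(x+y)(1-2/c)$ minus terms of size $O(x+y)$, so what one actually needs is $c$ large enough (e.g.\ $c\ge4$) and $\varepsilon\le\varepsilon_0$ small, not $\varepsilon_0 c$ small.

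The real gap is in the second step. Forward invariance of the cone $\{y\le\eta x\}$ does \emph{not} hold for arbitrarily small $\eta$. Repeating your barrier computation at $y=\eta x$, using $z\le A(x+y)$ and $A\varepsilon=1/c$, one gets
\begin{equation*}
\frac{d}{d\tau}\bigl(\eta x-y\bigr)\;\ge\;x\Bigl[\eta\bigl(1-\varepsilon(1+\eta)\bigr)-\frac{(1+\eta)^2}{c}-\varepsilon\Bigr],
\end{equation*}
which is positive only when $\eta\gtrsim 1/c+\varepsilon$. So the neutral-cone barrier has a floor of order $1/c+\varepsilon$; you can conclude $y\lesssim(1/c+\varepsilon)x$, not $y=o(x)$. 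The same obstruction reappears in the final Gronwall step for $z$: from $z'-z\ge-\varepsilon(x+y)$ you obtain at best $z\lesssim\varepsilon x$, not $z=o(x)$.

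This is not a repairable defect of your sketch — the statement as written, with a \emph{fixed} $\varepsilon$ and the conclusion $y+z=o(x)$, is actually false. For instance $x\equiv1$, $y(\tau)=\tfrac{\varepsilon}{4}e^{\tau/2}$, $z\equiv\tfrac{\varepsilon}{2}$ on $(-\infty,0]$ satisfies $|x'|\le\varepsilon(x+y+z)$, $y'+y\le\varepsilon(x+z)$, $z'-z\ge-\varepsilon(x+y)$, $x+y+z>0$ and $\liminf_{\tau\to-\infty}y=0$, yet $x+y\le c\varepsilon z$ fails (the left side is $\ge1$, the right side is $c\varepsilon^2/2$) and $(y+z)/x\to\varepsilon/2\ne0$. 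The classical Merle--Zaag dichotomy with fixed $\varepsilon$ gives the alternative $y+z\lesssim\varepsilon x$ (compare the paper's own proved variant, Lemma~\ref{lem:bdd-MZ}, whose conclusion is $y+z\le 8\sigma x+4\varepsilon e^{-L/4}$); the $o(x)$ form stated in Lemma~\ref{lem:MZ} and used in Proposition~\ref{prop:dichotomy} requires the coupling coefficient to be a function $\sigma(\tau)\to0$ as $\tau\to-\infty$, which is exactly the setting of the cited references and of the application in this paper. Your sketch should either target the weaker conclusion $y+z\le C\varepsilon x$, or run the argument with $\varepsilon$ replaced by a vanishing $\sigma(\tau)$ so that the cone-thresholds themselves tend to zero; as written it cannot close.
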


In Section \ref{sec:non-radial}, we need a version of the above lemma on a finite interval.
\begin{lemma}\label{lem:bdd-MZ}
Let $x, y, z:[-L, L] \rightarrow[0, \varepsilon)$ be absolutely continuous functions satisfying 
\begin{align*} 
\left|x^{\prime}\right| & \leq \sigma(x+y+z), \\
 y^{\prime}& \leq -y+\sigma(x+z),\\
 z^{\prime} & \geq z-\sigma(x+y),
\end{align*}
for some $\sigma \in (0,\frac{1}{100})$. Then, 
\begin{equation}
y+z\leq 8\sigma x+4\varepsilon e^{-\frac14L},
\end{equation}
holds for $s\in[-L/2,L/2]$.
\end{lemma}
\begin{proof}
We define $ \beta:=y- 3 \sigma(x+z)$. Then, $0<\sigma<\frac{1}{100}$ and $x,y,z\geq 0$ imply
\begin{equation}
\beta^{\prime} \leq-y+\sigma(x+z)+3 \sigma^{2}(x+y+z)-3 \sigma(z-\sigma(x+y)) \leq   -\tfrac{1}{2}\beta,
\end{equation}
namely $\frac{d}{ds}[ e^{\frac{s}{2}}\beta(s)]\leq 0$. Hence, remembering $x,y,z\in [0,\varepsilon)$,  if $s \geq -L/2$, then
\begin{equation}
\beta(s) \leq e^{-\frac{s}{2} }e^{ - \frac{L}{2} }\beta(-L)\leq \varepsilon e^{-\frac{L}{4}},
\end{equation}
holds. Namely, the following holds for $s\in [-L/2,L]$.
\begin{equation}\label{eq:ODE2-y-bound}
y\leq 3\sigma(x+z)+\varepsilon e^{-\frac{L}{4}}.
\end{equation}
Similarly,   $\gamma:=z-3\sigma(x+y)$ satisfies $\gamma'\geq \frac{1}{2}\gamma$, namely $\frac{d}{ds}[e^{-\frac{s}{2}}\gamma(s)]\geq 0$. Thus,
\begin{equation}
\gamma(s) \leq e^{ \frac{s}{2} }e^{ - \frac{L}{2} }\gamma(L)\leq \varepsilon e^{-\frac{L}{4}},
\end{equation}
holds  for $s\leq L/2$. Namely, the following holds for $s\in [-L,L/2]$.
\begin{equation}\label{eq:ODE2-z-bound}
z\leq 3\sigma(x+y)+\varepsilon e^{-\frac{L}{4}}.
\end{equation}
Since we have $\sigma <\frac{1}{100}$ and $y,z\geq 0$, adding \eqref{eq:ODE2-y-bound} and \eqref{eq:ODE2-z-bound} yields
\begin{equation}
y+z \leq 6\sigma x +\tfrac{1}{10}(y+z) +2\varepsilon e^{-\frac{L}{4}},
\end{equation}
for $|s|\leq L/2$. This completes the proof.
\end{proof}

\begin{lemma}\label{lem:ODE-basic}  $\tilde{\rho}, f:(-\infty,T]\to \mathbb{R}$ are absolutely continuous   functions satisfying
\begin{align}\label{eq:ODE-basic-assumption}
& \lim_{\tau\to -\infty} f(\tau)=0 , && \lim_{\tau\to -\infty}\tilde{\rho}(\tau)e^{-\frac{|\lambda|}{2}\tau}=+\infty, && |\tilde{\rho}'| \leq \tfrac{1}{2}|\lambda| \tilde{\rho},
\end{align}
for some $\lambda \neq 0$. Moreover, there exist  some constants  $K>0$ and $T_K\leq T$ satisfying
\begin{equation}\label{eq:ODE1-1}
 \lambda^{-1} f'\leq   -f +   K \tilde{\rho} , 
\end{equation}
for $\tau\in (-\infty,T_K]$. Then,  there exists some $T_*\leq T_K$ such that 
\begin{equation}
f_+:=\max\{f,0\}\leq 2K\tilde{\rho}
\end{equation}
holds for $\tau \leq T_*$.
\end{lemma}

\begin{remark}
Notice that $0\leq |\tilde{\rho}'|\leq \frac{1}{2}|\lambda| \tilde{\rho}$ implies $\tilde{\rho}\geq 0$. However, $f$ is not necessarily non-negative.
\end{remark}

\begin{proof}
Towards a contradiction, we suppose that there exists a decreasing sequence $\{\tau_i\}_{i\in \mathbb{N}}$ satisfying $\tau_i\leq T_K$, $\lim  \tau_i=-\infty$, and $\beta(\tau_i)>0$, where  $\beta:=f-2K \tilde{\rho}$. Then, 
\begin{equation}
 \lambda^{-1} \beta'=  \lambda^{-1} f'-2 \lambda^{-1} K\tilde\rho'\leq  - f  + 2 K \tilde{\rho}=  -\beta 
\end{equation}
holds for $\tau \leq T_K$, namely  
\begin{equation}\label{eq:ODE-lemma-beta}
\tfrac{1}{\lambda}\tfrac{d}{d\tau}[e^{ \lambda \tau}\beta(\tau) ] \leq 0.
\end{equation}
 Hence, if $\lambda>0$ then  \eqref{eq:ODE-lemma-beta} implies $e^{\lambda\tau}\beta(\tau)\geq e^{\lambda \tau_1}\beta(\tau_1)>0$  for $\tau\leq \tau_1$. Thus, \eqref{eq:ODE1-1} and $\beta>0$ yield 
\begin{equation}
\lambda^{-1} f'\leq -f+K \tilde{\rho}=-\beta-K \tilde{\rho}<0,
\end{equation} 
for $\tau\leq \tau_1$. Therefore, $f$ is  decreasing in $(-\infty,\tau_1]$, and this contradicts
\begin{equation}
0<\beta(\tau_1)\leq f(\tau_1)\leq \lim_{\tau \to -\infty}f(\tau)=0.
\end{equation}
Therefore, we have $\lambda<0$ and thus $e^{\lambda\tau}\beta$ is an increasing function by \eqref{eq:ODE-lemma-beta}. Hence, $\beta(\tau_i)>0$ implies $\beta(\tau)>0$ for  $\tau\in [\tau_i,T_K]$. Since $\tau_i$ diverges to $-\infty$, we have $\beta(\tau)>0$ for all $\tau\leq T_K$.  Thus, \eqref{eq:ODE1-1} yields
\begin{equation}
  f'\geq  -\lambda ( f-K\rho) =|\lambda|( \tfrac12 f +\tfrac12 \beta)>\tfrac{|\lambda|}{2} f.
\end{equation}
Therefore, for $\tau\leq T_K$ we have
\begin{equation}
 e^{-\frac{|\lambda|}{2}T_K}f(T_K) \geq e^{-\frac{|\lambda|}{2}\tau}f(\tau) \geq  2K e^{-\frac{|\lambda|}{2}\tau}\tilde{\rho}(\tau).  
\end{equation}
This contradicts the assumption that  $e^{-\frac{|\lambda|}{2}\tau}\tilde{\rho}\to +\infty$ as $\tau \to -\infty$.
\end{proof}

\begin{lemma}\label{lem:ODE-1}  $\tilde{\rho}, f:(-\infty,T]\to \mathbb{R}$ are absolutely continuous   functions satisfying \eqref{eq:ODE-basic-assumption} and $
\lambda^{-1} f'\leq   -f +o( \tilde{\rho})$ for some $\lambda\neq 0$. Then,   $f_+= o(\tilde{\rho})$ holds.
\end{lemma}

\begin{proof}
Given any small $\varepsilon>0$, we apply Lemma \ref{lem:ODE-basic} with $K=\varepsilon$.
\end{proof}

\begin{lemma}\label{lem:ODE-2}  $\tilde{\rho}, f:(-\infty,T]\to \mathbb{R}$ are absolutely continuous  functions satisfying \eqref{eq:ODE-basic-assumption} and $
  f'=-\lambda   f  +O( \tilde{\rho})$ for some $\lambda\neq 0$. Then, we have  $ f =O(\tilde{\rho}) $.
\end{lemma}

\begin{proof}
$f'=-\lambda   f  +O( \tilde{\rho})$ implies $|\lambda^{-1} f'+f |\leq K \tilde{\rho} $ for some $K>0$. Thus, Lemma \ref{lem:ODE-basic} implies $f\leq 2K\tilde{\rho}$ for negative enough $\tau$. In addition, we can apply Lemma \ref{lem:ODE-basic} for $-f$ so that we can obtain $-f\leq 2K\tilde{\rho}$  for negative enough $\tau$. This completes the proof.
\end{proof}

\section{Shrinkers in entropy order}
In this part, we investigate the entropies of closed shrinkers. Note that the entropy is invariant under scaling. 
We will show the following ordering of the entropy.
\begin{theorem}\label{thm:ent-cmp}Given $\alpha\in(0,1/8)$, the entropies of closed shrinkers satisfy
\begin{equation}
0=\mathcal{E}_\alpha( \Gamma^{\alpha}_\infty)>\mathcal{E}_\alpha( \Gamma^{\alpha}_{k_0})>\cdots>\mathcal{E}_\alpha( \Gamma^{\alpha}_{3}),
\end{equation}
where $k_0=\lceil\sqrt{1+1/\alpha}\rceil-1\geq 3$.  Here, $\lceil x\rceil$ denotes the smallest integer that is greater than or equal to $x$.
\end{theorem}

To begin with, we calculate the entropies of shrinkers by using their support functions.
\begin{proposition}\label{prop:ent-cmp-h}
The shrinker $\Gamma_k^\alpha$ with the support function $h$ satisfies
\begin{align}\label{ent-shrinker}
\mathcal{E}_\alpha( \Gamma^{\alpha}_k)=-\frac{1+\alpha}{2(1-\alpha)}\log\left(\fint_{\mathbb{S}^1}  h^{1-1/\alpha}(\theta)d\theta\right).
\end{align}
\end{proposition}
\begin{proof}
We recall from Proposition \ref{prop:ent-pt} the unique entropy point $z_e$ of $ \Omega^\alpha_k$, where $ \Gamma_k^\alpha=\partial  \Omega_k^\alpha$. If $z_e\neq 0$, then the rotation by $2\pi/k$ yields another entropy point due to the symmetry of $\Gamma_k^\alpha$. This contradicts the uniqueness of the entropy point and thus we have $z_e=0$. Hence, \eqref{ent-shrinker} follows from \eqref{def:ent-2} and the following identity
\begin{align}
    \mathcal{A}(\Omega^\alpha_k)=\frac{1}{2}\int_{\mathbb{S}^1}h(h+h_{\theta\theta})d\theta=\frac{1}{2}\int_{\mathbb{S}^1}h^{1-\frac{1}{\alpha}}(\theta)d\theta.
\end{align}
 \end{proof}

Next, we will recall the period function $\Theta(\alpha,r)$ from \cite[Section 2]{andrews2003classification}, and we will also recall a family of solutions $U(\alpha,r,\theta)$ to the shrinker equation \eqref{eq:h2} from the proof of Lemma 7.2 of \cite{andrews2003classification}. In order to explain $\Theta(\alpha,r)$ and $U(\alpha,r,\theta)$, we begin by considering the solution $u:[0,+\infty)\to \mathbb{R}_+$ to the initial value problem
\begin{align}
&u''+u=u^{-\frac{1}{\alpha}}, && u'(0)=0, && u(0)=u_+>1.
\end{align}
Since $u''(0)=u_+^{-\frac{1}{\alpha}}-u_+<0$, there exists some $\Theta $ such that $u'<0$ in $(0,\Theta)$ and $u'(\Theta)=0$. We denote $u_-=u(\Theta)$ and consider the ratio $r=\frac{u_+}{u_-}>1$. Then, by (2.4) and (2.6) in \cite{andrews2003classification}, $u_\pm$ and $r$ have the following relation for $\alpha\neq 1$.
\begin{equation}\label{eq:E(r)-def}
u_\pm^2+\tfrac{2\alpha}{1-\alpha}u_\pm^{1-\frac{1}{\alpha}}=E(\alpha, r):=\left(\tfrac{2\alpha(1-r^{\frac{\alpha-1}{\alpha}})}{(1-\alpha)(r^2-1)}\right)^{\frac{2\alpha}{\alpha+1}}\left(\tfrac{r^2-r^{\frac{\alpha-1}{\alpha}}}{1-r^{\frac{\alpha-1}{\alpha}}}\right).
\end{equation}
As mentioned in \cite{andrews2003classification}, $E(r)$ is an increasing function of $r$ for $r\geq 1$. In addition, we can observe that $F(u_+):=u_+^2+\tfrac{2\alpha}{1-\alpha}u_+^{1-\frac{1}{\alpha}}$ is also an increasing function of $u_+$ for $u_+>1$. Hence, we can consider $u_+>1$ as a function of $(\alpha, r)$, and we can parametrize $\Theta$ by $(\alpha,r)$ instead of $(\alpha,u_+)$. Indeed, $\Theta:(0,1)\times(1,\infty)\to \mathbb{R}$ satisfies
\begin{equation}
\Theta(\alpha,r)=\int_1^r\left(\tfrac{r^2-r^{\frac{\alpha-1}{\alpha}}}{1-r^{\frac{\alpha-1}{\alpha}}}-x^2-\tfrac{r^2-1}{1-r^{\frac{\alpha-1}{\alpha}}}x^{\frac{\alpha-1}{\alpha}} \right)^{-\frac{1}{2}}dx.
\end{equation}
Moreover, we denote by $U(\alpha,r,\theta)$ the solution to the equation
\begin{align}\label{eq:U-ode}
U_{\theta\theta}+U=U^{-\frac{1}{\alpha}}
\end{align}
satisfying $U_{\theta}(\alpha,r, 0)=0$ and  $U(\alpha,r, 0)=u_+(\alpha, r)>0$.
Then, we have  
\begin{equation}\label{eq:U-Neumann}
U_{\theta}(\alpha, r, \Theta(\alpha, r))=U_{\theta}(\alpha,r, 0) =0.
\end{equation}

In this appendix, we will fix $\alpha\in (0,\frac{1}{3})$ so that we can denote $\Theta(\alpha,r)$, $U(\alpha,r,\theta)$ and $E(\alpha, r)$ by $\Theta(r)$, $U(r,\theta)$ and $E(r)$, respectively.

Now, we define
\begin{equation}
\eta(\theta)=\tfrac{\partial}{\partial r}  U( r,\theta).
\end{equation}
Then, \eqref{eq:U-ode} implies
\begin{equation}\label{eq:eta-eq}
\eta_{\theta\theta}+\eta=-\tfrac{1}{\alpha}U^{-1-\frac{1}{\alpha}}\eta,
\end{equation}
and also differentiating \eqref{eq:U-Neumann} in $r$ yields
\begin{align}\label{ap:eta-bdry}
\eta_{\theta}(\Theta(r))+U_{\theta\theta}(r,\Theta(r)) \Theta_r(r)=\eta_\theta(0)=0.
\end{align}

Finally, we define $f:(1,\infty)\to \mathbb{R}$ by
\begin{align}\label{eq:f-entropy-def}
f(r)=\frac{1}{\Theta(r)}\int_0^{\Theta(r)}U(r,\theta)^{1-1/\alpha}d\theta.
\end{align}
Notice that $U(r,\Theta(r))\leq U(r,\theta)\leq U(r,0)$ and $r=U(r,0)/U(r,\Theta(r))$. Hence,
\begin{equation}\label{eq:f-limit-1}
\lim_{r\to 1^+}f(r)=1.
\end{equation}

\begin{lemma}\label{lem:f-monotone}
 Given $\alpha\in(0,\frac{1}{3})$, $f(r)$ is an increasing function.
\end{lemma}

\begin{proof}Taking the derivative of $f$ yields
\begin{align}
\frac{df(r)}{dr}=\Theta^{-1}\left[U(r,\Theta)^{1-1/\alpha}\frac{d\Theta}{dr} +\int_0^{\Theta}\left(1-\tfrac{1}{\alpha}\right)U^{-1/\alpha}\eta d\theta\right]-\Theta^{-1}\frac{d\Theta}{dr} f.
\end{align}
To calculate the integral on the RHS, we combine \eqref{eq:U-ode} and \eqref{eq:eta-eq} as follows.
\begin{align}
(1+\tfrac{1}{\alpha})\int_0^{\Theta}U^{-\frac{1}{\alpha} }\eta \,d\theta= \int_0^\Theta (U_{\theta\theta}+U)\eta-(\eta_{\theta\theta}+\eta)U\,d\theta
=U_\theta\eta-U\eta_{\theta}\big|_0^{\Theta}.
\end{align}
Hence, the Neumann conditions \eqref{eq:U-Neumann} and \eqref{ap:eta-bdry} imply
\begin{align}
(1+\tfrac{1}{\alpha})\int_0^{\Theta}U^{-1/\alpha}\eta d\theta=U(r,\Theta) U_{\theta\theta}(r,\Theta)\frac{d\Theta}{dr}.
\end{align}
Therefore, combining with the first equation yields
\begin{align}
f'=\Theta^{-1}\Theta'\left[U(r,\Theta)^{1-1/\alpha}+\tfrac{\alpha-1}{\alpha+1}U_{\theta\theta}(r,\Theta)U(r,\Theta)-f\right].
\end{align}
Using $U_{\theta\theta}=U^{-1/\alpha}-U$, we obtain the following identity
\begin{align}
U^{1-\frac{1}{\alpha} }+\tfrac{\alpha-1}{\alpha+1}U_{\theta\theta}U=  \tfrac{\alpha -1 }{\alpha+1}\left[\tfrac{2\alpha}{\alpha-1}U^{1-\frac{1}{\alpha}}-U^2\right].
\end{align}
Hence, by the definition \eqref{eq:E(r)-def} of $E(r)$, we have 
\begin{equation}\label{eq:f_r-entropy}
f' =\Theta^{-1} \Theta' \left[\tfrac{1-\alpha}{1+\alpha}E -f\right]=\tfrac{1-\alpha}{1+\alpha}\Theta^{-1} \Theta' \left[E -\tfrac{1+\alpha}{1-\alpha}f\right].
\end{equation}

On the other hand, by using \eqref{eq:U-ode} we can obtain 
\begin{equation}
 \tfrac{ \partial}{\partial \theta}(U^2+\tfrac{2\alpha}{1-\alpha}U^{1-\frac{1}{\alpha}})=2( U-U^{-\frac{1}{\alpha}}) U_\theta=-2U_{\theta\theta}U_\theta=-\tfrac{\partial}{\partial \theta} U_\theta^2.
\end{equation}
Therefore,
\begin{equation}
E(r)=U^2_\theta(r,\theta)+U^2(r,\theta)+\tfrac{2\alpha}{ 1-\alpha}U^{1-\frac{1}{\alpha}}(r,\theta),
\end{equation}
holds for $\theta\in [0,\Theta(r)]$, and thus we have
\begin{equation}
E(r)=\frac{1}{\Theta(r)}\int_0^{\Theta(r)}U^2_\theta(r,\theta)+U^2(r,\theta)+\tfrac{2\alpha}{1- \alpha}U^{1-\frac{1}{\alpha}}(r,\theta)\, d\theta.
\end{equation}
Hence, combining with \eqref{eq:f-entropy-def} and \eqref{eq:U-ode} leads to
\begin{equation}
E-\frac{1+\alpha}{1-\alpha}f=\frac{1}{\Theta }\int_0^{\Theta }U^2_\theta +U^2-U^{1-\frac{1}{\alpha}}\, d\theta=\frac{1}{\Theta}\int_0^\Theta \,U_\theta^2-U U_{\theta\theta}\,d\theta.
\end{equation}
Therefore, the Neumann condition \eqref{eq:U-Neumann} implies
\begin{equation}
E-\frac{1+\alpha}{1-\alpha}f=\frac{2}{\Theta }\int_0^{\Theta }U^2_\theta \, d\theta.
\end{equation}
Thus, \eqref{eq:f_r-entropy} yields
\begin{equation}
f'=\frac{2(1-\alpha)}{1+\alpha}\Theta^{-2} \Theta'\int_0^{\Theta }U^2_\theta \, d\theta.
\end{equation}
Since ${\frac{d\Theta}{dr}}>0$ holds for $\alpha\in(0,\frac{1}{3})$ by \cite[Corollary 5.6]{andrews2003classification}, we have the desired result.
\end{proof}
\bigskip

\begin{proof}[Proof of Theorem \ref{thm:ent-cmp}]
By \cite[Theorem 3.1]{andrews2003classification} and \cite[Corollary 5.6]{andrews2003classification}, for each $3\leq k \leq k_0 $ there exists $r_k$ satisfying $\Theta(r_k)=\pi/k$. Moreover, \cite[Corollary 5.6]{andrews2003classification} implies
\begin{equation}
1<r_{k_{0}}<\cdots<r_3<\infty.
\end{equation}
Hence, \eqref{eq:f-limit-1} and \eqref{lem:f-monotone} yield
\begin{equation}
0<\log f(r_{k_0})<\cdots<\log f(r_3).
\end{equation}

Now, we recall the definition of $\Gamma_k^\alpha$ that the support function $h(\theta)$ of $\Gamma_k^\alpha$ attains its maximum at $\theta=0$. Then, we have $h(\theta)=U(r_k,\theta)$. Therefore, Proposition \ref{prop:ent-cmp-h} and the $k$-fold symmetry of $ \Gamma_k^\alpha$ lead to
\begin{equation}
\mathcal{E}_\alpha( \Gamma^{\alpha}_k)=-\frac{1+\alpha }{2(1-\alpha)}\log f(r_k).
\end{equation}
Hence, the inequality above says 
\begin{equation}
\mathcal{E}_\alpha( \Gamma^{\alpha}_3)<\cdots<\mathcal{E}_\alpha( \Gamma^{\alpha}_{k_0})<0.
\end{equation}
Therefore, we can complete the proof by observing  $\mathcal{E}_\alpha( \Gamma^{\alpha}_\infty)=0$, because the unit circle $ \Gamma^{\alpha}_\infty$ has the support function $ h\equiv 1$.
\end{proof}

\small

\bibliographystyle{plainnat}
\bibliography{CSF-ref}

\begin{thebibliography}{40}
\providecommand{\natexlab}[1]{#1}
\providecommand{\url}[1]{\texttt{#1}}
\expandafter\ifx\csname urlstyle\endcsname\relax
  \providecommand{\doi}[1]{doi: #1}\else
  \providecommand{\doi}{doi: \begingroup \urlstyle{rm}\Url}\fi

\bibitem[Allard and Almgren(1981)]{allard1981radial}
William~K Allard and Frederick~J Almgren, Jr.
\newblock On the radial behavior of minimal surfaces and the uniqueness of
  their tangent cones.
\newblock \emph{Annals of Mathematics}, pages 215--265, 1981.

\bibitem[Andrews(1996)]{andrews1996contraction}
Ben Andrews.
\newblock Contraction of convex hypersurfaces by their affine normal.
\newblock \emph{Journal of Differential Geometry}, 43\penalty0 (2):\penalty0
  207--230, 1996.

\bibitem[Andrews(1997)]{andrews1997monotone}
Ben Andrews.
\newblock Monotone quantities and unique limits for evolving convex
  hypersurfaces.
\newblock \emph{International Mathematics Research Notices}, \penalty0
  (20):\penalty0 1001--1031, 1997.

\bibitem[Andrews(1998)]{andrews1998evolving}
Ben Andrews.
\newblock Evolving convex curves.
\newblock \emph{Calculus of Variations and Partial Differential Equations},
  7\penalty0 (4):\penalty0 315--371, 1998.

\bibitem[Andrews(1999)]{andrews1999gauss}
Ben Andrews.
\newblock {Gauss} curvature flow: the fate of the rolling stones.
\newblock \emph{Inventiones mathematicae}, 138\penalty0 (1):\penalty0 151--161,
  1999.

\bibitem[Andrews(2000)]{andrews2000motion}
Ben Andrews.
\newblock Motion of hypersurfaces by {Gauss} curvature.
\newblock \emph{Pacific journal of Mathematics}, 195\penalty0 (1):\penalty0
  1--34, 2000.

\bibitem[Andrews(2003)]{andrews2003classification}
Ben Andrews.
\newblock Classification of limiting shapes for isotropic curve flows.
\newblock \emph{Journal of the American mathematical society}, 16\penalty0
  (2):\penalty0 443--459, 2003.

\bibitem[Andrews et~al.(2013)Andrews, McCoy, and Zheng]{andrews2013contracting}
Ben Andrews, James McCoy, and Yu~Zheng.
\newblock Contracting convex hypersurfaces by curvature.
\newblock \emph{Calculus of variations and partial differential equations},
  47\penalty0 (3):\penalty0 611--665, 2013.

\bibitem[Andrews et~al.(2016)Andrews, Guan, and Ni]{andrews2016flow}
Ben Andrews, Pengfei Guan, and Lei Ni.
\newblock Flow by powers of the {Gauss} curvature.
\newblock \emph{Advances in Mathematics}, 299:\penalty0 174--201, 2016.

\bibitem[Angenent et~al.(2019)Angenent, Daskalopoulos, and
  Sesum]{angenent2019unique}
Sigurd Angenent, Panagiota Daskalopoulos, and Natasa Sesum.
\newblock {Unique asymptotics of ancient convex mean curvature flow solutions}.
\newblock \emph{Journal of Differential Geometry}, 111\penalty0 (3):\penalty0
  381--455, 2019.

\bibitem[Angenent et~al.(2020{\natexlab{a}})Angenent, Brendle, Daskalopoulos,
  and Sesum]{angenent2019unique-B}
Sigurd Angenent, Simon Brendle, Panagiota Daskalopoulos, and Natasa Sesum.
\newblock Unique {A}symptotics of {C}ompact {A}ncient {S}olutions to
  {T}hree-{D}imensional {R}icci {F}low.
\newblock \emph{Communications on Pure and Applied Mathematics},
  2020{\natexlab{a}}.

\bibitem[Angenent(1992)]{angenent1992shrinking}
Sigurd~B Angenent.
\newblock Shrinking doughnuts.
\newblock In \emph{Nonlinear diffusion equations and their equilibrium states,
  3}, pages 21--38. Springer, 1992.

\bibitem[Angenent et~al.(2020{\natexlab{b}})Angenent, Daskalopoulos, and
  Sesum]{angenent2020uniqueness}
Sigurd~B Angenent, Panagiota Daskalopoulos, and Natasa Sesum.
\newblock Uniqueness of two-convex closed ancient solutions to the mean
  curvature flow.
\newblock \emph{Annals of mathematics}, pages 353--436, 2020{\natexlab{b}}.

\bibitem[Bourni et~al.(2020)Bourni, Clutterbuck, Nguyen, Stancu, Wei, and
  Wheeler]{bourni2020ancient}
Theodora Bourni, Julie Clutterbuck, Xuan~Hien Nguyen, Alina Stancu, Guofang
  Wei, and Valentina-Mira Wheeler.
\newblock Ancient solutions for flow by powers of the curvature in
  $\mathbb{R}^2$.
\newblock \emph{arXiv:2005.07642}, 2020.

\bibitem[Brendle and Choi(2019)]{brendle2019uniqueness}
Simon Brendle and Kyeongsu Choi.
\newblock Uniqueness of convex ancient solutions to mean curvature flow in
  $\mathbb{R}^3$.
\newblock \emph{Inventiones mathematicae}, 217\penalty0 (1):\penalty0 35--76,
  2019.

\bibitem[Brendle and Choi(2021)]{brendle2018uniqueness}
Simon Brendle and Kyeongsu Choi.
\newblock Uniqueness of convex ancient solutions to mean curvature flow in
  higher dimensions.
\newblock \emph{Geometry \& Topology}, 25\penalty0 (5):\penalty0 2195--2234,
  2021.

\bibitem[Brendle et~al.(2017)Brendle, Choi, and
  Daskalopoulos]{brendle2017asymptotic}
Simon Brendle, Kyeongsu Choi, and Panagiota Daskalopoulos.
\newblock Asymptotic behavior of flows by powers of the {Gaussian} curvature.
\newblock \emph{Acta Mathematica}, 219\penalty0 (1):\penalty0 1--16, 2017.

\bibitem[Chen(2015)]{chen2015classifying}
Shibing Chen.
\newblock Classifying convex compact ancient solutions to the affine curve
  shortening flow.
\newblock \emph{The Journal of Geometric Analysis}, 25\penalty0 (2):\penalty0
  1075--1079, 2015.

\bibitem[Chodosh et~al.(2020)Chodosh, Choi, Mantoulidis, and
  Schulze]{chodosh2020mean}
Otis Chodosh, Kyeongsu Choi, Christos Mantoulidis, and Felix Schulze.
\newblock Mean curvature flow with generic initial data.
\newblock \emph{arXiv:2003.14344}, 2020.

\bibitem[Choi et~al.(2020)Choi, Choi, and Daskalopoulos]{choi2020uniqueness}
Beomjun Choi, Kyeongsu Choi, and Panagiota Daskalopoulos.
\newblock {Uniqueness of ancient solutions to Gauss curvature flow asymptotic
  to a cylinder}.
\newblock \emph{arXiv:2004.11754}, 2020.

\bibitem[Choi and Sun(2022)]{choisun}
Keyongsu Choi and Liming Sun.
\newblock Ancient finite entropy flows by powers of curvature in
  $\mathbb{R}^2$.
\newblock \emph{Nonlinear Analysis}, 216:\penalty0 112673, 2022.

\bibitem[Choi and Daskalopoulos(2016)]{choi-daskalopoulos16}
Kyeongsu Choi and Panagiota Daskalopoulos.
\newblock Uniqueness of closed self-similar solutions to the {Gauss} curvature
  flow.
\newblock \emph{arXiv:1609.05487}, 2016.

\bibitem[Choi and Mantoulidis(to appear)]{choi2019ancient}
Kyeongsu Choi and Christos Mantoulidis.
\newblock Ancient gradient flows of elliptic functionals.
\newblock \emph{American Journal of Mathematics}, to appear.

\bibitem[Choi et~al.(2021)Choi, Daskalopoulos, and Lee]{CDLflat}
Kyeongsu Choi, Panagiota Daskalopoulos, and Ki-Ahm Lee.
\newblock Translating solutions to the gauss curvature flow with flat sides.
\newblock \emph{Analysis \& PDE}, 14\penalty0 (2):\penalty0 595--616, 2021.

\bibitem[Choi et~al.(to appear{\natexlab{a}})Choi, Haslhofer, and
  Hershkovits]{choi2018ancient}
Kyeongsu Choi, Robert Haslhofer, and Or~Hershkovits.
\newblock Ancient low entropy flows, mean convex neighborhoods, and uniqueness.
\newblock \emph{Acta Mathematica}, to appear{\natexlab{a}}.

\bibitem[Choi et~al.(to appear{\natexlab{b}})Choi, Haslhofer, Hershkovits, and
  White]{choi2019ancient-white}
Kyeongsu Choi, Robert Haslhofer, Or~Hershkovits, and Brian White.
\newblock Ancient asymptotically cylindrical flows and applications.
\newblock \emph{Inventiones mathematicae}, to appear{\natexlab{b}}.

\bibitem[Daskalopoulos and Hamilton(1999)]{DH99GCFflat}
Panagiota Daskalopoulos and Richard Hamilton.
\newblock The free boundary in the {G}auss curvature flow with flat sides.
\newblock \emph{Journal f\"{u}r die reine und angewandte Mathematik},
  510:\penalty0 187--227, 1999.

\bibitem[Daskalopoulos and Lee(2004)]{DL04GCFflat}
Panagiota Daskalopoulos and Ki-Ahm Lee.
\newblock Worn stones with flat sides all time regularity of the interface.
\newblock \emph{Inventiones mathematicae}, 156\penalty0 (3):\penalty0 445--493,
  2004.

\bibitem[Daskalopoulos et~al.(2010)Daskalopoulos, Hamilton, and
  Sesum]{daskalopoulos2010}
Panagiota Daskalopoulos, Richard Hamilton, and Natasa Sesum.
\newblock Classification of compact ancient solutions to the curve shortening
  flow.
\newblock \emph{Journal of Differential Geometry}, 84\penalty0 (3):\penalty0
  455--464, 03 2010.

\bibitem[Gage and Hamilton(1986)]{gage1986heat}
Michael Gage and Richard Hamilton.
\newblock The heat equation shrinking convex plane curves.
\newblock \emph{Journal of Differential Geometry}, 23\penalty0 (1):\penalty0
  69--96, 1986.

\bibitem[Guan and Ni(2017)]{guan2017entropy}
Pengfei Guan and Lei Ni.
\newblock Entropy and a convergence theorem for {Gauss} curvature flow in high
  dimension.
\newblock \emph{Journal of the European Mathematical Society}, 19\penalty0
  (12):\penalty0 3735--3761, 2017.

\bibitem[Hamilton(1994)]{H93GCFflat}
Richard Hamilton.
\newblock Worn stones with flat sides.
\newblock In \emph{A Tribute to Ilya Bakelman}, pages 69--78. Texas A \& M
  University, Department of Mathematics, 1994.

\bibitem[Ivaki(2016)]{ivaki2016classification}
Mohammad~N Ivaki.
\newblock Classification of compact convex ancient solutions of the planar
  affine normal flow.
\newblock \emph{The Journal of Geometric Analysis}, 26\penalty0 (1):\penalty0
  663--671, 2016.

\bibitem[Kim et~al.(2013)Kim, Lee, and Rhee]{KLRflat}
Lami Kim, Ki-Ahm Lee, and Eunjai Rhee.
\newblock $\alpha$-{G}auss curvature flows with flat sides.
\newblock \emph{Journal of Differential Equations}, 254\penalty0 (3):\penalty0
  1172--1192, 2013.

\bibitem[Loftin and Tsui(2008)]{loftin2008ancient}
John Loftin and Mao-Pei Tsui.
\newblock Ancient solutions of the affine normal flow.
\newblock \emph{Journal of Differential Geometry}, 78\penalty0 (1):\penalty0
  113--162, 2008.

\bibitem[Merle and Zaag(1998)]{merle1998optimal}
Frank Merle and Hatem Zaag.
\newblock Optimal estimates for blowup rate and behavior for nonlinear heat
  equations.
\newblock \emph{Communications on Pure and Applied Mathematics}, 51\penalty0
  (2):\penalty0 139--196, 1998.

\bibitem[Sapiro and Tannenbaum(1994)]{sapiro1994affine}
Guillermo Sapiro and Allen Tannenbaum.
\newblock On affine plane curve evolution.
\newblock \emph{{Journal of Functional Analysis}}, 119\penalty0 (1):\penalty0
  79--120, 1994.

\bibitem[Schneider(2014)]{schneider2014convex}
Rolf Schneider.
\newblock \emph{Convex bodies: the Brunn--Minkowski theory}.
\newblock Cambridge university press, 2014.

\bibitem[Urbas(1998)]{U98GCFsoliton}
John Urbas.
\newblock Complete noncompact self-similar solutions of {G}auss curvature flows
  {I}. positive powers.
\newblock \emph{Mathematische Annalen}, 311\penalty0 (2):\penalty0 251--274,
  1998.

\bibitem[Wang(1992)]{wang1992regularity}
Lihe Wang.
\newblock On the regularity theory of fully nonlinear parabolic equations: I.
\newblock \emph{Communications on Pure and Applied Mathematics}, 45\penalty0
  (1):\penalty0 27–76, 1992.

\end{thebibliography}

\end{document}